\def\frak{\mathfrak}
\def\Bbb{\mathbb}
\def\Cal{\mathcal}
\let\phi\varphi
\newcommand{\x}{\times}
\renewcommand{\o}{\circ}
\newcommand{\al}{\alpha}
\newcommand{\be}{\beta}
\newcommand{\ga}{\gamma}
\newcommand{\ep}{\epsilon}
\newcommand{\ka}{\kappa}
\newcommand{\la}{\lambda}
\newcommand{\ph}{\phi}
\newcommand{\ps}{\psi}
\renewcommand{\th}{\theta}
\newcommand{\si}{\sigma}
\newcommand{\ze}{\zeta}
\newcommand{\Ga}{\Gamma}
\newcommand{\La}{\Lambda}
\newcommand{\Ph}{\Phi}
\newcommand{\Ps}{\Psi}
\newcommand{\Om}{\Omega}
\newcommand{\Ups}{\Upsilon}
\def\Rho{\mbox{\textsf{P}}}
\newcommand{\barm}{\overline{M}}
\newcommand{\id}{\operatorname{id}}
\newcommand{\Fl}{\operatorname{Fl}}
\newcommand{\End}{\operatorname{End}}
\newcommand{\rpl}                         
{\mbox{$
\begin{picture}(12.7,8)(-.5,-1)
\put(0,0.2){$+$}
\put(4.2,2.8){\oval(8,8)[r]}
\end{picture}$}}
\newcounter{theorem}
\newtheorem{thm}[theorem]{Theorem}
\newtheorem*{thm*}{Theorem \thesubsection}
\newtheorem{lemma}[theorem]{Lemma}
\newtheorem{prop}[theorem]{Proposition}
\newtheorem*{lemma*}{Lemma \thesubsection}
\newtheorem*{prop*}{Proposition \thesubsection}
\newtheorem*{cor*}{Corollary \thesubsection}
\theoremstyle{definition}
\newtheorem{definition}[theorem]{Definition}
\newtheorem*{definition*}{Definition \thesubsection}
\newtheorem*{example*}{Example \thesubsection}
\theoremstyle{remark}
\newtheorem*{remark*}{Remark \thesubsection}
\def\sideremark#1{\ifvmode\leavevmode\fi\vadjust{\vbox to0pt{\vss
 \hbox to 0pt{\hskip\hsize\hskip1em
 \vbox{\hsize3cm\tiny\raggedright\pretolerance10000
  \noindent #1\hfill}\hss}\vbox to8pt{\vfil}\vss}}}%
\begin{document}

\title{Projective compactness\\ and conformal boundaries}

\author{Andreas \v Cap and A.\ Rod Gover}

\address{A.\v C.: Faculty of Mathematics\\
University of Vienna\\
Oskar--Morgenstern--Platz 1\\
1090 Wien\\
Austria\\
A.R.G.:Department of Mathematics\\
  The University of Auckland\\
  Private Bag 92019\\
  Auckland 1142\\
  New Zealand;\\
Mathematical Sciences Institute\\
Australian National University \\ ACT 0200, Australia} 
\email{Andreas.Cap@univie.ac.at}
\email{r.gover@auckland.ac.nz}

\begin{abstract}
Let $\overline{M}$ be a smooth manifold with boundary $\partial M$ and
interior $M$.  Consider an affine connection $\nabla$ on $M$ for which
the boundary is at infinity. Then $\nabla$ is projectively compact of
order $\alpha$ if the projective structure defined by $\nabla$
smoothly extends to all of $\overline{M}$ in a specific way that
depends on no particular choice of boundary defining function. Via the
Levi--Civita connection, this concept applies to pseudo--Riemannian
metrics on $M$. We study the relation between interior geometry and
the possibilities for compactification, and then develop the tools
that describe the induced geometry on the boundary.

We prove that a pseudo--Riemannian metric on $M$ which
is projectively compact of order two admits a certain asymptotic
form. This form was known to be sufficient for projective
compactness, so the result establishes that it provides an equivalent
characterization.

From a projectively compact connection on $M$, one obtains a projective
structure on $\overline{M}$, which induces a conformal class of
(possibly degenerate) bundle metrics on the tangent bundle to the
hypersurface $\partial M$. Using the asymptotic form, we prove that in
the case of metrics, which are projectively compact of order two, this
boundary structure is always non--degenerate. We also prove that in
this case the metric is necessarily asymptotically Einstein, in a
natural sense.

Finally, a non--degenerate boundary geometry gives rise to a
(conformal) standard tractor bundle endowed with a canonical linear
connection, and we explicitly describe these in terms of the projective
data of the interior geometry.
\end{abstract}

\subjclass{MSC2010: Primary 53A20, 53B21, 53B10; 
Secondary 35N10, 53A30,58J60}

\maketitle

\pagestyle{myheadings} \markboth{\v Cap, Gover}{Projective
  compactness} 

\thanks{Both authors gratefully acknowledge support from the Royal
  Society of New Zealand via Marsden Grant 13-UOA-018; A\v C
  gratefully acknowledges support by projects P23244-N13 and
  P27072-N25 of the Austrian Science Fund (FWF) and also the
  hospitality of the University of Auckland. }

\section{Introduction}\label{1}
Consider a smooth manifold $\barm$ with boundary $\partial M$ and
interior $M$. The study of geometric structures on $\partial M$
induced by complete Riemannian (or pseudo--Riemannian) metrics on $M$,
and of the relation between asymptotic data on $M$ and data on
$\partial M$, has a long history that includes interesting applications in
mathematics and physics, e.g. \cite{CY,GrL,HS}. A model case for this
situation is provided by conformally compact metrics on $M$,
i.e.~complete metrics for which an appropriate conformal rescaling
extends to the boundary. Such a metric gives rise to a well defined
conformal class of metrics on $\partial M$, which then is referred to
as the conformal infinity of the interior metric. Originating and
flourishing in general relativity (see
e.g.~\cite{Chrusciel,Fr,Friedrich,P-orig,Penrose125}), this concept
has also found important applications in geometric scattering theory
(\cite{GrZ,Ma-hodge,Melrose}) and the conjectural AdS--CFT
correspondence in physics (\cite{AdSCFTreview,deHaro}). If one in
addition requires the conformally compact metric on $M$ to be negative
Einstein, one arrives at the notion of a Poincar\'e--Einstein
metric. Realizing a given conformal class on a manifold formally as
the conformal infinity of a Poincar\'e--Einstein metric is closely
related to the Fefferman--Graham conformal ambient metric construction
\cite{FG1,FG2}, and so provides a central tool for generating conformal
invariants.

In fact the ambient metric construction is most
directly related to projective differential geometry.  As part of a
discussion of this point in \cite[Chapter 4]{FG2} Fefferman and Graham
present a certain asymptotic form for pseudo--Riemannian metrics,
which they call projectively compact, and they observe that
appropriate projective modifications of the Levi--Civita connections
of these admit smooth extensions to the boundary.  They did not go
further into the relations to projective differential geometry,
however.  On the other hand, in the classical and visionary articles
\cite{SH1,SH2}, Schouten and Haantjes develop a construction
essentially equivalent to the ambient metric, but based on projective
differential geometry.

In another case of the implicit use of projective geometry, a replacement of
conformal compactification by projective compactification is the
geometric move underlying the significant advances in the microlocal
analysis of asymptotically hyperbolic and de Sitter spaces
recently developed by Vasy in \cite{Vforms,Vasy}.

Projective compactification is potentially extremely powerful.  For
example, many natural equations in pseudo-Riemannian geometry are
projectively invariant, and hence their solutions will be well behaved
toward infinity in the case of projectively compact metrics. 
In contrast to this, such solutions will not be well behaved on, for
example, conformally compact manifolds.  Indeed we exploit such
properties for certan equations in the current work. However in this
article the focus is on further development of the foundational theory
of compactification. In particular we establish fundamental results
linking the asymptotics of the interior geometry to the different
possibilities for projective compactification.

Guided by examples arising from reductions of projective holonomy (see
\cite{ageom,hol-red}), a conceptual approach to projective compactness was
developed in our article \cite{Proj-comp}. The basic idea there was to
start with a linear connection $\nabla$ on $M$ and use local defining
functions for the boundary to define projective modifications of
$\nabla$ which are then required to admit a smooth extension to the
boundary. Applying this to the Levi--Civita connection, the concept
is automatically defined for pseudo--Riemannian metrics. It
turns out that it is natural to involve a real parameter $\al>0$,
called the order of projective compactness. For a local defining
function $\rho$ for the boundary (see Section \ref{2.1} for detailed
definitions) projective compactness of $\nabla$ of order $\al$ then is
the requirement that, if two vector fields $\xi$ and $\eta$ are smooth
up to the boundary, then 
$$
\hat\nabla_\xi\eta=\nabla_\xi\eta+\tfrac1{\al\rho}d\rho(\xi)\eta+
\tfrac1{\al\rho}d\rho(\eta)\xi 
$$ 
admits a smooth extension to the boundary. In the case of
connections preserving a volume density, the order $\alpha$ measures
the growth of that volume density towards the boundary. The main cases of
interest are $\al=1$ and $\al=2$.

In case that $2/\al$ is an integer, the article \cite{Proj-comp}
describes an asymptotic form of a pseudo--Riemannian metric on $M$
which is sufficient for projective compactness of order $\al$. 
In the case $\al=2$, which will be the case of main interest for
this article, this aysmptotic form is given by 
$$
g=\tfrac{h}{\rho}+\tfrac{Cd\rho^2}{\rho^2}.
$$ 
Here $\rho$ is a local defining function for the boundary, $C$ is a
nowhere vanishing function which is smooth up to the boundary and
asymptotically constant in a certain sense, and $h$ is a symmetric
$\binom02$--tensor field, which is smooth up to the boundary and whose
boundary value is non--degenerate in directions tangent to the
boundary.  The metrics introduced by Fefferman and Graham, as
described above, are the class of these with $C=1$.

In this article our first  aim is to treat the more difficult problem of
establishing necessary conditions for projective compactness (of
order 2).
Certain reductions of projective holonomy give rise to examples of
projectively compact connections, and for $\al=2$, the resulting
connections are exactly the Levi--Civita connections of
non--Ricci--flat Einstein metrics. In this case, we have proved in
\cite{Proj-comp} that an asymptotic form as above is always available,
with the function $C$ being a constant related to the Einstein
constant. One of the main results of this article is Theorem
\ref{thm2.5} which shows that the asymptotic form (with constant $C$)
is available for \textit{any} pseudo--Riemannian metric that is
projectively compact of order two. Hence the asymptotic form can be
used as an equivalent definition of projective compactness in this
case.

This result is proved in Section \ref{2}. The main ingredient for the
proof is that for any projectively compact connection $\nabla$ on $M$
the projective structure defined by $\nabla$ admits a smooth extension
to $\barm$. Hence the tools of projective differential geometry, in
particular tractor bundles and tractor connections, all admit smooth
extensions to $\barm$. These can be used to prove that solutions to
certain projectively invariant differential equations automatically
extend smoothly to all of $\barm$. These smooth extension are the main
ingredient in our analysis. In particular, we obtain that the scalar
curvature of $g$ admits a smooth extension to all of $\barm$ and is
asymptotic to a non--zero constant. This nicely complements our result
from \cite{Scalar} that extension of the projective structure of the
Levi--Civita connection of a pseudo--Riemannian metric together with
this type of asymptotic behavior of the scalar curvature forces the
metric to be projectively compact of order two. Thus for
pseudo-Riemmanian metrics with a projective structure that extends to
a boundary at infinity, projective compactness of order 2 (which may
be interpeted as a certain volume growth) is equivalent to the scalar
curvature having a non-zero limit at the boundary. This limit is then
necessarily constant along the boundary. 

The fact that the projective structure defined by a projectively
compact connection or metric extends to all of $\barm$ also gives rise
to an induced geometric structure on the boundary $\partial M$. As a
hypersurface in a projective manifold, $\partial M$ inhertis a
symmetric $\binom02$--tensor field, which is well defined up to
conformal rescaling. This ``projective second fundamental form'' is
the main object of study in Section \ref{3} of this article.

We first show that the projective second fundamental form can be
described in terms of the asymptotic behavior of the Schouten--tensor
(or equivalently the Ricci-tensor), see Proposition
\ref{prop3.1}. This leads to results on the asymptotic behavior of the
curvature of a projectively compact affine connection, as given in
Proposition \ref{prop3.3}. On the other hand, for metrics which are
projectively compact of order two, one can analyse the boundary
geometry in terms of the asymptotic form provided by Theorem
\ref{thm2.5}. In this case we prove, in Proposition \ref{prop3.2},
that the projective second fundamental form is non--degenerate and
thus induces a pseudo--Riemannian conformal structure on $\partial
M$. Together these results lead to a finer description of the
curvature of such a metric. In particular, in Theorem \ref{thm3.3} we
deduce that such a metric satisfies an asymptotic version of the
Einstein equation. Proposition \ref{prop3.2} also proves that for
metrics with the assymptotic form sufficent for projective compactness
of order $\al<2$, the boundary is necessarily totally geodesic; so the
implications of projective compactness for the extrinsic geometry of
the boundary change sharply at $\al=2$.

In Section \ref{4}, we continue the study of the boundary geometry
induced by a connection which is projectively compact of order two,
assuming that the projective second fundamental form is
non--degenerate. By our results, this is always satisfied for
Levi--Civita connections, in general it can be characterized in terms
of the asymptotics of the Schouten tensor. Under these assumptions,
the boundary inherits a pseudo--Riemannian conformal structure, which
can  therefore be described in terms of (conformal) tractors. In the
case of the Levi--Civita connection of a non--Ricci--flat Einstein
metric, one can use the general theory of holonomy reductions to show
that there is a simple relation between conformal tractors on the
boundary and projective tractors in the interior
\cite{ageom,hol-red}. The main aim of Section \ref{4} is to show that,
although the relation is considerably more complicated in general
(which is not surprising in view of the rather intricate relation
between the geometries on $M$ and on $\partial M$), it can still be
described explicitly as follows.

The first main result is that in the general setting it is still the
case that the conformal standard tractor bundle of the boundary may be
naturally identified with the restriction to the boundary of the
projective standard tractor bundle $\mathcal{T}$. This is proved in
Proposition \ref{prop4.1}. This statement requires understanding how
the conformal tractor metric arises. The projectively compact
connection gives rise to a canonical defining density $\tau$ for the
boundary, and applying the BGG splitting operator to this density, one
obtains a bundle metric $L(\tau)$ on the projective standard tractor
bundle. This bundle metric can then be analyzed similarly to the one
obtained from the metricity solution in Section \ref{2}.  We first
prove non--degeneracy of this along $\partial M$, so the boundary
value defines a bundle metric on the restriction of the projective
standard tractor bundle to $\partial M$. This is also established in
Proposition \ref{prop4.1} and it is noted there that using this with
the natural filtration of $\mathcal{T}$ we obtain the usual filtration
of the conformal tractor bundle and tractor metric $L(\tau)$ is seen
to be compatible with conformal metric.

The next main result is Theorem \ref{thm4.1a} which shows that if the
projective tractor covariant derivative of $L(\tau)$ vanishes at the
boundary then the normal conformal tractor connection on the boundary
arises as simply a pullback of the projective tractor connection.

Finally, we treat the general case and describe how the conformal
standard tractor connection on $\partial M$ can be constructed from
the projective standard tractor connection on $\barm$ in two
steps. First one can construct a torsion free tractor connection on
all of $\barm$, which is metric for the given bundle
metric. Restricting this to the boundary, a final step of
normalization leads in Theorem \ref{thm4.4} to the conformal standard
tractor connection. Several simplifications in particular cases (for
example for projectively compact pseudo--Riemannian metrics) are
discussed along the way.

\section{Necessity of asymptotic form}\label{2}
We start by reviewing the concept of projective compactness from
\cite{Proj-comp}, as defined for any affine connection.  Following
this move to the setting where the interior is equipped with a metric.
So given a manifold $\barm$ with boundary $\partial M$ and interior
$M$, we  assume that we have given a pseudo--Riemannian metric $g$
on $M$ such that the projective structure determined by the
Levi--Civita conneciton $\nabla$ of $g$ smoothly extends to
$\barm$. We then specialize to the case that $\nabla$ is
projectively compact of order $\al=2$ and our aim is to prove that this
implies a certain asymptotic form for $g$. The key towards proving
this is to analyze the consequences of the existence of a projectively
compact Levi--Civita connection in the projective class in terms of
tractors.

\subsection{Projective compactness}\label{2.0}
Throughout this article, smooth means $C^\infty$, we consider a smooth
manifold $\barm$ with boundary $\partial M$ and interior $M$. By a
local defining function for $\partial M$ we mean a smooth function
$\rho:U\to [0,\infty)$ defined on an open subset of $\barm$ such that
  $\rho^{-1}(\{0\})=U\cap\partial M$ and $d\rho(x)\neq 0$ for all
  $x\in U\cap\partial M$. By $\Cal E(w)$ we will denote the bundle of
  densities of projective weight $w$. Putting these notions together
leads to the concept
  of a {\em defining density of weight} $w$. We will only need this notion locally. On an open set $U$ of $\barm$, 
this is a  section $\si$
  of $\Cal E(w)$ which is of the form $\rho\hat\si$ for a 
  local defining function $\rho$ for $\partial M$ and a section $\hat\si$ of
  $\Cal E(w)$ which is nowhere vanishing on $U$.

Given an affine connection $\nabla$ and a one--form $\Ups$ on some
manifold, we will write $\hat\nabla=\nabla+\Ups$ for the projectively
modified connection defined by 
\begin{equation}\label{proj-def}
\hat\nabla_\xi\eta=\nabla_\xi\eta+\Ups(\xi)\eta+\Ups(\eta)\xi,
\end{equation}
for vector fields $\xi$ and $\eta$.  Two connections are related in
this way if and only if they have the same geodesics up to
parameterization.

Now in the setting of a manifold with boundary, $\barm=M\cup\partial
M$, a linear connection $\nabla$ on $TM$ is called
\textit{projectively compact of order $\al>0$} if and only if for any
point $x\in\partial M$, there is a local defining function $\rho$ for
$\partial M$ defined on a neighborhood $U$ of $x$, such that the
projectively modified connection
$\hat\nabla=\nabla+\tfrac{d\rho}{\al\rho}$ admits a smooth extension
from $U\cap M$ to all of $U$. This means that for all vector fields
$\xi,\eta$ which are smooth up to the boundary, also
$$
\hat\nabla_\xi\eta=\nabla_\xi\eta+\tfrac1{\al\rho}d\rho(\xi)\eta+
\tfrac1{\al\rho}d\rho(\eta)\xi 
$$ 
admits a smooth extension to the boundary. Equivalently, the
Christoffel symbols of $\hat\nabla$ in some local chart have to admit
such an extension.

It is easily verified that this condition is independent of the choice
of the defining function $\rho$, i.e.~if the projective modification
associated to $\rho$ extends, then also the one associated to any
other defining function is smooth up to the boundary. On the other
hand, the parameter $\al$ cannot be eliminated. Indeed, it turns out
that for connections which are \textit{special}, i.e.~preserve a
volume density, $\al$ controls the growth of a parallel volume density
towards the boundary, see section 2.2 of \cite{Proj-comp}. The result
on volume growth can be nicely reformulated in terms of defining
densities. If $\nabla$ is projectively compact of order $\al$ and
preserves a volume density, then for each $w$, the density bundle
$\Cal E(w)$ admits non--zero parallel sections. However, precisely for
$w=\al$, such a section can be extended by zero to a defining density
for $\partial M$. It is also the case that, for connections preserving
a volume density, projective compactness of order $\al$ is equivalent
to the fact that the projective structure of $\nabla$ extends to all
of $\barm$ plus the appropriate rate of volume growth, see Proposition
2.3 of \cite{Proj-comp}. As in most of \cite{Proj-comp} we will
restrict to the case $0<\al\leq 2$ in this article. For this range of
$\al$ the boundary is at infinity, see Proposition 2.4 in
\cite{Proj-comp}.

\subsection{Metricity of projective structures and tractors}\label{2.1} 
Here and below we use abstract index notation and the convention that
adding ``(w)'' to the name of a vector bundle indicates a tensor
product with the density bundle $\Cal E(w)$. 

Given a smooth manifold of dimension $n+1$ endowed with a projective
structure, one can construct a vector bundle $\Cal T^*$ of rank $n+2$,
which contains the bundle $\Cal E_a(1)$ of weighted one--forms as a
smooth subbundle such that the quotient is isomorphic to $\Cal
E(1)$. This so--called \textit{standard cotractor bundle} can be
canonically endowed with a linear connection $\nabla^{\Cal T^*}$
determined by the projective structure \cite{BEG}. Together, the
configuration of bundle, subbundle and connection is uniquely
determined up to isomorphism. One can then apply constructions with
vector bundles and induced connections to obtain general tractor
bundles, each of which is endowed with a canonical tractor
connection. In particular, the standard tractor bundle $\Cal T$ is the
dual bundle to $\Cal T^*$. We will mainly need the bundles $S^2\Cal
T^*$ and $S^2\Cal T$ of symmetric bilinear forms on $\Cal T$
respectively $\Cal T^*$.

Writing the composition series for $\Cal T^*$ from above as $\Cal
T^*=\Cal E_a(1)\rpl \Cal E(1)$, one can describe the induced
composition series for the other tractor bundles mentioned above as 
\begin{equation}\label{comp-ser}
\begin{aligned}
\Cal T&=\Cal E(-1)\rpl \Cal E^a(-1) \\ S^2\Cal T&=\Cal E(-2)\rpl \Cal
E^a(-2)\rpl\Cal E^{(ab)}(-2) \\ S^2\Cal T^*&=\Cal E_{(ab)}(2)\rpl\Cal
E_a(2)\rpl \Cal E(2).
\end{aligned}
\end{equation}
A choice of connection in the projective class gives rise to an
isomorphism $\Cal T^*\cong\Cal E_a(1)\oplus \Cal E(1)$ and likewise
for the other tractor bundles. Given such a choice, we write sections
of a tractor bundle as column vectors with the component describing the
canonical quotient of the tractor bundle on top and the component in
the canonical subbundle in the bottom. Changing the connection
projectively by a one--form $\Ups_a$, there are explicit formulae for
the changes of these identifications. For the bundles $S^2\Cal T$ and
$S^2\Cal T^*$ we follow the conventions from \cite{Proj-comp}, and the
corresponding formulae for these cases are given as equations (3.5) and
(3.11) in that reference. 

Via the so--called BGG-machinery (see \cite{CSS-BGG},
\cite{Calderbank-Diemer}, and the sketch in \cite{Proj-comp}), each
tractor bundle induces a natural differential operator acting on
sections of its canonical quotient bundle, which defines an
overdetermined system of PDEs (``first BGG--equation'') on that
bundle. Closely related to this is the so--called \textit{splitting
  operator} $L$, which maps sections of the quotient bundle to
sections of the tractor bundle.

We first need the application of these ideas to the metricity equation
for projective structures, see \cite{Eastwood-Matveev}. This
corresponds to the tractor bundle $S^2\Cal T$. Take a projective
manifold $N$ and a pseudo--Riemannian metric $g_{ab}$ on $N$ with
inverse $g^{ab}$. Then $g$ canonically determines a volume density on
$N$, and forming an appropriate power of this density one obtains a
nowhere--vanishing section $\tau\in\Cal E(2)$, which is parallel for
the Levi--Civita connection of $g$. It then turns out that the
Levi--Civita connection of $g$ lies in the given projective class if
and only if $\tau^{-1}g^{ab}$ is a solution of the metricity
equation. The crucial fact for what follows is that this can be
characterized in terms of its image under the splitting
operator. Indeed, in \cite{Eastwood-Matveev}, the authors construct a
natural modification $\nabla^p$ of the tractor connection on $S^2\Cal
T$ such that $\tau^{-1}g^{ab}$ solves the metricity equation if and
only if $L(\tau^{-1}g^{ab})$ is parallel for this modified
connection. In fact, such connections can be constructed for any
tractor bundle (associated to any parabolic geometry), see
\cite{HSSS}.

An explicit formula for the splitting operator $L$ is derived in
Proposition 3.1 of \cite{CGM} (unfortunately with a sign error in the
printed version, that is easily corrected). Given $\si^{ab}\in\Ga(\Cal
E^{(ab)}(-2))$ and a connection $\tilde\nabla$ in the projective class,
the formula for $L(\si^{ab})$ in the splitting determined by
$\tilde\nabla$ on a manifold of dimension $n+1$ is given by
\begin{equation}\label{LStd}
\begin{pmatrix}
\si^{ab} \\ -\frac{1}{n+2}\tilde\nabla_d\si^{dc} \\ 
\frac1{(n+1)(n+2)}\tilde\nabla_d\tilde\nabla_e\si^{de}+
\frac1{n+1}\Rho_{de}\si^{de}\end{pmatrix}.
\end{equation}

Using this, one obtains the following fundamental result that has been
proved in \cite{Scalar}. We include the proof for
completeness. 

\begin{prop}\label{prop2.1}
Let $\barm$ be a smooth manifold with boundary $\partial M$ and
interior $M$. Suppose $g_{ab}$ is a pseudo--Riemannian metric on $M$
such that the projective structure of its Levi--Civita connection
$\nabla$  admits a smooth extension to all of $\barm$. 

Then the corresponding solution $\tau^{-1}g^{ab}\in\Ga(\Cal
E^{(ij)}(-2)|_M)$ of the metricity equation and the scalar curvature
$S\in C^\infty(M,\Bbb R)$ of $g$ both admit smooth extensions to all
of $\barm$.
\end{prop}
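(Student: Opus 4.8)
The plan is to deduce everything from the smooth extension of the prolongation (metricity) connection together with the parallelity of the BGG splitting of $\tau^{-1}g^{ab}$. Since by hypothesis the projective structure of $\nabla$ extends to $\barm$, the bundle $S^2\Cal T$, its tractor connection, and hence the modified connection $\nabla^p$ of \cite{Eastwood-Matveev} are all smooth up to the boundary. On $M$ the Levi--Civita connection lies in the projective class, so $\si^{ab}:=\tau^{-1}g^{ab}$ solves the metricity equation and $L(\si^{ab})\in\Ga(S^2\Cal T|_M)$ is $\nabla^p$--parallel.

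The first step is to show that $L(\si^{ab})$ itself extends smoothly to $\barm$. This rests on the general fact that a section which is parallel for a connection that is smooth up to the boundary extends smoothly: in a local frame near a boundary point the parallelity equations form a first order linear system whose coefficients are smooth up to the boundary, so writing $L(\si^{ab})$ as the parallel transport of its value at a fixed interior point along a smoothly varying family of curves exhibits it as a function that is smooth in the endpoint up to and including $\partial M$. Granting this, the first assertion of the proposition is immediate, since the projection $S^2\Cal T\to\Cal E^{(ab)}(-2)$ onto the canonical quotient is a smooth bundle map over $\barm$ and carries $L(\si^{ab})$ to $\si^{ab}=\tau^{-1}g^{ab}$.

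For the scalar curvature the obstacle is that the Levi--Civita connection is not smooth up to the boundary, so its curvature cannot be read off directly; moreover the natural object produced by the tractor machinery is the weight $-2$ density $\tau^{-1}S$ rather than the function $S$, and $\tau$ may degenerate along $\partial M$. To handle the first point I would fix a connection $\hat\nabla=\nabla+\Ups$ in the projective class that is smooth up to the boundary, which exists because the projective structure extends. In the splitting determined by $\hat\nabla$ every slot of $L(\si^{ab})$ is smooth up to $\barm$; denote its bottom slot by $\hat\be$. In the Levi--Civita splitting $\si^{ab}$ is parallel, so the middle slot of \eqref{LStd} vanishes and the bottom slot reduces to $\tfrac1{n+1}\Rho_{de}\si^{de}=\tfrac1{n+1}\tau^{-1}\Rho_{de}g^{de}$, which is a nonzero universal multiple of $\tau^{-1}S$ because the trace of the projective Schouten tensor of a Levi--Civita connection is a constant multiple of the scalar curvature. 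The transformation law for the bottom component of $S^2\Cal T$ (equation (3.11) of \cite{Proj-comp}) then gives $\tfrac1{n+1}\Rho_{de}\si^{de}=\hat\be-c\,\Ups_a\Ups_b\si^{ab}$ for a universal constant $c$, the right hand side being smooth up to $\barm$; hence $\tau^{-1}S$ extends smoothly.

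It remains to remove the weight, and here the key identity is $\det(\si^{ab})=\pm\tau$ as sections of $\Cal E(2)$, with sign fixed by the signature of $g$. This is forced by the definition of $\tau$ as the suitable power of the metric volume density, together with the weight count for the determinant of a symmetric two--tensor valued in $\Cal E(-2)$ on an $(n+1)$--manifold. Since $\si\mapsto\det\si$ is a smooth polynomial bundle map $\Cal E^{(ab)}(-2)\to\Cal E(2)$ and $\si^{ab}$ is smooth up to $\barm$, the density $\tau$ extends smoothly, vanishing exactly where $\si^{ab}$ degenerates, which is harmless. Combining, on $M$ one has $S=\pm n(n+1)\det(\si^{ab})\bigl(\hat\be-c\,\Ups_a\Ups_b\si^{ab}\bigr)$, a product of quantities that are smooth up to the boundary, so $S$ extends smoothly to $\barm$. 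I expect the verification of the determinant identity and of the smooth--extension principle for parallel sections to be the only genuinely delicate points.
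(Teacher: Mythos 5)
Your treatment of the first assertion --- extending $L(\tau^{-1}g^{ab})$ by parallel transport for the boundary--smooth prolongation connection $\nabla^p$ and then projecting to the quotient slot --- is correct and is exactly the paper's argument.

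The scalar curvature argument, however, breaks at the assertion that $\hat\be-c\,\Ups_a\Ups_b\si^{ab}$ is smooth up to the boundary. The one--form $\Ups$ relates the Levi--Civita connection $\nabla$, which is defined only on $M$, to the boundary--smooth connection $\hat\nabla$; nothing in the hypotheses makes $\Ups$ smooth up to $\partial M$, and in the cases this proposition is designed for it is not: for a metric projectively compact of order two one has $\Ups_a=\rho_a/(2\rho)$. Worse, the quantity you claim is smooth equals, by your own identity, $\tfrac1{n(n+1)}\tau^{-1}S$; since in that setting $\tau=\rho\hat\tau$ is a defining density for $\partial M$, this blows up like $1/\rho$ wherever $S$ has a non--zero boundary limit. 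That happens already for hyperbolic space in the Klein model, and Proposition \ref{prop2.2} shows it happens on a dense subset of $\partial M$ for \emph{every} metric projectively compact of order two. So the intermediate claim that $\tau^{-1}S$ extends smoothly is false, not merely unproved (indeed it is equivalent to what you are trying to establish, combined with the singular behaviour of $\tau$), and your final formula $S=\pm n(n+1)\det(\si^{ab})\bigl(\hat\be-c\,\Ups_a\Ups_b\si^{ab}\bigr)$ writes $S$ as a product of a factor vanishing on $\partial M$ with a factor blowing up there, from which smoothness of $S$ cannot be read off. The repair --- and this is the paper's proof --- is to treat that product as a single invariant object: the determinant of the full $(n+2)\x(n+2)$ Gram matrix of the bundle metric $L(\tau^{-1}g^{ab})$ on $\Cal T^*$. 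This determinant is valued in a line bundle ($(\La^{n+2}\Cal T^*)^2$ in the paper's notation) which is trivial and flat for the induced tractor connection, hence it is, up to an overall non--zero constant, a well--defined smooth function on all of $\barm$; computing it in the Levi--Civita splitting via the block form \eqref{h-nabla}, together with your (correct) identity $\det(\si^{ab})=\pm\tau$, equivalently $\tau^{-n-2}\det(g^{ab})=1$, shows it equals $\pm\tfrac{1}{n(n+1)}S$ over $M$. You had both ingredients; the missing idea is that the singularity of the bottom slot and the vanishing of $\det(\si^{ab})$ must be made to cancel inside one smooth determinant, rather than being handled as separate factors.
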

\begin{proof}
Since the projective structure of $\nabla$ extends to all of $\barm$,
all projective tractor bundles an tractor connections are defined on
all of $\barm$. The same holds for the modification $\nabla^p$ of the
tractor connection on $S^2\Cal T$ from \cite{Eastwood-Matveev}. 

Now over $M$, we can apply the splitting operator to obtain a section
$L(\tau^{-1}g^{ab})$ of the bundle $S^2\Cal T$. Since
$\tau^{-1}g^{ab}$ satisfies the metricity equation, this section is
parallel for $\nabla^p$ over $M$, so we can extend it by parallel
transport to a smooth section of $S^2\Cal T$ over all of $\barm$,
which is still parallel for $\nabla^p$. Projecting this to a section
of the quotient bundle $\Cal E^{(ab)}(-2)$, we obtain the required
extension of $\tau^{-1}g^{ab}$. 

On the other hand, we can view $L(\tau^{-1}g^{ab})$ as a smooth bundle
metric on $\Cal T^*$ (defined over all of $\barm$). Forming the
determinant of the Gram matrix of this bundle metric with respect to
local frames of $\Cal T^*$ gives rise to a well defined section of the
bundle $(\La^{n+2}\Cal T^*)^2$. Now this bundle is always trivial and
the linear connection inherited from the tractor connection is flat,
so up to an overall non--zero constant factor, this determinant is a
well defined smooth function on $\barm$.

Over $M$, we can work in the splitting determined by $\nabla$. Since
both $g^{ij}$ and $\tau^{-1}$ are parallel for $\nabla$ over $M$, we
conclude from \eqref{LStd} that, over $M$ and in the splitting
corresponding to $\nabla$, we have
\begin{equation}\label{h-nabla}
L(\tau^{-1}g^{ab})=\begin{pmatrix} \tau^{-1}g^{ab} \\ 0
\\ \tfrac{1}{n+1}\tau^{-1}g^{ij}\Rho_{ij}
\end{pmatrix}
\end{equation}
Hence over $M$, the determinant of $L(\tau^{-1}g^{ab})$ is given by
$\tau^{-n-2}\det(g^{ab})g^{ij}\Rho_{ij}$. By definition,
$\tau^{-n-2}\det(g^{ab})=1$, whence $\det(L(\tau^{-1}g^{ab}))$ is a
non--zero multiple of $S$, thus providing the required extension.
\end{proof}

\subsection{The case of projective compactness of order
  two}\label{2.2} 
We next assume that the metric $g_{ab}$ on $M$ is projectively compact
of order two in the sense introduced in \cite{Proj-comp}. By
definition this means that, for any local defining function $\rho$ for
the boundary $\partial M$, we get a distinguished projective
modification ${}^\rho\nabla$ of $\nabla$, which admits a smooth
extension to the boundary. Recall that this modification is associated to the
one--form $\Ups=\tfrac{1}{2\rho}d\rho$, and so for vector fields $\xi$ and
$\eta$ which are smooth up to the boundary also the vector field 
$$
\nabla_\xi\eta+\tfrac{1}{2\rho} d\rho(\xi)\eta+\tfrac{1}{2\rho} d\rho(\eta)\xi
$$
admits a smooth extension of the boundary. 

There is a second crucial consequence of projective compactness of
order two. Namely the non--vanishing density $\tau\in\Ga(\Cal
E(2)|_M)$ can be smoothly extended by zero to all of $\barm$ and then
becomes a defining density for $\partial M$, see Proposition 2.3 of
\cite{Proj-comp}. In terms of a defining function $\rho$ as above,
this means that $\tau=\rho\hat\tau$, where $\hat\tau$ is parallel for
${}^\rho\nabla$ and nowhere vanishing. 

Using these facts we can now analyze the extensions
guaranteed by Proposition \ref{prop2.1}. 

\begin{prop}\label{prop2.2}
In the setting of Proposition \ref{prop2.1}, assume in addition that
$g_{ab}$ is projectively compact of order $2$. Then the zero set of the
boundary value of the smooth extension of $S$ has empty
interior. Hence the boundary value of $L(\tau^{-1}g^{ab})$ is
non--degenerate on a dense open subset of $\partial M$.  
\end{prop}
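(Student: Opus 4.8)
The plan is to reduce the second assertion to the first and then to prove the first by analysing the boundary value of the tractor metric $H:=L(\tau^{-1}g^{ab})$.

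First I would record the consequence of the proof of Proposition \ref{prop2.1}: the determinant of $H$, computed in a frame of $\La^{n+2}\Cal T^*$ that is parallel for the (flat) tractor connection, is a smooth function on all of $\barm$ which over $M$ equals a fixed nonzero multiple $cS$ of the scalar curvature. Since $M$ is dense in $\barm$, the identity $\det H=cS$ persists on the boundary. Consequently the boundary value $H_0:=H|_{\partial M}$ is a non--degenerate bilinear form on $\Cal T^*|_{\partial M}$ at exactly those points where $S|_{\partial M}\neq 0$. Thus $\{H_0\text{ degenerate}\}=\{S|_{\partial M}=0\}$, the locus where $H_0$ is non--degenerate is open, and it is dense precisely when the zero set of $S|_{\partial M}$ has empty interior. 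Hence the two assertions are equivalent and it suffices to prove the first.

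To study $S|_{\partial M}$ I would pass from the splitting determined by $\nabla$, in which $H$ is given by \eqref{h-nabla}, to the splitting determined by the extended connection ${}^\rho\nabla$, which is smooth up to the boundary; the relevant change--of--splitting formula for $S^2\Cal T$ is recorded in \cite{Proj-comp}. The top slot is the intrinsic quotient component and remains $\sigma^{ab}:=\tau^{-1}g^{ab}$, which is smooth up to the boundary by Proposition \ref{prop2.1}. The middle slot, which over $M$ is a nonzero multiple of $\rho^{-1}d\rho_b\sigma^{ab}$, must also be smooth up to the boundary; this forces $d\rho_b\sigma^{ab}=O(\rho)$, so the boundary value $\sigma_0^{ab}$ annihilates $d\rho$. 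Choosing a coframe adapted to $d\rho$, the Gram matrix of $H_0$ then becomes block--triangular and one computes that $\det H_0$ is, up to a nonvanishing frame factor, the product of the determinant of the tangential part of $\sigma_0^{ab}$ with the square of the $d\rho$--component of the middle slot. In particular $S|_{\partial M}$ can only vanish where one of these two factors does.

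The main obstacle is the first assertion proper: ruling out that $S|_{\partial M}$ vanishes on a nonempty open set $V\subseteq\partial M$. The natural strategy is by contradiction, exploiting that $H$ is the prolongation of the metricity solution $\sigma$, i.e. it is parallel for the Eastwood--Matveev connection $\nabla^p$, and that $\sigma=\tau^{-1}g^{ab}$ is non--degenerate throughout the interior $M$. One would like to argue that an open degeneracy locus of $H$ along $\partial M$ propagates, via $\nabla^p$--parallel transport and the smooth extension, into $M$, contradicting interior non--degeneracy. The difficulty is that $\nabla^p$ is the tractor connection modified by an algebraic curvature term and is therefore \emph{not} induced from a connection on $\Cal T$; hence $\nabla^p$--parallel transport does not preserve the rank of $H$ as a bilinear form, and constancy of rank cannot be invoked directly. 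I expect the resolution to require the finite--type (prolonged) structure of the metricity system together with the special algebraic role of the defining density $\tau$ --- equivalently of the canonical tractor $X$, along which $\nabla^p$ does behave controllably --- to convert open vanishing of $\det H$ on the boundary into a contradiction with non--degeneracy of $g$ in the interior. This propagation step, rather than the bookkeeping of the two paragraphs above, is where I expect the real work to lie.
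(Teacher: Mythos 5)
Your first two paragraphs are correct and reproduce the paper's own bookkeeping: the reduction of the second assertion to the first via $\det H=cS$, and the passage to the splitting determined by ${}^\rho\nabla$, which forces $g^{ai}\rho_i=O(\rho^2)$ and gives the block structure of the boundary Gram matrix. But the heart of the proposition --- ruling out that $S|_{\partial M}$ vanishes on a nonempty open subset of $\partial M$ --- is precisely what you leave open. The strategy you sketch for it (propagating degeneracy of $H$ into $M$ by $\nabla^p$--parallel transport) is one you yourself correctly diagnose as non-functional: since $\nabla^p$ is not induced from a connection on $\Cal T$, its parallel transport is only a linear isomorphism of the fibres of $S^2\Cal T$ and need not preserve the rank of a bilinear form. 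The resolution you anticipate (finite--type structure of the prolonged metricity system, the algebraic role of the canonical tractor) is not what the paper uses, and no argument along those lines is supplied. So the proposal has a genuine gap at the decisive step.

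The missing idea is elementary and quantitative, and it is where projective compactness of order two enters beyond merely providing the extended connection ${}^\rho\nabla$: the volume asymptotics of Proposition 2.3 of \cite{Proj-comp}. First, smoothness up to the boundary of the bottom slot of $H$ in the ${}^\rho\nabla$--splitting, combined with the smooth extension of $S$ from Proposition \ref{prop2.1} and the decay $g^{ai}\rho_i=O(\rho^2)$, forces the limit relation $\tfrac1{4\rho^2}g^{ij}\rho_i\rho_j\to-\tfrac1{n+1}g^{ij}\Rho_{ij}$ at $\partial M$. Hence if $S$ vanished on an open boundary set, then $g^{ab}\rho_a\rho_b$ would there be $O(\rho^3)$ rather than $O(\rho^2)$. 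Writing the matrix of $g^{ab}$ in a coframe smooth up to the boundary with first element $\rho_a$ and counting powers of $\rho$ ($\rho^2$ from the first row, $\rho$ from each remaining row, and then another $\rho$ from the first column), this extra order of vanishing yields $\det(g^{ab})=\rho^{n+3}\det(a^{ij})$ with $a^{ij}$ smooth up to the boundary. On the other hand, the volume growth for order--two projectively compact metrics gives $\det(g_{ab})=\rho^{-n-2}\hat\nu$ with $\hat\nu$ smooth; multiplying the two determinants gives $1=\rho\det(a^{ij})\hat\nu$, which is absurd at the boundary. Your proposal never invokes this volume--growth input, yet it is the decisive use of the order--two hypothesis; without it, no amount of analysis of the tractor metric alone will produce the contradiction.
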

\begin{proof}
For a local defining function $\rho$ for $\partial M$, we get the
connection ${}^\rho\nabla$, which is smooth up to the boundary. Hence
if we express $L(\tau^{-1}g^{ab})$ over $M$ in the splitting
determined by ${}^\rho\nabla$, Proposition \ref{prop2.1} implies that
all slots will admit a smooth extension to the boundary. We use
Formula \eqref{h-nabla} from the proof of Proposition \ref{prop2.1}
together with the formula (3.11) in Section 3.6 of \cite{Proj-comp} and
the fact that $\Ups_a=\tfrac{1}{2\rho}\rho_a$, where
$\rho_a=d\rho$. This shows that over $M$ in the splitting determined
by ${}^\rho\nabla$, we get 
\begin{equation}\label{h-nrho}
L(\tau^{-1}g^{ab})=\begin{pmatrix}  \tau^{-1}g^{ab}
\\ \tfrac{-1}{2\rho}\tau^{-1}g^{ci}\rho_i 
\\ \tfrac{1}{n+1}\tau^{-1}g^{ij}\Rho_{ij}+
\tfrac{1}{4\rho^2}\tau^{-1}g^{ij}\rho_i\rho_j 
\end{pmatrix}.
\end{equation}
Since $\tau^{-1}=\tfrac{1}{\rho}\hat\tau^{-1}$ and $\hat\tau^{-1}$ is
nowhere vanishing, we conclude that $\tfrac{1}{\rho}g^{ab}$,
$\tfrac{1}{\rho^2}g^{ai}\rho_i$ and 
$$
\tfrac{1}{\rho}\left(\tfrac{1}{n+1}g^{ij}\Rho_{ij}+
\tfrac{1}{4\rho^2}g^{ij}\rho_i\rho_j\right)
$$ 
admit smooth extensions to the boundary. But from Proposition
\ref{prop2.1} and the fact that $\tfrac{1}{\rho^2}g^{ai}\rho_i$
extends, we conclude that already the sum in the bracket in the last
displayed formula admits a smooth extension to the boundary. Thus we
conclude that this extension has to vanish along the boundary, so
$\tfrac{1}{4\rho^2}g^{ij}\rho_i\rho_j$ approaches
  $\tfrac{-1}{n+1}g^{ij}\Rho_{ij}$ at the boundary. 

We can phrase the information we have obtained so far in terms of the
matrix expression for the inverse metric $g^{ab}$ with respect to  a local
frame for $T^*M$ which is smooth up to the boundary and has $\rho_a$
as its first element. Since $\tfrac{1}{\rho^2}g^{ai}\rho_i$ admits a
smooth extension, the elements in the first row and in the first
column of this matrix can be written as $\rho^2a^{1j}$ (respectively
$\rho^2a^{j1}$) for functions $a^{1j}=a^{j1}$ which are smooth up to
the boundary. On the other hand, since $\tfrac{1}{\rho}g^{ab}$ admits
a smooth extension, the other entries in the matrix can all be written
as $\rho a^{ij}$, where again $a^{ij}=a^{ji}$ is smooth up to the
boundary. 

With a view towards contradiction, assume that the boundary value of
$S$ vanishes on an open subset of $\partial M$. Restricting to an
appropriate open subset in $\barm$, we can assume that $S$ vanishes
identically. Thus $g^{ij}\Rho_{ij}$ vanishes along the boundary, so
from above we conclude that $\tfrac{1}{\rho^2}g^{ij}\rho_i\rho_j$
vanishes along the boundary. Thus we can write $g^{ij}\rho_i\rho_i$ as
$\rho^3 a^{11}$ where $a^{11}$ is smooth up to the boundary. Forming
$\det(g^{ij})$ we see that we can first take a factor $\rho^2$ out of
the first row and then a factor $\rho$ out of each of the other rows,
and then still a factor $\rho$ out of the first column, so
$\det(g^{ij})=\rho^{2+n+1}\det(a^{ij})$. On the other hand, the
results on volume asymptotics from Proposition 2.3 of \cite{Proj-comp}
show that, viewed as a density, $\det(g_{ij})=\rho^{-n-2}\hat\nu$
where $\hat\nu$ is smooth up to the boundary. Multiplying we see that
$1=\rho\det(a_{ij})$ which contradicts the fact that $\det(a^{ij})$ is
smooth up to the boundary.
\end{proof}

\subsection{The inverse of the metricity tractor}\label{2.3}
Knowing that the boundary value of $S$ is non--zero on a dense open
subset of $\partial M$, we know that the metricity tractor
$L(\tau^{-1}g^{ab})$ is a non--degenerate bundle metric on a
neighborhood of this subset in $\barm$. Hence we can form its inverse
there, and this is a section $\Ph$ of $S^2\Cal T^*$ which is smooth up
to the boundary and non--degenerate. This leads to fundamental
information on the asymptotic behavior of the metric $g_{ab}$.

\begin{prop}\label{prop2.3}
Let $g$ be a pseudo--Riemannian metric on $M$ which is projectively
compact of order $2$.  Let $x\in\partial M$ be a point such that the
boundary value of the scalar curvature $S$ is non--zero in $x$, and
let $\rho$ be a defining function for $\partial M$ which is defined on
a neighborhood of $x$ in $\barm$.

Then locally around $x$, the section 
$$ h_{ab}:=\rho
g_{ab}+\tfrac{n+1}{4\rho}(g^{ij}\Rho_{ij})^{-1}\rho_a\rho_b
$$
admits a smooth extension to the boundary and the boundary values are
non--degenerate as bilinear forms on the spaces $T_y\partial M$. 
\end{prop}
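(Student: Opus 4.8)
The plan is to compute the inverse bundle metric $\Ph = L(\tau^{-1}g^{ab})^{-1}$ explicitly in the splitting determined by ${}^\rho\nabla$, and to read off from its top slot the asymptotic behaviour of $h_{ab}$. The starting point is the matrix expression for $L(\tau^{-1}g^{ab})$ in \eqref{h-nrho}. Recall from the proof of Proposition \ref{prop2.2} that, after multiplying by $\rho$ via $\tau^{-1}=\tfrac1\rho\hat\tau^{-1}$, the three slots rescale to quantities smooth up to the boundary; moreover the top slot $\tfrac1\rho g^{ab}$ has non--degenerate boundary value (in directions tangent to $\partial M$), and the bottom scalar slot $\tfrac1\rho\bigl(\tfrac1{n+1}g^{ij}\Rho_{ij}+\tfrac1{4\rho^2}g^{ij}\rho_i\rho_j\bigr)$ is, up to a non--zero constant, the extension of the scalar curvature $S$. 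By hypothesis this is non--zero at $x$, so $L(\tau^{-1}g^{ab})$ is a non--degenerate bundle metric on a neighbourhood and $\Ph$ exists and is smooth up to the boundary.

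First I would set up the inversion of a symmetric bilinear form on the rank $n+2$ bundle $\Cal T^*$ written in block form relative to the filtration $\Cal E_a(1)\subset\Cal T^*$, with composition series $\Cal E_{(ab)}(2)\rpl\Cal E_a(2)\rpl\Cal E(2)$. The point is that $L(\tau^{-1}g^{ab})$, viewed as a bilinear form on $\Cal T^*$, has a specific block structure: its leading $\binom02$ part is $\tau^{-1}g^{ab}$ (acting on the $\Cal E_a(1)$ directions), its mixed part involves $\tfrac{-1}{2\rho}\tau^{-1}g^{ci}\rho_i$, and its scalar part is the bottom slot of \eqref{h-nrho}. Inverting such a block form is standard linear algebra: the $\binom20$--part (the $h_{ab}$ slot of $\Ph$) is governed by the inverse of the top block $\tau^{-1}g^{ab}$ together with a Schur--complement correction coming from the mixed and scalar slots. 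Carrying this out and using $\tau^{-1}=\tfrac1\rho\hat\tau^{-1}$, I expect the leading behaviour of the top slot of $\Ph$ to be precisely $\rho g_{ab}$ plus a correction term proportional to $\tfrac1\rho(g^{ij}\Rho_{ij})^{-1}\rho_a\rho_b$, with the constant $\tfrac{n+1}4$ emerging from the normalization in \eqref{LStd} and \eqref{h-nrho}. That top slot is exactly the claimed $h_{ab}$, and since $\Ph$ is smooth up to the boundary so is $h_{ab}$.

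The main obstacle is establishing \emph{non--degeneracy} of the boundary value of $h_{ab}$ as a form on $T_y\partial M$, rather than merely its smooth extension. The extension is essentially formal once $\Ph$ is known to be smooth, but the boundary value of $\Ph$ could a priori degenerate along $\partial M$, and one must control how the filtration of $\Cal T^*$ interacts with the tangent directions to the boundary. Here I would use the defining density structure: $\tau=\rho\hat\tau$ makes $d\tau$ (equivalently $\rho_a$ up to the non--vanishing factor $\hat\tau$) a canonical conormal to $\partial M$, and the splitting by ${}^\rho\nabla$ is adapted to it. The key is that the correction term $\tfrac{n+1}{4\rho}(g^{ij}\Rho_{ij})^{-1}\rho_a\rho_b$ is rank one and supported in the conormal direction $\rho_a$; when restricted to $T_y\partial M$, on which $\rho_a$ annihilates, this correction drops out. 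Thus the boundary value of $h_{ab}$ restricted to $T_y\partial M$ agrees with the boundary value of $\rho g_{ab}$ restricted there, which is exactly the tangential non--degeneracy already recorded (the top slot $\tfrac1\rho g^{ab}$ has non--degenerate boundary value). I would make this precise using the matrix description from the proof of Proposition \ref{prop2.2}, in which the inverse metric has $\rho^2 a^{1j}$ in the first row/column and $\rho a^{ij}$ elsewhere relative to a frame with $\rho_a$ first; inverting this matrix and tracking the powers of $\rho$ yields the asymptotic form of $g_{ab}$ itself, and the non--degeneracy of $(a^{ij})$ tangent to the boundary gives the required non--degeneracy of $h_{ab}$ on $T_y\partial M$.
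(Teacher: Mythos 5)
Your first two paragraphs are essentially the paper's own argument, executed by a slightly different computation. The paper also inverts the bundle metric $L(\tau^{-1}g^{ab})$ and reads $h_{ab}$ off from the resulting section $\Ph$ of $S^2\Cal T^*$ in the splitting determined by ${}^\rho\nabla$; the difference is that the paper inverts in the $\nabla$--splitting, where \eqref{h-nabla} is block diagonal, and then applies the change--of--splitting formula, while you invert \eqref{h-nrho} directly by a Schur complement. That route does work: the Schur complement of the top block is exactly $\tfrac1{n+1}\tau^{-1}g^{ij}\Rho_{ij}$ (the singular $\rho^{-2}$ terms cancel), and one recovers precisely \eqref{Phi-nrho}, whose injecting slot is $\hat\tau h_{ab}$; smoothness of $\Ph$ up to the boundary then gives the extension of $h_{ab}$. (Two small corrections: the $h_{ab}$ component sits in the \emph{bottom} slot of $\Ph$, not the top one; and your justification of invertibility misidentifies the relevant quantity. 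The bottom slot of \eqref{h-nrho} is \emph{not} a multiple of the extension of $S$ --- what is a non--zero multiple of $S$ is the \emph{determinant} of $L(\tau^{-1}g^{ab})$, as computed in the proof of Proposition \ref{prop2.1}, equivalently the Schur complement in your block picture. This is easily repaired and still yields non--degeneracy of $L(\tau^{-1}g^{ab})$, hence existence and smoothness of $\Ph$, near $x$.)

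The genuine gap is in the non--degeneracy argument. You reduce non--degeneracy of $h_{ab}$ on $T_y\partial M$ to the claim that the boundary value of $\tfrac1\rho g^{ab}$ is non--degenerate in tangential directions (equivalently, that the block $(a^{ij})$, $i,j\geq 2$, is invertible at $\rho=0$), and you cite this as ``already recorded'' in the proof of Proposition \ref{prop2.2}. It is not: that proof only establishes the smooth extensions and the matrix structure $\rho^2a^{1j}$, $\rho a^{ij}$, never the invertibility of the tangential block. That invertibility is essentially the statement being proved, and moreover your final step (``inverting this matrix and tracking the powers of $\rho$'') presupposes it, so as written the crux of the argument is circular. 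There are two ways to close the gap. (a) The paper's way, using ingredients you already have: since $\Ph$ is the inverse of a bundle metric that is non--degenerate up to the boundary, $\Ph$ itself is non--degenerate along $\partial M$; there its top slot vanishes and its middle slot is a nowhere--zero multiple of $\hat\tau\rho_a$, and elementary linear algebra shows that a form with this structure is non--degenerate if and only if its bottom slot $\hat\tau h_{ab}$ is non--degenerate on $\ker\rho_a=T\partial M$. (b) Directly: comparing $\det(g^{ab})=\rho^{n+2}\bigl(a^{11}\det(a^{ij})+O(\rho)\bigr)$, which follows from the matrix structure, with the volume asymptotics $\det(g_{ab})=\rho^{-n-2}\hat\nu$ from Proposition 2.3 of \cite{Proj-comp} forces $a^{11}\det(a^{ij})\neq 0$ along $\partial M$, whence $(a^{ij})$ is invertible there. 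Either repair completes your proof; without one of them the non--degeneracy claim remains unproved.
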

\begin{proof}
Restricting to an appropriate open neighborhood $U$ of $x$, we may
assume that $L(\tau^{-1}g^{ab})\in\Ga(S^2\Cal T)$ is non--degenerate
as a bundle metric on $\Cal T^*$ and hence its inverse
$\Ph\in\Ga(S^2\Cal T^*)$ is a smooth non--degenerate bundle metric on
$\Cal T$. On $U\cap M$, we can work in the splitting associated to the Levi-Civita connection
$\nabla$ of $g$. The expression \eqref{h-nabla} for $L(\tau^{-1}g^{ab})$ from
the proof of Proposition \ref{prop2.1} then implies that in that
splitting we get 
$$
\Ph=\begin{pmatrix} (n+1)\tau (g^{ij}\Rho_{ij})^{-1} \\ 0\\ \tau
g_{ab}\end{pmatrix}.
$$ As in the proof of Proposition \ref{prop2.2}, we can now compute
the expression for $\Ph$ in the splitting associated to the connection
${}^\rho\nabla$, which is defined up to the boundary. All slots in
this expression then  admit smooth extensions to the boundary
and $\Ph$ must be non--degenerate, also along the boundary. We can
compute the change of splitting from formula (3.5) in Section 3.1 of
\cite{Proj-comp} using that $\Ups_a=\tfrac{1}{2\rho}\rho_a$. This
shows that in the splitting associated to ${}^\rho\nabla$, we get
\begin{equation}\label{Phi-nrho}
\Ph=\begin{pmatrix} \hat\tau\rho(n+1)(g^{ij}\Rho_{ij})^{-1}
\\ \hat\tau \tfrac{n+1}2 (g^{ij}\Rho_{ij})^{-1}\rho_a \\ 
\hat\tau(\rho
g_{ab}+\tfrac{n+1}{4\rho}(g^{ij}\Rho_{ij})^{-1}\rho_a\rho_b)
\end{pmatrix}.
\end{equation}
The bottom slot is $\hat\tau h_{ab}$, so we see that $h_{ab}$ admits a
smooth extension to the boundary. Along the boundary, the top slot
vanishes, while the middle slot becomes a non--zero multiple of
$\hat\tau\rho_a$. Non--degeneracy of $\Ph$ along the boundary is then
equivalent to the fact that $h_{ab}$ is non--degenerate on the kernel
of the middle slot, which coincides with $T\partial M\subset
TM|_{\partial M}$.  
\end{proof}

\subsection{Geodetic transversals}\label{2.4}
We next discuss a natural product structure along the boundary. This
can be done for affine connections which are projectively compact of
arbitrary order, so we temporarily work in this more general setting.

Suppose that $\nabla$ is a linear connection on $TM$, which is
projectively compact of some order $\al>0$, and that $\rho$ is a local
defining function for the boundary $\partial M$, defined on an open set
$U\subset\barm$. Then by definition the affine connection
$\nabla+\frac{d\rho}{\al\rho}$ defined on $U\cap M$ extends smoothly
to all of $U$, and we again denote this connection by ${}^\rho\nabla$.

\begin{definition}\label{def2.4}
A \textit{geodetic transversal} for $\rho$ is a smooth vector field
$\mu\in\frak X(U)$ such that ${}^\rho\nabla_\mu\mu=0$ (i.e.~the flow
lines of $\mu$ are geodesics for ${}^\rho\nabla$) and such that
$d\rho(\mu)$ is identically one on $U\cap\partial M$.
\end{definition}

\begin{lemma}\label{lem2.4}
(i) Given $U$ and $\rho$ and a vector field $\mu_0$ along
  $U\cap\partial M$ such that $d\rho(\mu_0)=1$ on $U\cap\partial M$,
  we can (possibly shrinking $U$) extend $\mu_0$ uniquely to a
  geodetic transversal for $\rho$.

(ii) If $\mu$ is any geodetic transversal for $\rho$ then for each
  point $x\in U\cap\partial M$ there is an open neighborhood $\tilde
  V$ of $x$ in $\barm$, a positive $\ep\in\Bbb R$, and a
  diffeomorphism $\tilde V\to [0,\ep)\x V$ where $V=\tilde
    V\cap\partial M$, which maps each $y\in V$ to $(0,y)$, and pulls
    back the coordinate vector field $\partial_t$ for the coordinate
    $t$ in $[0,\ep)$ to $\mu$.
\end{lemma}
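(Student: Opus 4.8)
The plan is to use the fact that, by projective compactness of order $\al$, the modified connection ${}^\rho\nabla$ is a genuine affine connection which is smooth up to and including $\partial M$. This places the entire statement within reach of the classical ODE theory of geodesics and flows, the only additional care being the boundary. To deal with the latter cleanly I would first work in a collar: since $\rho$ and the Christoffel symbols of ${}^\rho\nabla$ are all smooth up to the boundary, one may embed $U$ into a boundaryless manifold $\hat U$ carrying a smooth connection extending ${}^\rho\nabla$ and an extension of $\rho$ to the region $\rho<0$, perform all constructions on $\hat U$, and restrict to $\rho\geq 0$ at the end.

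For the existence part of (i) I would run a geodesic exponential construction. For each $y\in U\cap\partial M$ let $\gamma_y$ be the ${}^\rho\nabla$--geodesic with $\gamma_y(0)=y$ and $\gamma_y'(0)=\mu_0(y)$, and set $\Phi(t,y)=\gamma_y(t)$. Smooth dependence of geodesics on their initial data makes $\Phi$ smooth, and at $t=0$ its differential sends $\partial_t$ to $\mu_0(y)$ and the $\partial M$--directions identically to $T_y\partial M$; since $d\rho(\mu_0)=1\neq 0$, the vector $\mu_0(y)$ is transverse to $T_y\partial M=\ker d\rho$, so $d\Phi_{(0,y)}$ is an isomorphism. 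By the inverse function theorem $\Phi$ restricts to a diffeomorphism from $[0,\ep)\x V$ onto a neighborhood of $x$ (after shrinking $U$), and $\mu:=\Phi_*\partial_t$ is the desired geodetic transversal: its integral curves are precisely the $\gamma_y$, whence ${}^\rho\nabla_\mu\mu=0$, while $\mu|_{\partial M}=\mu_0$ gives $d\rho(\mu)=1$ along the boundary. For uniqueness, any geodetic transversal extending $\mu_0$ has the property that each of its flow lines is a ${}^\rho\nabla$--geodesic issuing from a boundary point $y$ with initial velocity $\mu_0(y)$; uniqueness of geodesics with prescribed position and velocity then forces these flow lines, hence $\mu$ itself, to agree with the ones just built on the neighborhood they sweep out.

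For (ii) I would observe that this is exactly the flow-box argument for a vector field transverse to the boundary, so it applies to an arbitrary geodetic transversal $\mu$ regardless of its origin. Writing $\Fl^\mu_t$ for the forward flow of $\mu$ and setting $\Phi(t,y)=\Fl^\mu_t(y)$ for $y\in V:=\tilde V\cap\partial M$, the condition $d\rho(\mu)=1$ on $\partial M$ again makes $d\Phi_{(0,y)}$ invertible, so $\Phi$ restricts to a diffeomorphism $[0,\ep)\x V\to\tilde V$ for suitable $\ep$ and $\tilde V$; that $\Phi_*\partial_t=\mu$ is the defining property of the flow. Because $d\rho(\mu)>0$ on the boundary, $\mu$ points into $M$, so its forward flow remains in $\barm$ for small $t>0$---which is why $[0,\ep)$ is the correct domain and no trajectory escapes through $\partial M$.

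The hard part here is not any one computation but the uniform handling of the boundary: existence, uniqueness and smooth dependence of geodesics, the inverse function theorem, and the flow-box theorem are all classical for boundaryless manifolds and must be seen to persist up to $\partial M$. The collar extension reduces every step to the interior case, and the transversality $d\rho(\mu_0)=1>0$ does double duty, both making the exponential/flow map a local diffeomorphism at the boundary and forcing the flow inward so that the geometry genuinely lives on the $\rho\geq 0$ side.
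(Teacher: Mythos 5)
Your proof is correct, and it takes a genuinely different route from the paper's. Both arguments share the same skeleton -- build the map $(t,y)\mapsto$ (endpoint of the curve through $y$ with initial direction $\mu_0(y)$), use the transversality $d\rho(\mu_0)=1$ to see its differential at $(0,y)$ is invertible, and apply the inverse function theorem -- but they secure smoothness and existence of geodesics up to the boundary by different devices. The paper never extends anything past $\partial M$: it lifts to the linear frame bundle $p:\Cal P\barm\to\barm$, where the geodesic flow becomes the flow $\Fl^X_t$ of a single standard horizontal vector field $X$ that is smooth up to the boundary of $\Cal P\barm$; smooth dependence on initial data is then just smoothness of one flow, forward existence follows since $X$ points inward over $\partial M$, and projecting flow lines gives the geodesics. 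You instead extend ${}^\rho\nabla$ and $\rho$ across $\partial M$ into a boundaryless collar and quote the classical exponential-map and flow-box theorems. That is simpler in that it avoids frame-bundle machinery, at the cost of justifying the collar extension; this is standard (extend the Christoffel symbols in boundary charts by Whitney/Seeley extension and patch with a partition of unity, which is legitimate since connections form an affine space), and since the lemma is local a single boundary chart in fact suffices; one should also remark that the restriction of the construction to $\rho\geq 0$ is independent of the chosen extension, since a geodesic issuing from the boundary with inward velocity lies in $M$ for $t>0$ and is determined there by the original connection. A further difference: for (ii) the paper reuses the geodesic construction (the flow lines of any geodetic transversal issuing from the boundary are exactly those geodesics, so the same diffeomorphism serves), whereas your flow-box argument uses only transversality and not the geodesic property at all, which is cleaner and slightly more general. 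One small imprecision in your uniqueness argument for (i): it is not evident a priori that \emph{every} flow line of a competing geodetic transversal issues from a boundary point; the correct phrasing is that, by (ii) applied to that transversal, its flow lines issuing from the boundary already sweep out a neighborhood of $U\cap\partial M$, and on that neighborhood the two fields agree by uniqueness of geodesics -- which is precisely the shrinking the lemma allows (the paper, for its part, leaves uniqueness entirely implicit).
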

\begin{proof}
Extend $\mu_0$ to a local smooth frame for $T\barm|_{\partial
  M}$. Denoting by $p:\Cal P\barm\to\barm$ the linear frame bundle of
$\barm$ and by $\th\in\Om^1(\Cal P\barm,\Bbb R^{n+1})$ its soldering
form, the frame defines a smooth map $s:U\cap\partial M\to \Cal
P\barm$ such that $p\o s=\id$.

Now ${}^\rho\nabla$ defines a principal connection on $\Cal P\barm$,
so we can talk about horizontal vector fields on $\Cal P\barm$ and
such a field is uniquely determined by its value under $\th$. In
particular, let $X\in\frak X(\Cal P\barm)$ be the horizontal vector
field whose value under $\th$ is always the first vector in the
standard basis of $\Bbb R^{n+1}$. This means that for any frame
$u\in\Cal P\barm$, $T_up\cdot X(u)$ is the first element in the frame
$u$, so in particular $T_{s(y)}p\cdot X(s(y))=\mu_0(y)$ for all $y\in
U\cap\partial M$.

Let us denote by $\Fl^X_t$ the flow of the vector field $X$, and
consider the map $(y,t)\mapsto p(\Fl^X_t(s(y)))$, which is defined and
smooth on an open neighborhood of $(U\cap\partial M)\x\{0\}$ in
$(U\cap\partial M)\x [0,\infty)$. Evidently, its tangent map in
  $(y,0)$ restricts to the identity on $T_y\partial M$ and maps
  $\partial_t$ to $\mu_0(y)$ so it is a linear isomorphism. Hence for
  any $y\in U\cap\partial M$, it restricts to a diffeomorphism on a
  set of the form $V\x [0,\ep)$ where $V\subset\partial M$ is an open
    neighborhood of $y$. Since the flow lines of $X$ in $\Cal P\barm$
    project to geodesics in $M$, we can define $\mu$ as the image of
    $\partial_t$ under this diffeomorphism to complete the proof of
    (i), and use the inverse of the diffeomorphism to complete the
    proof of (ii).
\end{proof}

\subsection{The asymptotic form}\label{2.5}
Returning to the setting of a pseudo--Riemannian metric $g$ which is
projectively compact of order $2$, we can next use a geodetic
transversal to complete the description of the asymptotic behavior of
the scalar curvature $S$ of $g$.

\begin{prop}\label{prop2.5}
Let $g$ be a pseudo--Riemannian metric on $M$ with scalar curvature
$S$ which is projectively compact of order $2$.  Let $x\in\partial M$
be a point in which the boundary value of $S$ is non--vanishing, let
$\rho$ be a local defining function for $\partial M$ which is defined
on an open neighborhood $U$ of $x\in\barm$ and suppose that $\mu$ is a
geodetic transversal for $\rho$ defined on $U$. Then we have:

(1) The function $\rho^2g(\mu,\mu)$ is constant along flow lines of
$\mu$ and hence admits a smooth extension to the boundary. The
boundary value of this extension equals the one of
$-\tfrac{n+1}4(g^{ij}\Rho_{ij})^{-1}$.

(2) The boundary value of $\rho^2g(\mu,\mu)$ is constant on a
neighborhood of $x$. 
\end{prop}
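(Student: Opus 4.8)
The plan is to deduce both claims from one observation: the rescaled field $\zeta:=\rho\mu$ is an \emph{affinely} parametrized geodesic for the Levi--Civita connection $\nabla$, and $g(\zeta,\zeta)=\rho^2g(\mu,\mu)$ is the conserved energy of this geodesic.

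For (1), I first unwind the geodetic condition. As ${}^\rho\nabla=\nabla+\tfrac1{2\rho}d\rho$, the equation ${}^\rho\nabla_\mu\mu=0$ says $\nabla_\mu\mu=-\tfrac1\rho d\rho(\mu)\mu$, and hence
$$
\nabla_\mu(\rho\mu)=d\rho(\mu)\mu+\rho\nabla_\mu\mu=0,\qquad\text{so}\qquad \nabla_\zeta\zeta=\rho\nabla_\mu(\rho\mu)=0.
$$
Since $\nabla g=0$, the function $g(\zeta,\zeta)=\rho^2g(\mu,\mu)$ is constant along the flow lines of $\mu$. In the coordinates $(t,y)$ of Lemma \ref{lem2.4}, in which $\mu=\partial_t$, this function therefore depends on $y$ alone; being smooth on the interior, it extends smoothly to the boundary by its value on any slice $\{t=t_0\}$. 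To identify the boundary value I contract the tensor $h_{ab}$ of Proposition \ref{prop2.3} with $\mu^a\mu^b$ and multiply by $\rho$, obtaining
$$
\rho^2g(\mu,\mu)=\rho\,h_{ab}\mu^a\mu^b-\tfrac{n+1}4(g^{ij}\Rho_{ij})^{-1}(d\rho(\mu))^2 .
$$
The first term carries an explicit factor $\rho$ and so vanishes at the boundary, while $d\rho(\mu)\to1$ there; this yields the asserted boundary value $-\tfrac{n+1}4(g^{ij}\Rho_{ij})^{-1}$.

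For (2), I would vary through these geodesics. Fix a curve $s\mapsto y(s)$ in $\partial M$ near $x$, take the $\mu$--flow lines issuing from the points $y(s)$, and reparametrize each by its affine parameter $\lambda$ for $\zeta$; write $\gamma_s(\lambda)$ for the resulting family and $J=\partial_s\gamma_s$ for the variation field, so that $[J,\zeta]=0$. By (1) the energy $g(\zeta,\zeta)=c(y(s))$ is independent of $\lambda$. Using $\nabla_\zeta\zeta=0$ and $\nabla_\zeta J=\nabla_J\zeta$,
$$
\tfrac{d}{d\lambda}g(J,\zeta)=g(\nabla_J\zeta,\zeta)=\tfrac12\,J\!\cdot\! g(\zeta,\zeta)=\tfrac12\,\tfrac{d}{ds}c(y(s)),
$$
which is constant in $\lambda$. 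Thus $\lambda\mapsto g(J,\zeta)$ is affine with slope $\tfrac12\tfrac{d}{ds}c(y(s))$. Moreover $\tfrac{d\rho}{d\lambda}=\zeta\cdot\rho=\rho\,d\rho(\mu)$ with $d\rho(\mu)\to1$, so $\rho\sim e^{\lambda}$ and the boundary sits at $\lambda\to-\infty$.

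It remains to show that $g(J,\zeta)$ has a finite limit as $\lambda\to-\infty$: an affine function of $\lambda$ with a finite limit at $-\infty$ is constant, so its slope vanishes and $c(y(s))$ is constant in $s$; as the curve and its direction were arbitrary, $c$ is locally constant near $x$. For the boundedness I would compare the two parametrizations. Writing $J=\tilde J+f\mu$, where $\tilde J$ is the variation field taken at fixed $\mu$--time $t$ (smooth up to the boundary and tangent to it there, so $d\rho(\tilde J)\to0$), one gets
$$
g(J,\zeta)=\rho\,g(\tilde J,\mu)+f\,\rho\,g(\mu,\mu).
$$
The first term is finite at the boundary by the asymptotic form of Proposition \ref{prop2.3}, and a short analysis of the reparametrization (from $\tfrac{dt}{d\lambda}=\rho$ one finds $f=O(\rho)$, while $\rho\,g(\mu,\mu)=O(1/\rho)$) makes the second term finite as well. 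I expect this last step --- controlling the variation field against the blow--up of $g$, i.e.\ the estimate $f=O(\rho)$ for the change between the affine and the $\mu$--flow parametrizations near $\lambda=-\infty$ --- to be the main technical obstacle; the rest is just the conservation law for the geodesic energy.
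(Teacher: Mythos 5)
Your part (1) is correct and coincides in substance with the paper's own argument: the geodetic condition gives $\nabla_\mu\mu=-\tfrac1\rho d\rho(\mu)\mu$, whence $\rho^2g(\mu,\mu)$ is constant along flow lines (your repackaging of this as energy conservation for the affine geodesic $\zeta=\rho\mu$ is a pleasant way to say the same thing), and the boundary value is then read off from Proposition \ref{prop2.3} exactly as in the paper.

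Part (2) is a genuinely different route --- variation through geodesics, rather than the paper's direct computation of $\xi\cdot(\rho^2g(\mu,\mu))$ for boundary--tangent $\xi$ --- but it has a genuine gap at the step you yourself flag, and the gap is not mere bookkeeping: the estimate $f=O(\rho)$ does not follow from the information you have put on the table, and the estimate that does follow is too weak for your argument. Concretely, write $r(s,t)=\rho(\Fl^\mu_t(y(s)))$, so the reparametrization satisfies $\partial_\lambda t=r(s,t)$, and $f=\partial_s t$ solves the linear ODE $\partial_\lambda f=(\partial_t r)f+B$ with $B=\partial_s r=d\rho(\tilde J)$ and $\partial_t r\to1$ at the boundary. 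Tangency of $\tilde J$ along $\partial M$ gives only $B=O(t)=O(e^{\lambda})$, and then variation of constants yields $f=O(|\lambda|e^{\lambda})=O(\rho\log(1/\rho))$, not $O(\rho)$. Since $\rho\,g(\mu,\mu)\sim ce^{-\lambda}$ with $c\neq0$, this bounds $f\rho\,g(\mu,\mu)$, and hence $g(J,\zeta)$, only by $O(|\lambda|)$; an affine function of $\lambda$ that is $O(|\lambda|)$ need not be constant, so you cannot conclude $c'(s)=0$ and the argument collapses exactly there. What is missing is the \emph{second--order} vanishing $B=O(t^2)$: since $\partial_t\partial_s r(s,0)=\partial_s\bigl(d\rho(\mu)(y(s))\bigr)=0$, because $d\rho(\mu)\equiv1$ along all of $\partial M$ (not just at one point), one gets $B=O(t^2)=O(e^{2\lambda})$, the ODE then gives $f=O(e^{\lambda})=O(\rho)$, and your scheme closes (boundedness of $g(J,\zeta)$ suffices; no limit is needed). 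This missing ingredient is precisely what the paper's proof of (2) turns on --- there it appears as the statement that $d\rho([\xi,\mu])$ vanishes along $\partial M$, obtained by expanding $0=d(d\rho)(\xi,\mu)$ --- after which the paper avoids ODE asymptotics entirely by showing directly, via Proposition \ref{prop2.3} and Leibniz--rule manipulations, that $\xi\cdot(\rho^2g(\mu,\mu))$ extends to the boundary with boundary value zero.
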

\begin{proof}
(1) On $U\cap M$, we compute 
$$
\mu\cdot (\rho^2g(\mu,\mu))=2\rho
d\rho(\mu)g(\mu,\mu)+2\rho^2g(\nabla_\mu\mu,\mu). 
$$
We can write
$\nabla_\mu\mu={}^\rho\nabla_\mu\mu-2\Ups(\mu)\mu=-\tfrac{1}{\rho}d\rho(\mu)\mu$,
and inserting this, we see that $\rho^2g(\mu,\mu)$ is constant along
flow lines of $\mu$. 

From Proposition \ref{prop2.3}, we know that $\rho
g_{ab}+\tfrac{n+1}{4\rho}(g^{ij}\Rho_{ij})^{-1} \rho_a\rho_b$ admits a smooth extension to
the boundary. Multiplying this by $\rho$, we obtain a tensor field
which is smooth up to the boundary and vanishes along the
boundary. Inserting two copies of $\mu$ into this tensor field we see
that 
$$
\rho^2g(\mu,\mu)+\tfrac{n+1}{4}(g^{ij}\Rho_{ij})^{-1}(d\rho(\mu))^2
$$
approaches zero at the boundary, and since $d\rho(\mu)$ equals one
along the boundary, the proof of (1) is complete. 

\smallskip

(2) Let $\xi=\xi^a$ be a vector field on $U$ (so $\xi$ is smooth up to
the boundary), such that $d\rho(\xi)$ vanishes identically. Then
Proposition \ref{prop2.3} immediately implies that for any vector field
$\eta\in\frak X(U)$, the function $\rho g(\xi,\eta)$ admits a smooth
extension to the boundary. Next, since $d\rho(\mu)$ equals one along
$\partial M$ and $\xi$ is tangent to $\partial M$ along $\partial M$,
so $\xi\cdot (d\rho(\mu))$ vanishes along $\partial
M$. Expanding $0=d(d\rho)(\xi,\mu)$ and using that $d\rho(\xi)=0$ we
conclude that $d\rho([\xi,\mu])$ vanishes along $\partial M$. Again
using Proposition \ref{prop2.3}, we conclude that also $\rho
g([\xi,\mu],\eta)$ admits a smooth extension to the boundary for each
$\eta\in\frak X(U)$. Armed with these observations, we now compute
$$
\xi\cdot (\rho^2g(\mu,\mu))=\rho^2\xi\cdot g(\mu,\mu)=2\rho^2
g(\nabla_\xi\mu,\mu)=2\rho^2g([\xi,\mu],\mu)+2\rho^2g(\nabla_\mu\xi,\mu). 
$$
From above we see that the first term in the right hand side admits a
smooth extension to the boundary with boundary value zero. The second
term on the right hand side can be written as
$$ 
2\rho^2 \mu\cdot g(\xi,\mu)-2\rho^2 g(\xi,\nabla_\mu\mu)=2\rho^2
\mu\cdot g(\xi,\mu)+2\rho d\rho(\mu)g(\xi,\mu),
$$ where we have used the expression for $\nabla_\mu\mu$ obtained
above. Rewriting the right hand side as $2\rho\mu\cdot (\rho
g(\xi,\mu))$ we see that also this terms admits a smooth extension to
the boundary with boundary value zero. Hence $\xi\cdot
(\rho^2g(\mu,\mu))$ vanishes along the boundary, and since we can
realize any vector field tangent to the boundary as a boundary value
in this way, we see that $\rho^2g(\mu,\mu)$ is locally constant along
the boundary.
\end{proof}

From this, we can readily deduce our first main result. 

\begin{thm}\label{thm2.5}
Let $g$ be a pseudo--Riemannian metric on $M$, which is projectively
compact of order $2$. Then we have

(1) The smooth extension $S$ of the scalar curvature of $g$ to all of
$\barm$ guaranteed by Proposition \ref{prop2.1} has a boundary value
which is locally constant and nowhere vanishing.

(2) Given a boundary point $x\in\partial M$ and a local defining
function $\rho$ for $\partial M$, then for the non--zero constant
$C=\tfrac{-n(n+1)}{4S(x)}$, the tensor field
$$
h_{ab}:=\rho g_{ab} - \tfrac{C}{\rho}\rho_a\rho_b
$$ admits a smooth extension to the boundary with its boundary values
being non--degenerate as bilinear forms on $T\partial M$.
\end{thm}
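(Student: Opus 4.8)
The plan is to deduce both assertions from the work already done, principally Propositions \ref{prop2.2}, \ref{prop2.3} and \ref{prop2.5}, together with the standard projective normalization $g^{ij}\Rho_{ij}=\tfrac1n S$ valid in dimension $n+1$ (so that $(g^{ij}\Rho_{ij})^{-1}=n/S$ wherever $S\neq0$). For part (1) I would first record that, by Proposition \ref{prop2.2}, the boundary value of $S$ is non--zero on a dense open subset $U_0\subset\partial M$. At any point of $U_0$, Proposition \ref{prop2.5} applies: its part (1) identifies the boundary value of $\rho^2g(\mu,\mu)$ with that of $-\tfrac{n+1}4(g^{ij}\Rho_{ij})^{-1}=-\tfrac{n(n+1)}{4S}$, and its part (2) says this boundary value is locally constant. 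Hence $1/S$, and therefore $S$ itself, is locally constant along $\partial M$ near each point of $U_0$; equivalently the differential of the boundary function $S|_{\partial M}$ vanishes on $U_0$. Since $S$ is smooth up to the boundary (Proposition \ref{prop2.1}), this differential is continuous, so it must vanish on $\overline{U_0}=\partial M$, whence $S|_{\partial M}$ is locally constant on all of $\partial M$. Finally, each connected component of $\partial M$ is open and meets the dense set $U_0$, on which $S\neq0$; being constant on that component, $S$ is a non--zero constant there. This yields local constancy and nowhere--vanishing simultaneously.

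For part (2), fix $x$ and let $S(x)$ denote the (now constant) boundary value of $S$ near $x$, so that $C=-\tfrac{n(n+1)}{4S(x)}$ and $-C/\rho=\tfrac{n(n+1)}{4\rho S(x)}$. The idea is to compare the tensor $h_{ab}=\rho g_{ab}-\tfrac{C}{\rho}\rho_a\rho_b$ of the theorem with the tensor of Proposition \ref{prop2.3}, which in the present notation reads $\tilde h_{ab}:=\rho g_{ab}+\tfrac{n(n+1)}{4\rho S}\rho_a\rho_b$ and is already known to extend smoothly to the boundary and to be non--degenerate on $T_y\partial M$. Their difference is
$$
h_{ab}-\tilde h_{ab}=\tfrac{n(n+1)}4\,\tfrac{1}{\rho}\Big(\tfrac{1}{S(x)}-\tfrac1S\Big)\rho_a\rho_b
=\tfrac{n(n+1)}4\,\tfrac{S-S(x)}{\rho\,S\,S(x)}\,\rho_a\rho_b .
$$
By part (1) the smooth function $S-S(x)$ vanishes identically along $\partial M$, so by Hadamard's lemma it is divisible by $\rho$ with smooth quotient; hence $\tfrac{1}{\rho}(S-S(x))$ is smooth up to the boundary, and as $S,S(x)$ are non--zero near $x$ the whole coefficient of $\rho_a\rho_b$ is smooth up to the boundary. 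Therefore $h_{ab}$ differs from $\tilde h_{ab}$ by a term smooth up to the boundary, so $h_{ab}$ itself extends smoothly.

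It then remains to check non--degeneracy on $T\partial M$. The key observation is that the correction term $h_{ab}-\tilde h_{ab}$ is a multiple of $\rho_a\rho_b$, and $\rho_a=d\rho$ annihilates vectors tangent to $\partial M$; thus this correction restricts to zero on $T_y\partial M\times T_y\partial M$. Consequently the boundary values of $h_{ab}$ and of $\tilde h_{ab}$ agree as bilinear forms on $T_y\partial M$, and the non--degeneracy of $\tilde h_{ab}$ furnished by Proposition \ref{prop2.3} transfers to $h_{ab}$.

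The main obstacle is the globalization step in part (1): Propositions \ref{prop2.2} and \ref{prop2.5} only control $S$ where it is already known to be non--zero, and the passage from ``locally constant on the dense set $U_0$'' to ``locally constant and nowhere vanishing on all of $\partial M$'' is what must be argued carefully. The clean route is to notice that local constancy means the differential of $S|_{\partial M}$ vanishes on $U_0$, and then to use continuity of that differential (i.e.\ smoothness of $S$ up to the boundary) to propagate the vanishing to the closure $\partial M$. Everything in part (2) is then bookkeeping resting on the decisive structural fact that replacing the interior function $(g^{ij}\Rho_{ij})^{-1}$ by its constant boundary value $n/S(x)$ perturbs $h_{ab}$ only in the $\rho_a\rho_b$ direction, which is invisible to the boundary geometry.
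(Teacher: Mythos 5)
Your proof is correct and follows essentially the same route as the paper's: part (1) combines Propositions~\ref{prop2.1}, \ref{prop2.2} and~\ref{prop2.5} and then globalizes over $\partial M$, and part (2) reduces to Proposition~\ref{prop2.3} via the identity $g^{ij}\Rho_{ij}=\tfrac{1}{n}S$. The only difference is that you make explicit two steps the paper treats as immediate, namely the passage from ``locally constant on a dense open subset'' to ``locally constant on all of $\partial M$'' via continuity of $d(S|_{\partial M})$, and the Hadamard--lemma argument showing that replacing $1/S$ by the constant $1/S(x)$ changes $h_{ab}$ only by a term that is smooth up to the boundary and invisible on $T\partial M$; both of these are valid and, if anything, tighten the paper's exposition.
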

\begin{proof}
(1) From Propositions \ref{prop2.1} and \ref{prop2.5}, we know that
  $S$ is smooth up to the boundary, and that $S|_{\partial M}$ is
  non--vanishing and locally constant on a dense open subset of
  $\partial M$. This is only possible if the constant values on
  connected components of this open set with intersecting closures
  match up, and hence $S$ extends to a locally constant function on
  $\partial M$. But of course all the constant values are non--zero,
  so $S$ is nowhere vanishing. 

(2) In our convention for dimensions, we have
  $\Rho_{ab}=\tfrac1n R_{ab}$, where $R_{ab}:=R_{d a}{}^d{}_b$ (is the Ricci
curvature) and hence
  $g^{ij}\Rho_{ij}=\tfrac{1}nS$, so the claim follows immediately from
  Proposition \ref{prop2.3}. 
\end{proof}

\section{Boundary geometry and curvature asymptotics}\label{3}
A projectively compact connection on the interior of a manifold with
boundary induces a projective structure on the whole manifold. As a
hypersurface in a projective manifold, the boundary inherits the
so--called projective second fundamental form, a conformal class of
bilinear forms on the tangent spaces to the boundary. After making
these observations, our main aim in this section is to relate this
structure on the boundary to data on the interior. We first do this
for general projectively compact affine connections, showing that the
projective second fundamental form is related to the asymptotics of
the Schouten tensor. This leads to results on the asymptotic form of
the curvature of a projectively compact connection.

With these results established, we then turn our attention to
pseudo--Riemannian metrics admitting an asymptotic form from a family
(depending on $\al\in (0,2]$) identified in \cite{Proj-comp}; in that
  source it is shown that these asymptotic forms are sufficient for
  projective compactness. We find that the possible extrinsic boundary
  geometry depends on the parameter $\alpha$, which gives the order of
  projective compactness.  If the order of projective compactness is
  less than two, then the projective second fundamental form
  necessarily vanishes, so the boundary is totally geodesic. On the
  other hand, in the case of order two there is no such a prior
  restriction on the projective second fundamental form and we obtain
  an explicit description of this object.  Note that Theorem
  \ref{thm2.5} states that for metrics that are projectively compact
  of order 2 the asymptotic form is always available. So the results
  apply generally in this case.
The explicit formula description found implies, in particular, that the projective second
  fundamental form is always non--degenerate, in the order two case, and hence defines a
  canonical conformal structure on the boundary. Finally, the relation
  to the Schouten tensor is used to prove that any such metric
  satisfies an asymptotic form of the Einstein equation.

\subsection{The induced geometry on the boundary}\label{3.1}
Suppose that $\barm=M\cup\partial M$ is a smooth manifold with
boundary and that $\nabla$ is an affine connection on $M$ which is
projective compact of some order $\al\in (0,2]$. Let us recall the
construction of the projectively invariant second fundamental form for
the extended projective structure.

Choose a local defining function $\rho$ for $\partial M$, let
$\hat\nabla$ be any connection in the projective class which is smooth
up to the boundary and consider $\hat\nabla
d\rho\in\Ga(S^2T\barm)$. Writing again $\rho_a$ for $d\rho$, we see
that for a projectively equivalent connection $\tilde\nabla$, we get
$\tilde\nabla_a\rho_b=\hat\nabla_a\rho_b-\Ups_a\rho_b-\Ups_b\rho_a$,
so $\hat\nabla_a\rho_b$ and $\tilde\nabla_a\rho_b$ have the same
restriction to $T\partial M\x T\partial M$. On the other hand,
changing the defining function $\rho$ to $\tilde\rho=e^f\rho$, we get
$\tilde\rho_a=\tilde\rho f_a+e^f\rho_a$, where $f_a=df$, and thus
$$
\hat\nabla_a\tilde\rho_b=\tilde\rho_af_b+\tilde\rho\hat\nabla_af_b+
e^ff_a\rho_b+e^f\hat\nabla_a\rho_b. 
$$ 
Hence the restriction of $\hat\nabla_a\tilde\rho_b$ to $T\partial M\x
T\partial M$ is conformal to the restriction of $\hat\nabla_a\rho_b$.
So the (possibly degenerate) conformal class $[\hat\nabla_a\rho_b]$ on
$T\partial M$ is canonical. We will say that   
any positive constant multiple of $\hat\nabla_a\rho_b$ is  a
representative of the \textit{projective second fundamental form}.

By construction, the projective second fundamental form only depends
on the extended projective structure on the manifold $\barm$ with
boundary, and not on the specific projectively compact connection
$\nabla$ on $M$.  It turns out, however, that there is a nice relation
to the projectively compact connection on the interior.

\begin{prop}\label{prop3.1}
Let $\nabla$ be a linear connection on $TM$ which is projectively
compact of some order $\al\in (0,2]$, and let $\Rho_{ab}$ be the
  Schouten tensor of $\nabla$.

Then, for any local defining function $\rho$ for $\partial M$ the
smooth section
$\rho\Rho_{ab}+\frac{\al-1}{\al^2}\frac{\rho_a\rho_b}{\rho}$ admits a
smooth extension to the boundary and its boundary value restricts to a
representative of the projective second fundamental form on $T\partial
M$.
\end{prop}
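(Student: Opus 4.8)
The plan is to use the distinguished connection ${}^\rho\nabla=\nabla+\Ups$ with $\Ups_a=\tfrac{1}{\al\rho}\rho_a$ simultaneously as the smooth representative $\hat\nabla$ entering the definition of the projective second fundamental form and as the connection against which we compare the Schouten tensor. Since $\nabla$ is projectively compact of order $\al$, this connection is smooth up to the boundary; hence so are its curvature, its Ricci tensor, and thus its Schouten tensor $\hat\Rho_{ab}$, and so is ${}^\rho\nabla_a\rho_b=\hat\nabla_a\rho_b$. The only input I would invoke is the standard projective transformation law for the Schouten tensor under $\hat\nabla=\nabla+\Ups$, namely
$$
\hat\Rho_{ab}=\Rho_{ab}-\nabla_a\Ups_b+\Ups_a\Ups_b.
$$

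Next I would substitute $\Ups_a=\tfrac{1}{\al\rho}\rho_a$. Using $\nabla_a\tfrac1\rho=-\tfrac{1}{\rho^2}\rho_a$ one gets $\nabla_a\Ups_b=-\tfrac{1}{\al\rho^2}\rho_a\rho_b+\tfrac{1}{\al\rho}\nabla_a\rho_b$, while $\Ups_a\Ups_b=\tfrac{1}{\al^2\rho^2}\rho_a\rho_b$. To bring in the second fundamental form I rewrite the derivative in terms of $\hat\nabla$ via $\nabla_a\rho_b=\hat\nabla_a\rho_b+\Ups_a\rho_b+\Ups_b\rho_a=\hat\nabla_a\rho_b+\tfrac{2}{\al\rho}\rho_a\rho_b$. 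Solving the transformation law for $\Rho_{ab}$ and collecting the $\rho_a\rho_b$ terms, whose coefficients combine as $-\tfrac1{\al\rho^2}+\tfrac{2}{\al^2\rho^2}-\tfrac1{\al^2\rho^2}=\tfrac{1-\al}{\al^2\rho^2}$, yields, after multiplying through by $\rho$, the identity
$$
\rho\Rho_{ab}+\tfrac{\al-1}{\al^2}\tfrac{\rho_a\rho_b}{\rho}=\rho\hat\Rho_{ab}+\tfrac1\al\hat\nabla_a\rho_b.
$$

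The conclusion is then immediate. The right-hand side is manifestly smooth up to the boundary, since both $\hat\Rho_{ab}$ and $\hat\nabla_a\rho_b$ are, which gives the asserted smooth extension of the left-hand side. Evaluating along $\partial M$, the term $\rho\hat\Rho_{ab}$ vanishes, so the boundary value equals $\tfrac1\al\hat\nabla_a\rho_b|_{\partial M}$; its restriction to $T\partial M\x T\partial M$ is the positive constant multiple $\tfrac1\al$ of the representative $\hat\nabla_a\rho_b$, hence itself a representative of the projective second fundamental form. This is legitimate because ${}^\rho\nabla$ is a connection in the projective class that is smooth up to the boundary, so it is an admissible choice of $\hat\nabla$, and the restriction to $T\partial M$ is independent of this choice as established above. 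The computation is otherwise routine; the only points needing care are the bookkeeping of the three $\rho_a\rho_b/\rho^2$ contributions, which must collapse to exactly $\tfrac{\al-1}{\al^2}\tfrac{\rho_a\rho_b}\rho$, and the sign convention in the Schouten transformation law. The real content is not so much an obstacle as the observation that the distinguished connection ${}^\rho\nabla$ both realizes the second fundamental form and serves as the natural comparison connection, which is what makes the weighted Schouten combination extend smoothly.
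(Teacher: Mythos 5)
Your proposal is correct and follows essentially the same route as the paper: both use the distinguished connection $\hat\nabla={}^\rho\nabla$ (smooth up to the boundary) together with the projective transformation law for the Schouten tensor, and both arrive at the identity $\rho\Rho_{ab}+\tfrac{\al-1}{\al^2\rho}\rho_a\rho_b=\tfrac1\al\hat\nabla_a\rho_b+\rho\hat\Rho_{ab}$, from which smoothness of the extension and the identification of the boundary value as a positive multiple of $\hat\nabla_a\rho_b$ follow. The only cosmetic difference is that the paper writes the transformation law in the direction $\Rho_{ab}=\hat\Rho_{ab}+\hat\nabla_a\Ups_b+\Ups_a\Ups_b$, which avoids your intermediate step of converting $\nabla_a\rho_b$ into $\hat\nabla_a\rho_b$; your bookkeeping of the $\rho_a\rho_b/\rho^2$ terms is nonetheless accurate.
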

\begin{proof}
Let $\hat\nabla={}^\rho\nabla$ be the projective modification of
$\nabla$ associated to $\rho$. This means that
$\hat\nabla_a=\nabla_a+\Ups_a$, with $\Ups_a:=\frac{\rho_a}{\al\rho}$,
admits a smooth extension to the boundary. Then of course the Schouten
tensor $\hat\Rho_{ab}$ of $\hat\nabla$ is smooth up to the
boundary. The relation between $\Rho_{ab}$ and $\hat\Rho_{ab}$ from
\cite{BEG} reads as
$$
\Rho_{ab}=\hat\Rho_{ab}+\hat\nabla_a\Ups_b+\Ups_a\Ups_b.
$$
Now
$\hat\nabla_a(\tfrac1{\al\rho}\rho_b)=-\tfrac1{\al\rho^2}\rho_a\rho_b+
\frac1{\al\rho}\hat\nabla_a\rho_b$. On the other hand,
$\Ups_a\Ups_b=\frac1{\al^2\rho^2}\rho_a\rho_b$, and inserting this, we
get that 
$$
\Rho_{ab}=\hat\Rho_{ab}+\tfrac1{\al\rho}\hat\nabla_a\rho_b-
\tfrac{\al-1}{\al^2\rho^2}\rho_a\rho_b
$$
and thus 
\begin{equation}\label{Rhoasymp}
\rho\Rho_{ab}+\tfrac{\al-1}{\al^2\rho}\rho_a\rho_b=\tfrac1{\al}\hat\nabla_a\rho_b+
\rho\hat\Rho_{ab}. 
\end{equation}
Since, the right hand side is evidently smooth up to the boundary, with
boundary value $\tfrac1{\al}\hat\nabla_a\rho_b$, the result follows.
\end{proof}

\subsection{Curvature asymptotics}\label{3.3}
We next prove a general result on the asymptotic behavior of the
curvature of a connection which is projectively compact of some order
$\al\in (0,2]$. This is similar to the fact that conformally compact
    pseudo--Riemannian metrics are asymptotically hyperbolic, see
for example    \cite{Graham:Srni}.

To formulate the result, recall that from each symmetric
$\binom02$--tensor field, one can build up a tensor having curvature
symmetries by putting
$R_{ab}{}^c{}_d:=\delta^c_a\ph_{bd}-\delta^c_b\ph_{ad}$. In
particular, we can apply this to $\binom02$--tensor fields which have
rank one, i.e.~are of the form $\ph_{ab}=\ps_a\ps_b$ for a one--form
$\ps=\ps_a$. In this case we call the corresponding curvature tensor
the \textit{rank--one curvature tensor determined by} $\ps$.

\begin{prop}\label{prop3.3}
Let $\nabla$ be a linear connection on $TM$ which is projectively
compact of some order $\al\in (0,2]$, let $R=R_{ab}{}^c{}_d$ be
  the curvature tensor of $\nabla$. Let $\rho$ be a local defining
  function for $\partial M$ and let
  $\hat\nabla=\nabla+\tfrac{d\rho}{\al\rho}$ be the associated
  connection in the projective class.  

(i) If $\al=1$, then $\rho R$ is admits a smooth extension to the
  boundary with boundary value
$$ 
\delta^c_a\hat\nabla_b\rho_d-\delta^c_b\hat\nabla_a\rho_d.
$$ 

(ii) If $\al\neq 1$, then $\rho^2R$ admits a smooth extension to the
boundary with boundary value equal to $\frac{1-\al}{\al^2}$ times the
rank--one curvature tensor determined by the one--form $d\rho$.
\end{prop}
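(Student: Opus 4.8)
The plan is to reduce the statement about the curvature $R$ of $\nabla$ to the curvature $\hat R$ of the smooth connection $\hat\nabla={}^\rho\nabla$, whose existence and smoothness up to the boundary is exactly what projective compactness of order $\al$ provides. First I would recall the standard projective transformation law relating the curvature tensors of two connections $\hat\nabla=\nabla+\Ups$ in the same projective class. Since $\hat\nabla$ is smooth up to the boundary, its curvature $\hat R$ is smooth up to the boundary, so all the singular behaviour of $R$ as $\rho\to 0$ must come from the terms involving $\Ups_a=\tfrac{\rho_a}{\al\rho}$. With $\Ups_a=\tfrac1{\al\rho}\rho_a$, the transformation law expresses $R$ in terms of $\hat R$ plus terms built from $\hat\nabla_a\Ups_b$ and $\Ups_a\Ups_b$. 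The key computation, which is essentially the curvature-level analogue of the identity in the proof of Proposition \ref{prop3.1}, is that
$$
\hat\nabla_a\Ups_b=-\tfrac1{\al\rho^2}\rho_a\rho_b+\tfrac1{\al\rho}\hat\nabla_a\rho_b,
\qquad
\Ups_a\Ups_b=\tfrac1{\al^2\rho^2}\rho_a\rho_b,
$$
so that the most singular contributions to $R$ are of order $\rho^{-2}$ and are proportional to $\rho_a\rho_b$, while the remaining corrections are of order $\rho^{-1}$.

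Assembling these into the curvature symmetrization $R_{ab}{}^c{}_d=\delta^c_a\ph_{bd}-\delta^c_b\ph_{ad}$ (the structure that the transformation law produces), I would track the coefficient of each power of $\rho$. The $\rho^{-2}$ terms combine the $\hat\nabla_a\Ups_b$ and $\Ups_a\Ups_b$ contributions; collecting their $\rho_a\rho_b$ parts gives a coefficient $\tfrac{1}{\al^2}-\tfrac1\al=\tfrac{1-\al}{\al^2}$ multiplying the rank--one tensor determined by $d\rho$. This is precisely where the two cases split. If $\al=1$ the $\rho^{-2}$ coefficient vanishes identically, so $R$ is only of order $\rho^{-1}$ and $\rho R$ extends smoothly, with boundary value coming from the surviving $\tfrac1{\al\rho}\hat\nabla_a\rho_b=\tfrac1\rho\hat\nabla_a\rho_b$ term; this yields $\delta^c_a\hat\nabla_b\rho_d-\delta^c_b\hat\nabla_a\rho_d$ after symmetrization, giving (i). If $\al\neq 1$ the $\rho^{-2}$ term genuinely dominates, so one multiplies by $\rho^2$: the $\rho^{-1}$ corrections and the smooth $\hat R$ are then killed in the boundary limit, leaving exactly $\tfrac{1-\al}{\al^2}$ times the rank--one curvature tensor determined by $d\rho$, which is (ii).

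The routine part is the bookkeeping in the transformation law and the symmetrization; the only genuine subtlety is making sure that \emph{all} the lower-order ($\rho^{-1}$) pieces really are smooth up to the boundary after the appropriate multiplication, rather than merely the specific combination one expects. The main obstacle I anticipate is controlling the term $\hat\nabla_a\rho_b$: this is smooth up to the boundary (being $\hat\nabla d\rho$ for the smooth connection $\hat\nabla$), which is exactly what lets the $\rho^{-1}$ contribution behave correctly. Once one confirms that every coefficient appearing is either manifestly smooth or a smooth multiple of $\rho_a\rho_b$, the smooth extensions of $\rho R$ (resp.\ $\rho^2R$) and their stated boundary values follow directly from taking $\rho\to 0$ in the two cases.
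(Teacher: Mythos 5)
Your proposal is correct, and its computational core --- the identities $\hat\nabla_a\Ups_b=-\tfrac1{\al\rho^2}\rho_a\rho_b+\tfrac1{\al\rho}\hat\nabla_a\rho_b$ and $\Ups_a\Ups_b=\tfrac1{\al^2\rho^2}\rho_a\rho_b$, giving the coefficient $\tfrac1{\al^2}-\tfrac1\al=\tfrac{1-\al}{\al^2}$ on the singular $\rho_a\rho_b$ part --- is exactly what drives the paper's argument too. The organization is genuinely different, however. The paper never writes down the transformation law for the full curvature: it instead uses the projective decomposition $R_{ab}{}^c{}_d=C_{ab}{}^c{}_d+\delta^c_a\Rho_{bd}-\delta^c_b\Rho_{ad}+\be_{ab}\delta^c_d$, observes that the Weyl part $C_{ab}{}^c{}_d$ is projectively invariant and hence smooth up to the boundary (since the projective structure extends to $\barm$), disposes of $\be_{ab}$ via $\be_{ab}=\hat\be_{ab}$, and then simply quotes the Schouten asymptotics of Proposition \ref{prop3.1} (whose proof contains precisely your formula for $\hat\nabla_a\Ups_b$). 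Your version takes the curvature $\hat R$ of the extended connection ${}^\rho\nabla$ as the smooth reference object in place of the Weyl tensor; this is more self-contained (no appeal to the Weyl/Schouten/$\be$ decomposition or to Proposition \ref{prop3.1}), at the cost of redoing the Schouten-type computation inline, whereas the paper's route makes visible that the whole boundary behaviour is governed by the Schouten tensor alone. One point you should make explicit: the projective transformation law for the curvature also contains a term $(\hat\nabla_b\Ups_a-\hat\nabla_a\Ups_b)\delta^c_d$ --- the counterpart of the paper's $\be_{ab}\delta^c_d$ --- which is not of the form $\delta^c_a\ph_{bd}-\delta^c_b\ph_{ad}$ that you assert the law produces. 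It vanishes here precisely because your displayed formula shows $\hat\nabla_a\Ups_b$ is symmetric (equivalently, $\Ups=\tfrac1\al\, d\log\rho$ is closed and $\hat\nabla$ is torsion free; this is the same fact the paper invokes when concluding $\be_{ab}=\hat\be_{ab}$). With that observation added, your argument is complete and yields (i) and (ii) exactly as stated.
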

\begin{proof}
The decomposition of the curvature tensor used in projective geometry,
see section 3.1 of \cite{BEG}, reads as 
$$ 
R_{ab}{}^c{}_d=C_{ab}{}^c{}_d+\delta^c_a\Rho_{bd}-\delta^c_b\Rho_{ad}+
\be_{ab}\delta^c_d. 
$$ Here $C_{ab}{}^c{}_d$ is the projective Weyl curvature, $\Rho_{ab}$
is the projective Schouten tensor and $\be_{ab}=\Rho_{ba}-\Rho_{ab}$
(so this vanishes for connections preserving a volume density). Now
the projective Weyl curvature is projectively invariant, so since the
projective structure extends smoothly to $\barm$, $C_{ab}{}^c{}_d$
admits a smooth extension to the boundary.

We have analyzed the behavior of $\Rho_{ab}$ in Proposition
\ref{prop3.1}. If $\al=1$, then
$\rho\Rho_{ab}=\hat\nabla_a\rho_b+\rho\hat\Rho_{ab}$, where
$\hat\Rho_{ab}$ is the Schouten tensor of $\hat\nabla_a$. Of course
$\hat\Rho_{ab}$ is smooth up to the boundary, and we conclude that,
$\hat\be_{ab}=\be_{ab}$, so $\be_{ab}$ is smooth up to the
boundary. This completes the proof of (i).

(ii) If $\al\neq 1$, then Proposition \ref{prop3.1} shows that
$\rho^2\Rho_{ab}$ admits a smooth extension to the boundary with
boundary value $\frac{1-\al}{\al^2}\rho_a\rho_b$, so again the result
follows.
\end{proof}

\subsection{Projectively compact pseudo--Riemannian metrics and
  asymptotic forms}\label{3.4} 

The asymptotic form for a metric which
is projectively compact of order two derived in Theorem
\ref{thm2.5} is a special case of an asymptotic form (depending on
$\al$) introduced in Section 2.4 of \cite{Proj-comp}. There we have
proved that such an asymptotic form for $g$ implies projective
compactness of order $\al$ for any fixed $\al\in (0,2]$ such that
$\frac2\al$ is an integer. We next specialize the results on
the boundary conformal structure and on curvature asymptotics to
metrics admitting such an asymptotic form. 

The assumptions for this asymptotic form is that locally around each
boundary point, we find a defining function $\rho$ and a nowhere
vanishing smooth function $C$ with additional properties specified
below, such that the $\binom02$--tensor field
\begin{equation}\label{haldef}
h:=\rho^{2/\al} g-C\frac{d\rho^2}{\rho^{2/\al}}
\end{equation}
admits a smooth extension to the boundary, with the boundary value
being non--degenerate on $T\partial M$. The additional property
required from $C$ is that for each vector field $\zeta$ which is
smooth up to the boundary and satisfies $d\rho(\zeta)=0$, the function
$\rho^{-2/\al}\zeta\cdot C$ admits a smooth extension to the boundary.
Theorem 2.6 of \cite{Proj-comp} then states that under these
assumptions (including $2/\al\in\Bbb Z$), the Levi--Civita connection
of $g$ is projectively compact of order $\al$. For $\al=2$, Theorem
\ref{thm2.5} shows that we always get the asymptotic form with a
constant $C$, so we will pay special attention to this case.

\begin{prop}\label{prop3.2}
Suppose that we are in the setting of Theorem 2.6 of \cite{Proj-comp},
i.e.~$\frac2\al\in\Bbb Z$ and the tensor field $h$ defined in
\eqref{haldef} is smooth up to the boundary with the boundary value
being non--degenerate on $T\partial M$.

(i) If $\al<2$, then the projective second fundamental form for
$\partial M$ vanishes identically, so $\partial M$ is totally
geodesic. 

(ii) If $\al=2$, then the restriction of $h$ to boundary directions is
a representative of the projective second fundamental form for
$\partial M$. Furthermore, if $C$ is constant then the boundary value
of $h$ coincides with $-2C \hat\nabla d\rho$, where
$\hat\nabla=\nabla+\frac{d\rho}{2\rho}$ is the projective
modification, associated to $\rho$, of the Levi--Civita connection
$\nabla$ of $g$.
\end{prop}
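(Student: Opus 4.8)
The plan is to read off the projective second fundamental form directly from its definition in Section~\ref{3.1}, taking as the representative connection $\hat\nabla$ the projectively compact connection ${}^\rho\nabla=\nabla+\tfrac{d\rho}{\al\rho}$, which is smooth up to the boundary by Theorem~2.6 of \cite{Proj-comp}. A representative of the projective second fundamental form is then the restriction to $T\partial M$ of $\hat\nabla_a\rho_b=\nabla_a\rho_b-\tfrac{2}{\al\rho}\rho_a\rho_b$. Since $d\rho$ annihilates $T\partial M$, the modification term $-\tfrac{2}{\al\rho}\rho_a\rho_b$ drops out on boundary directions, so the entire question reduces to the boundary asymptotics of the $g$--Hessian $\nabla_a\nabla_b\rho=\nabla_a\rho_b$ restricted to $T\partial M$.

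To compute this I would work in adapted coordinates $(x^0,\dots,x^n)$ with $x^0=\rho$, so that $\rho_a=\delta^0_a$ and $\nabla_a\rho_b=-\Gamma^0_{ab}$ in terms of the Christoffel symbols of $g$. Writing $\beta:=2/\al$, the asymptotic form \eqref{haldef} reads $g_{00}=\rho^{-\beta}h_{00}+C\rho^{-2\beta}$, $g_{0i}=\rho^{-\beta}h_{0i}$ and $g_{ij}=\rho^{-\beta}h_{ij}$, with $h_{ab}$ smooth up to the boundary and the tangential block $h_{ij}$ invertible near $\partial M$. A Schur--complement computation gives the inverse asymptotics $g^{00}=\tfrac{\rho^{2\beta}}{C}+O(\rho^{3\beta})$, $g^{0i}=O(\rho^{2\beta})$ and $g^{ij}=\rho^{\beta}h^{ij}+O(\rho^{2\beta})$, with $h^{ij}$ the inverse of the tangential block. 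Substituting into $\Gamma^0_{ij}=\tfrac12 g^{0d}(\partial_ig_{jd}+\partial_jg_{id}-\partial_dg_{ij})$, the dominant contribution is $\tfrac12 g^{00}(-\partial_0g_{ij})$, which equals $\tfrac{1}{\al C}\rho^{\beta-1}h_{ij}$ to leading order, while every remaining term is $O(\rho^{\beta})$. Passing to the boundary splits the two cases: for $\al<2$ one has $\beta-1>0$, so the boundary value of $\Gamma^0_{ij}$ on $T\partial M$ vanishes, hence $\hat\nabla_i\rho_j|_{\partial M}=0$ on $T\partial M$ and the projective second fundamental form vanishes identically, so $\partial M$ is totally geodesic, giving (i); for $\al=2$ one has $\beta=1$ and $\Gamma^0_{ij}|_{\partial M}=\tfrac{1}{2C}h_{ij}$, hence $\hat\nabla_i\rho_j|_{\partial M}=-\tfrac{1}{2C}h_{ij}$, so that $h|_{T\partial M}$ is a nonzero constant multiple of the representative $\hat\nabla_a\rho_b|_{T\partial M}$, which is the first assertion of (ii).

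For the final statement of (ii), with $C$ constant, I would prove the full tensor identity $h_{ab}|_{\partial M}=-2C\,\hat\nabla_a\rho_b|_{\partial M}$ by also computing $\Gamma^0_{0i}$ and $\Gamma^0_{00}$; constancy of $C$ enters precisely to kill the tangential and normal derivatives of the $C\rho^{-2\beta}$ term in $g_{00}$. One finds $\Gamma^0_{0i}|_{\partial M}=\tfrac{1}{2C}h_{0i}$, giving $h_{0i}|_{\partial M}=-2C\,\hat\nabla_0\rho_i|_{\partial M}$ as before. The delicate component is $\Gamma^0_{00}$: its leading term $\tfrac12 g^{00}\partial_0g_{00}$ diverges like $-\rho^{-1}$, but this is exactly the contribution removed by the projective modification, since $\hat\nabla_0\rho_0=-\Gamma^0_{00}-\tfrac{1}{\rho}$. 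I expect this cancellation to be the main obstacle, since extracting the finite boundary value requires the subleading $O(\rho^{3\beta})$ term of $g^{00}$ together with the $g^{0k}$ contributions, after which the two terms $h_{0i}h^{ij}h_{j0}$ cancel and one is left with $\hat\nabla_0\rho_0|_{\partial M}=-\tfrac{h_{00}}{2C}$. Assembling the three component computations yields $h_{ab}|_{\partial M}=-2C\,\hat\nabla d\rho$, completing (ii).
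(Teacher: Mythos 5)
Your proposal is correct, and it reaches both parts of the proposition by a route that differs in execution from the paper's. Both arguments share the same starting point: ${}^\rho\nabla=\nabla+\tfrac{d\rho}{\al\rho}$ is an admissible smooth-up-to-the-boundary connection for computing the projective second fundamental form, the modification term drops out in the relevant components, and everything reduces to the asymptotics of the Hessian of $\rho$. From there, however, the paper works invariantly: it recycles the objects constructed in the proof of Theorem 2.6 of \cite{Proj-comp} --- the transversal $\zeta_0$ with $d\rho(\zeta_0)\equiv 1$ which is $h$--orthogonal to $\ker d\rho$ --- and evaluates the boundary value of $-d\rho(\hat\nabla_\xi\eta)$ as that of $\tfrac{-1}{C}\rho^{4/\al}g(\hat\nabla_\xi\eta,\zeta_0)$ via the modified Koszul formula, sorting terms by powers of $\rho$; no inverse metric ever appears. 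You instead compute in adapted coordinates with Christoffel symbols, which forces you to control $g^{00}$, $g^{0i}$, $g^{ij}$ via the Schur complement. What your approach buys is self-containedness (you use only the statement, not the proof-internal machinery, of Theorem 2.6 of \cite{Proj-comp}) and a very mechanical leading-order analysis for (i) and the first part of (ii); what the paper's buys is manifestly tensorial bookkeeping, and a lighter computation for the full-tensor statement in (ii), since your route there needs the subleading term of $g^{00}$. I verified that your delicate step closes as described: with $C$ constant and $\be=1$, the subleading term $-\tfrac{\rho^3}{C^2}\bigl(h_{00}-h_{0i}h^{ij}h_{j0}\bigr)$ of $g^{00}$ contributes $\tfrac1C\bigl(h_{00}-h_{0i}h^{ij}h_{j0}\bigr)$ to $\Gamma^0_{00}$, the $g^{0k}$ terms contribute $\tfrac1C h_{0i}h^{ij}h_{j0}$, the $\rho^{-1}$ poles cancel against the projective modification, and one lands on $\hat\nabla_0\rho_0|_{\partial M}=-\tfrac{h_{00}}{2C}$; similarly $\Gamma^0_{0i}|_{\partial M}=\tfrac1{2C}h_{ik}h^{kj}h_{j0}=\tfrac1{2C}h_{0i}$. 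Two minor points of precision: in the first statement of (ii) the function $C$ need not be constant, so $h|_{T\partial M}=-2C\,\hat\nabla d\rho|_{T\partial M}$ exhibits $h$ as a conformal rescaling (not a ``nonzero constant multiple'') of $\hat\nabla d\rho$ --- which suffices, since the projective second fundamental form is only a conformal class; and for the component $\Gamma^0_{0i}$ the tangential derivative of $C$ is already harmless by the standing hypothesis that $\rho^{-2/\al}\zeta\cdot C$ extends for tangential $\zeta$, so constancy of $C$ is genuinely needed only to control the transversal derivative entering $\Gamma^0_{00}$.
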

\begin{proof}
We use ideas from the proof of Theorem 2.6 of \cite{Proj-comp} and
also the notation introduced there.  In the proof of that theorem, one
first constructs a vector field $\zeta_0$ such that
$d\rho(\ze_0)\equiv 1$ and $\ze_0$ is orthogonal with respect to $h$
to all vector fields in the kernel of $d\rho$. In particular, as
observed there, for any tangent vector fields $\xi$, $\eta$  one can compute the boundary value of
$-d\rho(\hat\nabla_\xi\eta)$ as the boundary value of
$\tfrac{-1}{C}\rho^{4/\al}g(\hat\nabla_\xi\eta,\zeta_0)$. 

A key ingredient in the proof of Theorem 2.6 of \cite{Proj-comp} is
the modified Koszul formula, which says that
$2g(\hat\nabla_\xi\eta,\zeta_0)$ can be computed as
\begin{equation}\label{modKos}
\begin{aligned}
&\xi\cdot g(\eta,\ze_0)-\ze_0\cdot g(\xi,\eta)+\eta\cdot
  g(\xi,\ze_0)+g([\xi,\eta],\ze_0)-g([\xi,\ze_0],\eta)\\ 
-&g([\eta,\ze_0],\xi)+\tfrac{2d\rho(\xi)}{\al\rho}g(\eta,\ze_0)+
  \tfrac{2d\rho(\eta)}{\al\rho}g(\xi,\ze_0).
\end{aligned}
\end{equation}
Let us first assume that $d\rho(\xi)=d\rho(\eta)=0$. Then we get
$d\rho([\xi,\eta])=-dd\rho(\xi,\eta)=0$, so $g(\xi,\zeta_0)$,
$g(\eta,\ze_0)$ and $g([\xi,\eta],\ze_0)$ vanish identically. Next,
$g([\xi,\zeta_0],\eta)=\frac1{\rho^{2/\al}}h([\xi,\zeta_0],\eta)$, so
after multiplication by $\rho^{4/\al}$ this extends smoothly to the
boundary by zero, and the same holds for the corresponding term with
$\xi$ and $\eta$ exchanged. In conclusion, we see that we can compute the
boundary value of $-d\rho(\hat\nabla_\xi\eta)$ as the boundary value
of
$$
\tfrac{1}{2C}\rho^{4/\al}\zeta_0\cdot
g(\xi,\eta)=\tfrac{1}{2C}\rho^{4/\al}\zeta_0\cdot
\tfrac{1}{\rho^{2/\al}}h(\xi,\eta). 
$$ Up to terms vanishing along the boundary, this equals
$\tfrac{-1}{\al C}\rho^{(2-\al)/\al}h(\xi,\eta)$. But since
$d\rho(\eta)=0$, we get $-d\rho(\hat\nabla_\xi\eta)=(\hat\nabla_\xi
d\rho)(\eta)$, so we get (i) and the first part of (ii).

\smallskip

To obtain the second statement in (ii) we have to analyze (in the case
$\al=2$ and for $C$ being constant) the modified Koszul formula
\eqref{modKos} for general vector fields $\xi$ and $\eta$, which needs
much more care. From the proof of Theorem 2.6 in \cite{Proj-comp} we see
that (always taking into account that $\al=2$) 
\begin{gather*}
g(\eta,\ze_0)=d\rho(\eta)(\tfrac{C}{\rho^2}+\tfrac1\rho
h(\ze_0,\ze_0))\\ 
g(\xi,\eta)=\tfrac{C}{\rho^2}d\rho(\xi)d\rho(\eta)+\tfrac1\rho
h(\xi,\eta). 
\end{gather*}
Now if we plug the appropriate versions of these into the modified
Koszul formula \eqref{modKos} and carry out the differentiations, we
can sort the terms according to powers of $\rho$. In the proof of
Theorem 2.6 of \cite{Proj-comp} it is shown that the terms containing
$\tfrac{1}{\rho^3}$ add up to zero. We have to determine the terms
containing $\tfrac{1}{\rho^2}$ while we may ignore terms containing
$\tfrac{1}{\rho}$ or no negative power of $\rho$. The first and third
term in \eqref{modKos} together contribute
\begin{equation}\label{tech1}
C\xi\cdot d\rho(\eta)+C\eta\cdot
d\rho(\xi)-2d\rho(\xi)d\rho(\eta)h(\ze_0,\ze_0) 
\end{equation}
to the coefficient of $\tfrac{1}{\rho^2}$. Now the last part of this
cancels with the contribution of the last two summands in
\eqref{modKos}. On the other hand, the only contribution of the fourth
summand in \eqref{modKos} to the coefficient of $\tfrac{1}{\rho^2}$ is
$Cd\rho([\xi,\eta])$. Expanding $0=dd\rho(\xi,\eta)$ we see that this
adds up with the second term in \eqref{tech1} to $C\xi\cdot
d\rho(\eta)$, so the overall contribution of all terms we have
considered so far is $2C\xi\cdot d\rho(\eta)$. 

Next, the contribution of the second summand of \eqref{modKos} to the
coefficient of $\tfrac{1}{\rho^2}$ is given by 
$$
h(\xi,\eta)-C\ze_0\cdot(d\rho(\xi)d\rho(\eta)), 
$$
while the fifth and sixth summands contribute 
$$
-Cd\rho([\xi,\ze_0])d\rho(\eta)-Cd\rho([\eta,\ze_0])d\rho(\xi). 
$$ 
But since $d\rho(\ze_0)\equiv 1$, the fact that $0=dd\rho(\xi,\ze_0)$
implies that $\ze_0\cdot d\rho(\xi)=d\rho([\xi,\ze_0])$ and likewise
for $\eta$, so these terms together only contribute $h(\xi,\eta)$. 

Collecting the results, we see that the boundary value of
$-d\rho(\hat\nabla_\xi\eta)$ can be computed as the boundary value of
$\tfrac{-1}{2C}(2C\xi\cdot d\rho(\eta)+h(\xi,\eta))$. Bringing the
first term to the other side, we obtain the boundary value of
$(\hat\nabla d\rho)(\xi,\eta)$ which implies the result. 
\end{proof}

Next, we describe the curvature for pseudo--Riemannian metrics which
are projectively compact of order two and show that they satisfy an
asymptotic version of the Einstein equation.

\begin{thm}\label{thm3.3}
Let $g=g_{ab}$ be a pseudo--Riemannian metric on $M$, with inverse
$g^{ab}$, which is projectively compact or order two and let
$h=h_{ab}$ and $C$ be as in Theorem \ref{thm2.5}. Let $R_{ab}{}^c{}_d$
be the Riemann curvature of $g$, $R_{ab}=R_{d a}{}^d{}_b$ its Ricci
curvature and $S=g^{ab}R_{ab}$ its scalar curvature.

(i) The trace--free part $R_{ab}-\tfrac{S}{n+1}g_{ab}$ of the Ricci
tensor admits a smooth extension to the boundary.

(ii) Up to terms which admit a smooth extension to the boundary, the
curvature of $g_{ab}$ is given by
$$
R_{ab}{}^c{}_d=-\tfrac{1}{2\rho^2}\delta^c_{[a}\rho_{b]}\rho_d-
\tfrac{1}{2C\rho}\delta^c_{[a}h_{b]d}.
$$
\end{thm}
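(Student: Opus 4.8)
The plan is to read the curvature off the projective curvature decomposition and then feed in the Schouten asymptotics of Proposition~\ref{prop3.1}. Since $\nabla$ is a Levi--Civita connection its projective Schouten tensor $\Rho_{ab}$ is symmetric, so $\be_{ab}=0$ and, exactly as in the proof of Proposition~\ref{prop3.3}, $R_{ab}{}^c{}_d=C_{ab}{}^c{}_d+2\de^c_{[a}\Rho_{b]d}$. The projective Weyl tensor $C_{ab}{}^c{}_d$ is projectively invariant and the projective structure extends to $\barm$, so $C_{ab}{}^c{}_d$ extends smoothly; hence, modulo terms smooth up to the boundary, $R_{ab}{}^c{}_d\equiv 2\de^c_{[a}\Rho_{b]d}$, and the whole problem is reduced to expanding $\Rho_{ab}$ to the two relevant orders in $\rho$.

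Specialising Proposition~\ref{prop3.1} to $\al=2$ (so $\tfrac{\al-1}{\al^2}=\tfrac14$), the tensor $\Si_{ab}:=\rho\Rho_{ab}+\tfrac1{4\rho}\rho_a\rho_b$ is smooth up to the boundary and $\Rho_{ab}=\tfrac1\rho\Si_{ab}-\tfrac1{4\rho^2}\rho_a\rho_b$. The $\rho^{-2}$ part produces precisely $-\tfrac1{2\rho^2}\de^c_{[a}\rho_{b]}\rho_d$ (recovering the leading term of Proposition~\ref{prop3.3}(ii)), while the $\rho^{-1}$ part gives $\tfrac2\rho\de^c_{[a}\Si_{b]d}$. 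The decisive step is to rewrite this second term through $h$. By \eqref{Rhoasymp} the boundary value of $\Si_{ab}$ is $\tfrac1\al\hat\nabla_a\rho_b=\tfrac12\hat\nabla_a\rho_b$, and, because Theorem~\ref{thm2.5} guarantees a \emph{constant} $C$, Proposition~\ref{prop3.2}(ii) gives that the boundary value of $h_{ab}$ is $-2C\hat\nabla_a\rho_b$. Both identities hold for the full symmetric $\binom02$--tensors, not merely after restriction to $T\partial M$; consequently $2\Si_{ab}+\tfrac1{2C}h_{ab}$ vanishes along all of $\partial M$, hence equals $\rho$ times a tensor smooth up to the boundary. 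This full--tensor vanishing is exactly the step I expect to be the main obstacle: it is what keeps $\tfrac1\rho\bigl(2\Si_{ab}+\tfrac1{2C}h_{ab}\bigr)$ smooth in \emph{every} component (including the doubly--normal one), and it is the one place where the two independent boundary computations, Propositions~\ref{prop3.1} and \ref{prop3.2}(ii), must be matched against each other. Granting it, $\tfrac2\rho\de^c_{[a}\Si_{b]d}=-\tfrac1{2C\rho}\de^c_{[a}h_{b]d}$ modulo smooth terms, and adding the two pieces yields (ii).

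Finally I would deduce (i) by contracting (ii) over $c=a$. Using $\de^c_{[a}\rho_{b]}\rho_d\mapsto\tfrac n2\rho_b\rho_d$ and $\de^c_{[a}h_{b]d}\mapsto\tfrac n2 h_{bd}$ gives $R_{bd}=-\tfrac n{4\rho^2}\rho_b\rho_d-\tfrac n{4C\rho}h_{bd}+(\text{smooth})$. By Theorem~\ref{thm2.5}, $g_{bd}=\tfrac1\rho h_{bd}+\tfrac C{\rho^2}\rho_b\rho_d$, so the singular part collapses to $-\tfrac n{4C}g_{bd}=\tfrac{S(x)}{n+1}g_{bd}$, where $C=\tfrac{-n(n+1)}{4S(x)}$ and $S(x)$ is the (locally constant, nonzero) boundary value of the scalar curvature. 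Hence $R_{bd}-\tfrac{S(x)}{n+1}g_{bd}$ is smooth up to the boundary: the Ricci tensor agrees, to all singular orders, with that of an Einstein metric whose Einstein scalar is the limiting value of $S$, which is the asymptotic form of the Einstein equation in (i). Thus (i) follows at once from (ii) by taking a trace, and the entire difficulty is concentrated in the full--tensor boundary matching of the second step.
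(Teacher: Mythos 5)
Your proof of part (ii) is correct and is essentially the paper's own argument. The paper also starts from $R_{ab}{}^c{}_d=C_{ab}{}^c{}_d+2\de^c_{[a}\Rho_{b]d}$ (with $\be_{ab}=0$ and the Weyl tensor extending), and its key assertion that ``$\Rho_{ab}+\tfrac1{4C}g_{ab}$ admits a smooth extension'' is exactly your full--tensor matching: combine \eqref{Rhoasymp} (boundary value $\tfrac12\hat\nabla_a\rho_b$ of $\rho\Rho_{ab}+\tfrac1{4\rho}\rho_a\rho_b$) with the ``furthermore'' part of Proposition~\ref{prop3.2}(ii) (boundary value $-2C\hat\nabla_a\rho_b$ of $h_{ab}$). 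The point you single out as the main obstacle is indeed legitimate and is available: the paper proves that part of Proposition~\ref{prop3.2}(ii) by running the modified Koszul formula for \emph{general} vector fields $\xi,\eta$, not only tangential ones, so both identities are full--tensor statements and your division by $\rho$ is justified.

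Part (i) is where you genuinely diverge from the paper, and where there is a real gap. The paper proves (i) first (directly, using Proposition~\ref{prop2.3} together with Propositions~\ref{prop3.1} and \ref{prop3.2}) and then deduces (ii); you go the other way, contracting (ii). Your contraction arithmetic is fine, but what it delivers is smoothness of $R_{ab}-\tfrac{S(x)}{n+1}g_{ab}$, with the \emph{locally constant boundary value} $S(x)$, whereas statement (i) asserts smoothness of $R_{ab}-\tfrac{S}{n+1}g_{ab}$ with the scalar curvature \emph{function} $S$. These are not interchangeable: the difference is $\tfrac{S-S(x)}{n+1}g_{ab}$, and since $g_{ab}$ contains the singular part $\tfrac{C}{\rho^2}\rho_a\rho_b$ while $S-S(x)$ is only known to vanish to first order, this difference has a $\rho^{-1}\rho_a\rho_b$ part; it is smooth precisely when $S-S(x)=O(\rho^2)$, which nothing you prove supplies (contracting your own conclusion with $g^{ab}=O(\rho)$ only returns $S-S(x)=O(\rho)$). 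So ``(i) follows at once from (ii) by taking a trace'' is not accurate for the printed statement.

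Moreover, this gap cannot be closed, because the printed version of (i) is actually false: on $\barm=[0,\ep)\x N$ take $g=\tfrac{C}{\rho^2}\,d\rho^2+\tfrac1\rho k$ with $C>0$ constant and $k$ a $\rho$--independent metric on $N$ with scalar curvature $S^k\neq 0$ (e.g.\ the round sphere, $n\geq 2$). This has the asymptotic form that is sufficient for projective compactness of order two (Theorem 2.6 of \cite{Proj-comp}; one can also check directly that $\nabla+\tfrac{d\rho}{2\rho}$ is smooth up to the boundary), here $h_{ab}=k_{ab}$, and a warped--product computation gives $S=-\tfrac{n(n+1)}{4C}+\rho S^k$ while the $\rho\rho$--component of $R_{ab}-\tfrac{S}{n+1}g_{ab}$ equals $-\tfrac{CS^k}{(n+1)\rho}$, which admits no smooth extension. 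The constant version you prove does hold in this example, it is the natural ``asymptotically Einstein'' statement, and it is exactly what the paper's proof of (ii) invokes. The paper's own proof of (i) slips at the corresponding point: it identifies the boundary value of $\tfrac1{n+1}g^{ij}\Rho_{ij}h_{ab}$, where $h_{ab}$ is the tensor of Proposition~\ref{prop2.3} built from the non--constant function $(g^{ij}\Rho_{ij})^{-1}$, with $\tfrac12\hat\nabla_a\rho_b$ via Proposition~\ref{prop3.2}(ii), which applies only to the tensor of Theorem~\ref{thm2.5} built from the constant $C$; the two boundary values differ by a multiple of $\rho_a\rho_b$ proportional to the normal derivative of $S$. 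So your (ii) stands and your (i) proves the corrected statement, but you should state explicitly that what you prove is the constant--$S(x)$ version and that it is not equivalent to (i) as printed, rather than silently identifying the two.
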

\begin{proof}
(i) By Proposition \ref{prop3.1} and formula \eqref{Rhoasymp} from its
proof, $\rho\Rho_{ab}+\frac1{4\rho}\rho_a\rho_b$ admits a smooth
extension to the boundary with boundary value
$\tfrac12\hat\nabla_a\rho_b$. On the other hand, Proposition
\ref{prop2.3} shows that $\rho\tfrac1{n+1}g^{ij}\Rho_{ij}
g_{ab}+\frac1{4\rho}\rho_a\rho_b$ admits a smooth extension to the
boundary. The boundary value of this coincides with the one of
$\tfrac1{n+1}g^{ij}\Rho_{ij}h_{ab}$ and hence with the one of
$-\frac1{4C}h_{ab}$. By Proposition \ref{prop3.2}, the latter boundary
value also equals $\tfrac12\hat\nabla_a\rho_b$. Forming the
difference, we conclude that
$\rho(\Rho_{ab}-\frac1{n+1}g^{ij}\Rho_{ij}g_{ab})$ admits
a smooth extension to the boundary with boundary value zero, so the
tracefree part of $\Rho_{ij}$ admits a smooth extension to the
boundary. Now in dimension $n+1$, we have $R_{ab}=\frac1n\Rho_{ab}$,
which implies the result.

\smallskip 

(ii) We use the formula for the curvature from the proof of
Proposition \ref{prop3.3}, taking into account that
$\be_{ab}=0$. Since we know from above, that
$\Rho_{ab}+\tfrac1{4C}g_{ab}$ admits a smooth extension to the
boundary, we may replace $\Rho_{ab}$ by $-\tfrac1{4C}g_{ab}$, and then
the claim follows from inserting the asymptotic form
$$
g_{ab}=\tfrac1{\rho}h_{ab}+\tfrac{C}{\rho^2}\rho_a\rho_b. 
$$ for $g$.
\end{proof}

\section{Boundary tractors}\label{4}
For the last part of this article, we assume that we have given a
special affine connection on $M$ which is projectively compact of
order two and has the property that the projective second fundamental
form is non--degenerate (in directions tangent to the boundary) at
each boundary point. (Observe that by Theorem \ref{thm2.5} and
Proposition \ref{prop3.2}, this condition is always satisfied in the
case of a pseudo--Riemannian metric which is projectively compact of
order two.) In this case, as shown is Section \ref{3.1}, a
well-defined conformal geometry is induced on the boundary $\partial
M$. As for any conformal geometry, its structure is naturally captured and
conceptually described by its associated conformal tractor bundle and
connection.

 In this section we give a description of these conformal boundary tractors in
 terms of the projective structure in the interior. We derive formulae
 for the ingredients used in this description both in terms of
 asymptotics of data associated to the projectively compact connection
 in the interior and in terms of data which are manifestly smooth up
 to the boundary. In contrast to the usual presentation of conformal
 tractors, our description is entirely based on connections from the
 projective class, we do not choose a connection on the boundary which
 is compatible with the conformal structure.

\subsection{The tractor bundle and its metric}\label{4.1}
In spite of the rather complicated relation between a projectively
compact connection on $M$ and the induced conformal structure on
$\partial M$, we show that the tractor
bundles associated to these structures are easily and elegantly related.  
As we have
observed in Section \ref{2.0}, a special affine connection $\nabla$ on
$M$, which is projectively compact of order two, determines a defining
density $\tau\in\Ga(\Cal E(2))$ for $\partial M$ (up to a non--zero
constant factor). The main property of $\tau$ is that, over $M$, it is
parallel for $\nabla$. Via the BGG splitting operator, we obtain a
section $L(\tau)$ of the tractor bundle $S^2\Cal T^*$ over $\barm$.

The motivation for the developments in this section comes from the
special case of Levi--Civita connections of non--Ricci--flat Einstein
metrics. In this case, the section $L(\tau)$ of $S^2\Cal T^*$ is
parallel for the the tractor connection, thus defining a reduction of
projective holonomy to a pseudo--orthogonal group. Via the general
theory of holonomy reductions developed in \cite{hol-red}, one obtains
an induced conformal structure on the boundary, which by Proposition
\ref{prop3.2} coincides with the one discussed in this article. The
general theory further implies that one can obtain the conformal
standard tractor bundle by restricting the projective standard tractor
bundle to the boundary, endowing it with the bundle metric
$L(\tau)$. Furthermore the restriction of the projective standard
tractor connection to this bundle is the conformal standard tractor
connection, see Sections 3.1 and 3.2 of \cite{hol-red}.

Surprisingly, the first part of this works in far greater generality,
as follows.

\begin{prop}\label{prop4.1} Let $\barm=M\cup\partial M$ be a smooth
  manifold with boundary, and suppose that $\nabla$ is a linear
  connection on $TM$ which is projectively compact of order two and
  such that the projective second fundamental form on $\partial M$ is
  non--degenerate.

Then endowing the restriction $\Cal T|_{\partial M}$ of the projective
standard tractor bundle with the line subbundle $\Cal T^1|_{\partial
  M}$ and the bundle metric $L(\tau)|_{\partial M}$, one obtains a
standard tractor bundle for the induced conformal structure on
$\partial M$.

Explicitly, this means that $\Cal T^1|_{\partial M}$ is isomorphic to
the conformal density bundle $\Cal E[-1]$ and isotropic for
$L(\tau)|_{\partial M}$, the quotient $(\Cal T^1)^\perp/\Cal T^1$ is
isomorphic to $T\partial M\otimes\Cal E[-1]$ and the metric on this
quotient induced by $L(\tau)$ coincides with the conformal metric
defined by the projective second fundamental form.
\end{prop}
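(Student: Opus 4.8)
The plan is to establish the three claimed structural facts about the bundle metric $L(\tau)|_{\partial M}$ by working in the splitting determined by ${}^\rho\nabla$ and exploiting that $L(\tau)$ has the same formal structure as the metricity tractor $L(\tau^{-1}g^{ab})$ analyzed in Section \ref{2}. First I would compute $L(\tau)$ explicitly. Since $\tau\in\Ga(\Cal E(2))$ is parallel for $\nabla$ over $M$ and is a defining density (so $\tau=\rho\hat\tau$ with $\hat\tau$ nowhere vanishing and parallel for ${}^\rho\nabla$), applying the BGG splitting operator for $S^2\Cal T^*$ and then changing splitting to ${}^\rho\nabla$ via formula (3.11) of \cite{Proj-comp}—exactly as in the passage from \eqref{h-nabla} to \eqref{h-nrho} in Proposition \ref{prop2.2}—should yield an expression whose slots are controlled by $\rho$, $\rho_a$, and $\hat\nabla_a\rho_b$. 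The key input is Proposition \ref{prop3.1}, which tells us that for $\al=2$ the Schouten tensor contributes $\rho\Rho_{ab}+\tfrac{1}{4\rho}\rho_a\rho_b$, whose boundary value is $\tfrac12\hat\nabla_a\rho_b$, i.e.\ a representative of the projective second fundamental form.

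Next I would read off the three assertions from the boundary value of $L(\tau)$ in this splitting. The composition series \eqref{comp-ser} for $S^2\Cal T^*$ identifies the bottom slot with $\Cal E_{(ab)}(2)$, and I expect the boundary value to have a top slot (in $\Cal E(2)$) that vanishes and a bottom slot proportional to $\hat\nabla_a\rho_b$. The vanishing of the top slot is exactly the statement that the subbundle $\Cal T^1|_{\partial M}$ (the image of $\Cal E_a(1)\into\Cal T^*$, dually the annihilator in $\Cal T$) is \emph{isotropic} for $L(\tau)|_{\partial M}$. The identification of $\Cal T^1$ with the conformal density bundle $\Cal E[-1]$ should then follow from matching projective weights: the projective weight carried by $\tau\in\Cal E(2)$ together with the rank-one degeneracy along the normal direction $\rho_a$ pins down the conformal weight, and this is the point where I would invoke the non-degeneracy hypothesis on the projective second fundamental form to guarantee $L(\tau)|_{\partial M}$ has the correct signature and rank to be a conformal tractor metric.

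The heart of the argument, and the main obstacle, is the third assertion: that the metric induced by $L(\tau)|_{\partial M}$ on the quotient $(\Cal T^1)^\perp/\Cal T^1$ agrees with the conformal metric defined by the projective second fundamental form. This requires identifying $(\Cal T^1)^\perp/\Cal T^1$ with $T\partial M\otimes\Cal E[-1]$ and then checking that the bilinear form $L(\tau)|_{\partial M}$ descends to this quotient as precisely $\hat\nabla_a\rho_b$ restricted to $T\partial M\x T\partial M$. The subtlety is that $\Cal T^1\subset\Cal T$ and its $L(\tau)$-orthogonal complement $(\Cal T^1)^\perp$ do not a priori align cleanly with the projective filtration of $\Cal T$; one must verify that modding out by $\Cal T^1$ and intersecting with $(\Cal T^1)^\perp$ exactly kills the normal direction $\rho_a$ and the density slot, leaving the tangential part of $\hat\nabla_a\rho_b$. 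I would argue this by writing a general tractor in the ${}^\rho\nabla$-splitting, imposing $L(\tau)$-orthogonality to $\Cal T^1$ (which forces a condition on the relevant slot), and computing the residual pairing. That the resulting tangential bilinear form is the projective second fundamental form is then immediate from Proposition \ref{prop3.1}, and its well-definedness up to conformal rescaling (matching the conformal density weight of $\Cal E[-1]$) completes the identification with the conformal metric.
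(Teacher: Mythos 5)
Your route is essentially the paper's: compute $L(\tau)$ in the splitting determined by ${}^\rho\nabla$ (the paper's \eqref{Lhatform}), use Proposition \ref{prop3.1} to recognize the boundary value of the bottom slot as $\tfrac12\hat\tau\hat\nabla_a\rho_b$, read isotropy of $\Cal T^1$ off the vanishing of the top slot, and get the third assertion by imposing $L(\tau)$--orthogonality to $\Cal T^1$, which along $\partial M$ forces the $\Cal E^a(-1)$--component of a tractor to be tangential, so that the residual pairing on $(\Cal T^1)^\perp/\Cal T^1\cong T\partial M(-1)$ is the tangential restriction of the bottom slot. Two small slips: the change of splitting for $L(\tau)\in\Ga(S^2\Cal T^*)$ is the one used in Proposition \ref{prop2.3} (formula (3.5) of \cite{Proj-comp}), not the $S^2\Cal T$--formula (3.11) used in the passage from \eqref{h-nabla} to \eqref{h-nrho} in Proposition \ref{prop2.2}; and the non--degeneracy hypothesis enters by making $L(\tau)|_{\partial M}$ a non--degenerate bundle metric (via the bottom slot restricted to $T\partial M$), not as something needed to ``pin down the conformal weight''.

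The genuine gap is the identification of restricted projective density bundles with the intrinsic conformal density bundles of $\partial M$, which you dismiss as ``matching projective weights''. Weights are just numbers: $\Cal E(-1)|_{\partial M}$ and $\Cal E[-1]$ are a priori unrelated line bundles over $\partial M$, and without a canonical isomorphism between them the assertions that $\Cal T^1|_{\partial M}\cong\Cal E[-1]$, that $(\Cal T^1)^\perp/\Cal T^1\cong T\partial M\otimes\Cal E[-1]$, and that the induced metric ``coincides with the conformal metric'' (a section of $S^2T^*\partial M\otimes\Cal E[2]$) have no precise meaning. The paper constructs the isomorphism explicitly: since the top slot of $L(\tau)$ vanishes along $\partial M$, the middle slot $\hat\tau\rho_a$ is independent of all choices and so is a canonical, nowhere vanishing section of $\Cal N(2)$, where $\Cal N\subset T^*\barm|_{\partial M}$ is the conormal bundle; combining this with $(\La^{n+1}T^*\barm)|_{\partial M}\cong\Cal N\otimes\La^nT^*\partial M$, i.e.\ $\Cal E(-n-2)|_{\partial M}\cong\Cal N\otimes\Cal E[-n]$, gives $\Cal E(n)|_{\partial M}\cong\Cal E[n]$ and hence $\Cal E(-1)|_{\partial M}\cong\Cal E[-1]$. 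Note that the input here is again the vanishing of the top slot --- the same fact that gives isotropy of $\Cal T^1$ --- so you already have all the ingredients in hand; what is missing is assembling them into the canonical trivialization of $\Cal N(2)$ that makes the three claimed identifications meaningful.
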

\begin{proof}
In Section 3.3 of \cite{Proj-comp} it is shown that in the splitting
of $S^2\Cal T^*$ determined by $\nabla$ (which is only defined over
$M$) we have
\begin{equation}\label{Lform}
L(\tau)=\begin{pmatrix} \tau \\ 0\\ \Rho_{ab}\tau \end{pmatrix}. 
\end{equation}
Here we use that the Schouten tensor of a special affine connection is
symmetric. Now we can easily analyze the boundary behavior $L(\tau)$
analogously to the proof of Proposition \ref{prop2.3}. Consider a
local defining function $\rho$ for $\partial M$ and let
$\hat\nabla={}^\rho\nabla$ be the corresponding projectively rescaled
connection which admits a smooth extension to the boundary. Similar to
arguments in the proof of Proposition \ref{prop2.3}, we see that in the
splitting determined by $\hat\nabla$, we get
\begin{equation}\label{Lhatform}
L(\tau)=\begin{pmatrix} \rho\hat\tau
\\ \tfrac12\rho_a\hat\tau\\ \Rho_{ab}\rho\hat\tau+
\tfrac{\rho_a\rho_b}{4\rho}\hat\tau \end{pmatrix}. 
\end{equation}
Along the boundary, the top slot vanishes, while the middle slot is
evidently nowhere vanishing with pointwise kernel isomorphic to
$T\partial M\subset T\barm|_{\partial M}$. Finally, by formula
\eqref{Rhoasymp} from the proof of Proposition \ref{prop3.1}, the
boundary value of the bottom slot is
$\tfrac12\hat\tau\hat\nabla_a\rho_b$, so the restriction of this
bilinear form to boundary directions is non--degenerate by the
assumptions.

Together, this shows that $L(\tau)|_{\partial M}$ defines a
non--degenerate bundle metric on the restriction $\Cal T|_{\partial
  M}$ and that $\Cal T^1\subset\Cal T$ is isotropic for this bundle
metric along the boundary. Moreover, the form of the middle slot of
$L(\tau)$ in \eqref{Lhatform} implies that the quotient $(\Cal
T^1)^{\perp}/\Cal T^1$ can be identified with $T\partial M(-1)\subset
T\barm(-1)|_{\partial M}$.

Finally, recall that there is the canonical conormal bundle $\Cal
N\subset T^*\barm|_{\partial M}$, which is defined as the annihilator
of $T\partial M$. Now for the top exterior powers, we get
$(\La^{n+1}T^*\barm)|_{\partial M}\cong \Cal N\otimes
(\La^nT^*\partial M)$. In terms of the usual conventions for
projective and conformal density bundles (see \cite{BEG}) this reads
as $\Cal E(-n-2)|_{\partial M}\cong\Cal N\otimes\Cal E[-n]$. Now since
the top slot of $L(\tau)$ vanishes along $\partial M$, its middle slot
$\hat\tau\rho_a$ is actually independent of all choices, thus defining a
nowhere vanishing section of $\Cal N(2)\cong\Cal E(-n)|_{\partial
  M}\otimes\Cal E[n]$. In particular, this induces a canonical
isomorphism $\Cal E(n)|_{\partial M}\cong\Cal E[n]$ and hence also an
identification $\Cal E(-1)|_{\partial M}\cong\Cal E[-1]$.

This shows that we obtain the claimed composition series for $\Cal
T|_{\partial M}$. Since the bundle metric on $(\Cal T^1)^\perp/\Cal
T^1$ induced by $L(\tau)$ clearly comes from the restriction of
$\tfrac12\hat\tau\hat\nabla_a\rho_b$ to tangential directions, we also
get the correct conformal metric on the quotient.
\end{proof}

\subsection{The asymptotically parallel case}\label{4.1a} 
Without further assumptions, one can certainly not follow the
developments in the Einstein case discussed in \ref{4.1} directly,
since the projective standard tractor connection is not compatible
with the bundle metric $L(\tau)$. Indeed, the covariant derivative of
$L(\tau)$ with respect to the normal tractor connection on $S^2\Cal
T^*$ can be computed explicitly, see Section 3.3 of
\cite{Proj-comp}. There it is shown that, in the splitting on $M$
determined by the projectively compact connection $\nabla$, this
derivative is given by putting $\tau\nabla_a\Rho_{bc}$ into the bottom
slot of the tractor, while the other two slots are identical
zero. Since the bottom slot is the injecting slot, it has the same
form in any other splitting, so in particular, this section has to
admit a smooth extension to the boundary. We next give a direct proof
for the fact that $\tau\nabla_a\Rho_{bc}$ admits a smooth
extension. We also derive a formula for this tensor in terms of
objects which are manifestly smooth up to the boundary as well as an
alternative description, which is valid for Levi--Civita connections.

\begin{prop}\label{prop4.3}
Let $\nabla$ be a special affine connection on $M$, which is
projectively compact of order $2$ and induces a non--degenerate
boundary geometry on $\partial M$ and let $\Rho_{ab}$ be its Schouten
tensor. Let $\rho$ be a local defining function for the boundary and
let $\hat\nabla=\nabla+\frac{d\rho}{2\rho}$ be the corresponding
connection in the projective class. Then we have

(i) $ \rho\nabla_a\Rho_{bc}=\tfrac12\hat\nabla_a\hat\nabla_b\rho_c+
\rho_a\hat\Rho_{bc}+\tfrac12\rho_b\hat\Rho_{ac}+\tfrac12\rho_c\hat\Rho_{ab}+
\rho\hat\nabla_a\hat\Rho_{bc}$, and the right hand side provides a
smooth extension of the left hand side to the boundary.

(ii) If $\nabla$ is the Levi--Civita connection of a
pseudo--Riemannian metric $g_{ab}$, and $S$ is its scalar curvature,
then for $\Ph_{ab}:=\Rho_{ab}-\tfrac1{n(n+1)}Sg_{ab}$, we get
$$
\rho\nabla_a\Rho_{bc}=\rho_a\Ph_{bc}+\tfrac12\rho_b\Ph_{ac}+
\tfrac12\rho_c\Ph_{ba}+\rho\left(\hat\nabla_a\Ph_{bc}+
\tfrac1{n(n+1)}g_{bc}\hat\nabla_aS\right).  
$$ 
All terms in the right hand side admit smooth extensions to the
boundary and the last summand does not contribute to the boundary
value.
\end{prop}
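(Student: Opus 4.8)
The plan is to derive both identities by expressing the interior connection $\nabla$ through the boundary--smooth connection $\hat\nabla={}^\rho\nabla$ and substituting the projective transformation law for the Schouten tensor; the asserted smooth extensions then drop out once the singular terms cancel. For (i) I would first record the change of the covariant derivative of a $\binom02$--tensor under $\hat\nabla=\nabla+\Ups$ with $\Ups_a=\tfrac{\rho_a}{2\rho}$, namely $\nabla_a\Rho_{bc}=\hat\nabla_a\Rho_{bc}+2\Ups_a\Rho_{bc}+\Ups_b\Rho_{ac}+\Ups_c\Rho_{ba}$, and then replace the undifferentiated $\Rho_{bc}$ by the $\al=2$ form of \eqref{Rhoasymp}, i.e.\ $\Rho_{bc}=\hat\Rho_{bc}+\tfrac1{2\rho}\hat\nabla_b\rho_c-\tfrac1{4\rho^2}\rho_b\rho_c$.

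After multiplying by $\rho$ I would sort the terms by their power of $\rho$. The content of the computation, and the step needing the most care, is that all negative powers of $\rho$ disappear: the contributions linear in the Hessian $\hat\nabla_b\rho_c$ and carrying a factor $\tfrac1\rho$ cancel in pairs (here one uses that $\hat\nabla$ is torsion free, so $\hat\nabla_a\rho_b$ is symmetric), while the cubic terms $\tfrac1{\rho^2}\rho_a\rho_b\rho_c$ cancel because their coefficients add to $\tfrac12-\tfrac14-\tfrac18-\tfrac18=0$. What remains is precisely $\tfrac12\hat\nabla_a\hat\nabla_b\rho_c+\rho_a\hat\Rho_{bc}+\tfrac12\rho_b\hat\Rho_{ac}+\tfrac12\rho_c\hat\Rho_{ab}+\rho\hat\nabla_a\hat\Rho_{bc}$, all of whose ingredients are smooth up to the boundary, which proves the formula and the smooth extension at once.

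For (ii) I would decompose $\Rho_{bc}=\Ph_{bc}+\tfrac1{n(n+1)}Sg_{bc}$, which is the definition of $\Ph_{bc}$ and makes it trace free for $g$. Since $\nabla$ is the Levi--Civita connection, $\nabla_ag_{bc}=0$ and $\nabla_aS=\hat\nabla_aS$, giving $\nabla_a\Rho_{bc}=\nabla_a\Ph_{bc}+\tfrac1{n(n+1)}g_{bc}\hat\nabla_aS$; applying the same $\nabla\to\hat\nabla$ conversion to $\nabla_a\Ph_{bc}$ and multiplying by $\rho$ reproduces the displayed formula. Smoothness of the first four terms is then automatic, since $\Ph_{bc}$ extends smoothly by Theorem \ref{thm3.3}(i), so that $\rho_a\Ph_{bc}$, $\rho_b\Ph_{ac}$, $\rho_c\Ph_{ba}$ and $\rho\hat\nabla_a\Ph_{bc}$ do; and as $\rho\nabla_a\Rho_{bc}$ is smooth by part (i), the remaining term $\tfrac{\rho}{n(n+1)}g_{bc}\hat\nabla_aS$ must be smooth as well --- a point worth stressing, as it is not visible by inspection because $g_{bc}$ itself blows up.

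The delicate assertion, and the main obstacle, is that this last term does not contribute to the boundary value. Here one must reconcile the blow--up $g_{bc}\sim\tfrac{C}{\rho^2}\rho_b\rho_c$ from Theorem \ref{thm2.5} with the smoothness just obtained: smoothness of $\tfrac{\rho}{n(n+1)}g_{bc}\hat\nabla_aS$ forces $\hat\nabla_aS=\nabla_aS$ to vanish along $\partial M$, and together with Theorem \ref{thm2.5}(1) (the boundary value of $S$ is locally constant) this sharpens to $S$ attaining its constant boundary value to second order. Writing $\hat\nabla_aS=\rho w_a$ and using $\rho^2g_{bc}\to C\rho_b\rho_c$, the boundary value of the term comes out proportional to $\rho_a\rho_b\rho_c$, hence lies entirely in the conormal direction and disappears as soon as any index is paired with a vector tangent to $\partial M$. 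I would present this normality statement as the heart of the matter, since it is the only place where the precise asymptotic form of $g$, rather than formal connection algebra, is genuinely used.
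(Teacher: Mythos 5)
Your part (i) is correct and is in substance the paper's own proof: the paper obtains the identity by applying $\hat\nabla_a$ to the $\al=2$ case of \eqref{Rhoasymp}, namely $\rho\Rho_{bc}+\tfrac1{4\rho}\rho_b\rho_c=\tfrac12\hat\nabla_b\rho_c+\rho\hat\Rho_{bc}$, and re-using that identity to absorb the singular terms; your ``substitute the transformation law and sort by powers of $\rho$'' is the same computation with different bookkeeping, and your cancellation pattern (the coefficients $\tfrac12-\tfrac14-\tfrac18-\tfrac18=0$ on the $\rho^{-2}\rho_a\rho_b\rho_c$ terms, symmetry of $\hat\nabla_a\rho_b$ for the $\rho^{-1}$ terms) checks out. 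The derivation of the displayed formula in (ii) also agrees with the paper's, and your point that smoothness of $\tfrac{\rho}{n(n+1)}g_{bc}\hat\nabla_aS$ is obtained only by subtraction is \emph{more} careful than the paper, whose entire justification of the remaining assertions is that they are ``evident''.

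The genuine gap is the final claim of (ii). The proposition asserts that the last summand contributes \emph{nothing} to the boundary value of $\rho\nabla_a\Rho_{bc}$; your argument yields only that its contribution is $\lim(\rho^2g_{bc})\cdot\lim(\rho^{-1}\hat\nabla_aS)$, i.e.\ a multiple of $\rho_a\rho_b\rho_c$, and you then quietly replace ``does not contribute'' by ``is conormal, hence dies on vectors tangent to $\partial M$''. These are different statements: actual vanishing requires the coefficient, which works out to $\tfrac{C}{n(n+1)}\partial_\rho^2S|_{\partial M}$, to be zero --- that is, $S$ must agree with its locally constant boundary value to \emph{third} order --- and nothing in your argument produces this. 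Nor can anything: for the surface metric $g=\tfrac{d\rho^2}{4\rho^2}+\tfrac{1+\rho^2}{\rho}\,dx^2$, which is projectively compact of order two (the Christoffel symbols of $\hat\nabla$ are visibly smooth up to $\rho=0$) and has $\Ph_{ab}\equiv0$ since trace-free Ricci vanishes in dimension two, one computes $S=-2-8\rho^2+O(\rho^3)$ and $\rho\nabla_a\Rho_{bc}\to-2\rho_a\rho_b\rho_c\neq0$, so the last summand is the only nonzero term on the right-hand side and it \emph{does} contribute. So the obstruction you hit is real: the statement as printed (and the paper's ``evident'') fails, and the conormal statement you prove is the strongest conclusion available; it is also what the later applications actually use (e.g.\ in the boundary formula for $A_a{}^b{}_c$ following Theorem \ref{thm4.3}, the conormal term is killed by contraction with $\rho^{-1}\Rho^{bd}$, since $\rho^{-1}\Rho^{bd}\rho_d\to0$). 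Be aware, finally, that in dimension at least three matters are worse: for metrics such as $\tfrac{d\rho^2}{4\rho^2}+\tfrac{1+\rho}{\rho}(dx^2+dy^2)$ the trace-free Ricci blows up like $\rho^{-1}$, so Theorem \ref{thm3.3}~(i) --- on which both you and the paper rely for smoothness of the $\Ph_{ab}$ terms --- is itself problematic, and then even the first assertion of (ii) fails; any honest repair of Proposition \ref{prop4.3}~(ii) has to address that dependence as well.
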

\begin{proof}
(i) For $\al=2$, equation \eqref{Rhoasymp} from the proof of
  Proposition \ref{prop3.1} reads as
\begin{equation}\label{Rhoasymp2}
\rho\Rho_{bc}+\tfrac1{4\rho}\rho_b\rho_c=\tfrac12\hat\nabla_b\rho_c+
\rho\hat\Rho_{bc}.  
\end{equation}
Applying $\hat\nabla_a$ to this equation, the second term on the left
hand side gives
\begin{equation}\label{techrho}
\tfrac{-1}{4\rho^2}\rho_a\rho_b\rho_c+\tfrac1{4\rho}\rho_b\hat\nabla_a\rho_c+
\tfrac1{4\rho}\rho_c\hat\nabla_a\rho_b .
\end{equation}
Now we can combine half of the first summand in this expression with
the second summand to obtain
$$
\tfrac{1}{4\rho}\rho_b(\hat\nabla_a\rho_c-\tfrac1{2\rho}\rho_a\rho_c). 
$$ 
From \eqref{Rhoasymp2} we see that we can replace the bracket by
$2\rho(\Rho_{ac}-\hat\Rho_{ac})$ and thus obtain 
$$
\tfrac12\rho_b\Rho_{ac}-\tfrac12\rho_b\hat\Rho_{ac}. 
$$ 
Likewise the second half of the first term in \eqref{techrho} adds up
with the last term in this formula to the same expression with $b$ and $c$
exchanged. 

To compute $\hat\nabla_a$ of the first term in the left hand side of
\eqref{Rhoasymp2} we use the standard formulae for the action of
projectively related connections on tensor fields to obtain
$$
\hat\nabla_a\Rho_{bc}=\nabla_a\Rho_{bc}-2\Ups_a\Rho_{bc}-\Ups_b\Rho_{ac}-
\Ups_c\Rho_{ba}.
$$
Here $\Ups$ describes the change from $\nabla$ to $\hat\nabla$,
i.e.~$\Ups_a=\tfrac{\rho_a}{2\rho}$. We have to multiply all that by
$\rho$ and add $\rho_a\Rho_{bc}$ to obtain the contribution of the
first term on the left hand side. Hence we conclude that applying
$\hat\nabla_a$ to the left hand side of \eqref{Rhoasymp2} we obtain 
$$ 
\rho\nabla_a\Rho_{bc}-\tfrac12\rho_b\hat\Rho_{ac}-\tfrac12\rho_c\hat\Rho_{ab}.
$$
Applying $\hat\nabla_a$ to the right hand side of \eqref{Rhoasymp2}
directly leads to the claimed formula. 

(ii) Observe first that $\Ph_{ab}$ admits a smooth extension to the
boundary by Theorem \ref{thm3.3}. Since $S$ admits a smooth extension
to the boundary by Proposition \ref{prop2.1}, the last statement is
evident. On $M$, we obtain
$$
\hat\nabla_a\Ph_{bc}=\nabla_a\Ph_{bc}-2\Ups_a\Ph_{bc}-\Ups_b\Ph_{ac}-
\Ups_c\Ph_{ba},
$$ 
as in the proof of part (i) with $\Ups_a=\frac{\rho_a}{2\rho}$. Now
$\nabla_a\Ph_{bc}=\nabla_a\Rho_{bc}-\tfrac{1}{n(n+1)}g_{bc}\nabla_aS$,
and since $S$ is a function, can replace $\nabla_a$ by $\hat\nabla_a$
in the last term. From this the claimed formula follows immediately by
multiplying by $\rho$ and rearranging terms.
\end{proof}

As mentioned in \ref{4.1}, in the case of the Levi--Civita connection
of an Einstein metric, the bundle metric $L(\tau)$ is parallel over
all of $\barm$, and one obtains the conformal standard tractor
connection on the boundary as a restriction of the projective standard
tractor connection. The argument which was used to prove this in
Proposition 3.2 of \cite{hol-red} actually can be applied in a
significantly more general situation, as we will show next. 

Surprisingly, it suffices to assume that $\nabla^{S^2\Cal T^*}L(\tau)$
vanishes along the boundary (although this is not enough to ensure
compatibility of the tractor curvature with $L(\tau)$ along the
boundary). Since $\nabla^{S^2\Cal T^*}L(\tau)$ amounts to
$\tau\nabla_a\Rho_{bc}$, in the sense described above, and by
Proposition \ref{prop4.3} (for example) this has a smooth extension to
the boundary, it follows that $\nabla^{S^2\Cal T^*}L(\tau)$ vanishes
on $\partial M$ if and only if $\nabla_a\Rho_{bc}$ admits a smooth
extension to all of $\barm$. Moreover, from Proposition \ref{prop4.3}
we see that, for a pseudo--Riemannian metric $g_{ab}$ which is
projectively compact of order two, vanishing of $\nabla^{S^2\Cal
  T^*}L(\tau)$ along $\partial M$ is equivalent to the
boundary value of $R_{ab}-\tfrac{S}{n+1}g_{ab}$ vanishing
identically. The last condition is a (by one order) stronger
asymptotic form of the Einstein equation than the one that $g_{ab}$
satisfies by Theorem \ref{thm3.3}.

\begin{thm}\label{thm4.1a}
 Let $\barm=M\cup\partial M$ be a smooth manifold of dimension
 $n+1\geq 4$ with boundary and suppose that $\nabla$ is a linear
 connection on $TM$ which is projectively compact of order two and
 such that the projective second fundamental form on $\partial M$ is
 non--degenerate. Assume further that the canonical defining density
 $\tau\in\Ga(\Cal E(2))$ for $\partial M$ determined by $\nabla$ has
 the property that $\nabla^{S^2\Cal T^*}L(\tau)|_{\partial M}=0$.

Then one can restrict the projective standard tractor connection to
the conformal standard tractor bundle on $\partial M$, as constructed in
Proposition \ref{prop4.1}, and the result is the canonical normal
conformal tractor connection.
\end{thm}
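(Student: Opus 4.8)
The plan is to show that the pullback of the projective standard tractor connection along the boundary inclusion satisfies the three conditions that characterise the normal conformal tractor connection on $\partial M$: it preserves the conformal tractor metric, it is a conformal tractor connection of the correct algebraic type (compatible with the filtration and inducing the canonical soldering), and its curvature is normal. First I would make the object precise. Writing $\jmath:\partial M\into\barm$ for the inclusion, the projective tractor connection $\nabla^{\Cal T}$ is defined on all of $\barm$, and for a section $s$ of $\Cal T|_{\partial M}$ and a vector $X$ tangent to $\partial M$ the derivative $\nabla^{\Cal T}_X s$ depends only on $s|_{\partial M}$; this defines the pullback connection $\bar\nabla:=\jmath^*\nabla^{\Cal T}$ on $\Cal T|_{\partial M}$. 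By Proposition \ref{prop4.1} the triple $(\Cal T|_{\partial M},\Cal T^1|_{\partial M},L(\tau)|_{\partial M})$ is precisely the conformal standard tractor bundle together with its filtration and metric, so $\bar\nabla$ is a connection on the correct bundle and it remains to identify it.

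The metric property is where the hypothesis enters. For $X\in\Ga(T\partial M)$ and $s,t\in\Ga(\Cal T|_{\partial M})$ the definition of the induced connection on $S^2\Cal T^*$ gives
$$
X\cdot L(\tau)(s,t)=(\nabla^{S^2\Cal T^*}_X L(\tau))(s,t)+L(\tau)(\bar\nabla_X s,t)+L(\tau)(s,\bar\nabla_X t),
$$
and since $\nabla^{S^2\Cal T^*}L(\tau)$ vanishes along $\partial M$ by assumption, the first term drops out. Hence $\bar\nabla$ preserves $L(\tau)|_{\partial M}$, i.e.\ it is metric for the conformal tractor metric. Compatibility of $\bar\nabla$ with the filtration and soldering is then an algebraic verification: working in the splitting determined by $\hat\nabla={}^\rho\nabla$ and using the identifications $\Cal E(-1)|_{\partial M}\cong\Cal E[-1]$ and $(\Cal T^1)^\perp/\Cal T^1\cong T\partial M\otimes\Cal E[-1]$ from Proposition \ref{prop4.1}, one checks that $\bar\nabla$ preserves $\Cal T^1|_{\partial M}$ modulo the correct slots and that its injecting part reproduces the canonical conformal soldering. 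Torsion--freeness of the induced tangent connection is inherited from that of the (torsion--free) projective tractor connection. These checks are routine once the identifications of Proposition \ref{prop4.1} are in hand.

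The substantive step, and the one I expect to be the main obstacle, is normality. The curvature of $\bar\nabla$ is the restriction to $\La^2 T\partial M$ of the projective tractor curvature, and since the projective structure on $\barm$ is normal this curvature is (the smooth extension of) the projective Weyl and Cotton data. What must be shown is that, reinterpreted through Proposition \ref{prop4.1}, it coincides with the normal conformal tractor curvature, i.e.\ that it lies in the kernel of the conformal Kostant codifferential $\partial^*$. Here I would adapt the argument of Proposition 3.2 of \cite{hol-red}, which treats the Einstein case where $L(\tau)$ is parallel on all of $\barm$; the decisive observation is that when the computation is carried out along the boundary only, the sole input actually used is that $\nabla^{S^2\Cal T^*}L(\tau)$ vanishes \emph{along $\partial M$}, which is exactly the present hypothesis. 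Concretely, differentiating the relation $\nabla^{S^2\Cal T^*}L(\tau)|_{\partial M}=0$ once more and feeding in the Schouten asymptotics recorded in Proposition \ref{prop3.1} and Proposition \ref{prop4.3} (equivalently, the strengthened asymptotic Einstein condition noted after Theorem \ref{thm3.3}) should pin down the trace parts of the restricted curvature and force the conformal normalisation.

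The hypothesis $n+1\geq 4$ enters precisely at this last point: it ensures the boundary conformal geometry has dimension $n\geq 3$, where the normal conformal tractor connection is the unique metric connection of the correct algebraic type with $\partial^*$--closed curvature, so that matching the three properties above identifies $\bar\nabla$ uniquely. I expect the genuine difficulty to lie in verifying that boundary--parallelism of $L(\tau)$ alone—rather than global parallelism as in the Einstein case—is enough to control the trace parts of the curvature, and this is the step where the asymptotic analysis of the earlier sections must be invoked most carefully.
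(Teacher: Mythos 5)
Your skeleton --- restrict $\nabla^{\Cal T}$ to the boundary, deduce metricity from the hypothesis, then prove normality of the curvature --- is the same as the paper's, and your metricity computation is correct. The gap is in the normality step, which is exactly where the paper has to work hardest. Conformal normality amounts to the vanishing of the Ricci--type contraction, \emph{over $T\partial M$}, of the endomorphisms of $(\Cal T^1)^\perp/\Cal T^1$ induced by the restricted curvature, while what projective normality gives for free is vanishing of the Ricci--type contraction over all of $T\barm$. To pass from one to the other, pick at $x\in\partial M$ a basis of $T_x\barm$ consisting of a basis of $T_x\partial M$ and a transversal vector; the two contractions differ by the term coming from the transversal basis vector, and killing that term requires showing that the values of the Weyl--curvature endomorphisms along $\partial M$ lie in $T\partial M\subset T\barm|_{\partial M}$, equivalently that $L(\tau)(\ka(\xi,\eta)(t),\si)|_{\partial M}=0$ for $\si\in\Ga(\Cal T^1)$, \emph{including when one of $\xi,\eta$ is transversal to $\partial M$}. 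Here your ``decisive observation'' fails: boundary--vanishing of $\nabla^{S^2\Cal T^*}L(\tau)$ yields skew--symmetry of $\ka(\xi,\eta)$ with respect to $L(\tau)$ along $\partial M$ only when both $\xi$ and $\eta$ are tangential, since then every term of $R^{S^2\Cal T^*}(\xi,\eta)(L(\tau))$ is a tangential derivative of a section vanishing on $\partial M$. If $\xi$ is transversal, the term $\nabla^{S^2\Cal T^*}_\xi\nabla^{S^2\Cal T^*}_\eta L(\tau)$ is a \emph{transversal} derivative of a section only known to vanish on $\partial M$, and is not controlled; as the paper itself notes, the naive argument would need the full one--jet of $\nabla^{S^2\Cal T^*}L(\tau)$ to vanish along $\partial M$. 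For the same reason your proposal to ``differentiate the relation $\nabla^{S^2\Cal T^*}L(\tau)|_{\partial M}=0$ once more'' is not legitimate: a relation holding only on $\partial M$ can be differentiated only in tangential directions.

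The missing idea, which the paper supplies, is a structural fact valid on all of $\barm$ rather than just along the boundary: $\nabla^{S^2\Cal T^*}L(\tau)$ is concentrated in the injecting (bottom) slot of $S^2\Cal T^*$ --- it is $\tau\nabla_a\Rho_{bc}$ there and zero in the other slots --- which means that $(\nabla^{S^2\Cal T^*}_\zeta L(\tau))(t,\si)$ vanishes \emph{identically} whenever $\si\in\Ga(\Cal T^1)$. Unlike the boundary hypothesis, this identity survives differentiation in arbitrary (in particular transversal) directions. The paper then computes $L(\tau)(\ka(\xi,\eta)(t),\si)$ directly from the defining equation for $\nabla^{S^2\Cal T^*}L(\tau)$: terms in which $\nabla^{S^2\Cal T^*}L(\tau)$ is contracted against general tractors vanish along $\partial M$ by hypothesis, the dangerous terms of the form $\xi\cdot\bigl((\nabla^{S^2\Cal T^*}_\eta L(\tau))(t,\si)\bigr)$ vanish identically by the slot argument, and the remaining iterated directional derivatives of the function $L(\tau)(t,\si)$ cancel against the $[\xi,\eta]$--term. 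Without this ingredient, or some substitute controlling the curvature components with a transversal form index, your argument does not close; the asymptotics of Propositions \ref{prop3.1} and \ref{prop4.3} that you invoke do not by themselves produce the needed statement. You did flag this as the likely sticking point, but the route you propose for resolving it would not succeed as described.
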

\begin{proof}
It is no problem to restrict the tractor connection on $\Cal
T\to\barm$ to a linear connection on $\Cal T|_{\partial M}\to\partial
M$. Since we have assumed that $\nabla^{S^2\Cal T^*}L(\tau)|_{\partial
  M}=0$, this produces a tractor connection, which is compatible with
the bundle metric $L(\tau)|_{\partial M}$. To complete the proof, it
remains to verify that the curvature of this tractor connection
satisfies the normalization condition imposed on a conformal standard
tractor connection. 

This normalization condition is best described in two steps. The first
requirement on the curvature is that it maps the distinguished
subbundle $\Cal T^1|_{\partial M}$ to itself. Skew symmetry of the
curvature then implies that is also preserves the orthocomplement of
this subbundle, so there is an induced endomorphism on the quotient
space, which is isomorphic to $T\partial M\otimes\Cal E(-1)$. One can
view the result as a section of $\La^2T^*\partial
M\otimes\End(T\partial M)$, and the second part of the normalization
condition is that the Ricci--type contraction of this tensor field
vanishes.

Now the curvature of the restricted connection is just the restriction
of the curvature of the projective standard tractor connection. This
means that one only inserts vectors tangent to the boundary into the
two--form part of the curvature, but the endomorphism part still acts
on the full bundle. It is well known (see \cite{BEG}) that the
curvature of the projective standard tractor connection satisfies
similar normalization conditions. In particular, this curvature
vanishes identically on the distinguished subbundle $\Cal
T^1$. Similarly as above, this implies that the values of the
curvature descend to endomorphisms of the quotient $\Cal T/\Cal
T^1\cong T\barm(-1)$. So one obtains a section of
$\La^2T^*\barm\otimes\End(T\barm)$ and the Ricci--type contraction of
this vanishes (and the tensor itself coincides with the projective
Weyl curvature of any connection in the projective class).

Now the fact that the subbundle $\Cal T^1$ is annihilated of course
carries over to the restriction, so the first part of the conformal
normalization condition is satisfied. Now suppose that we can further
show that values of the endomorphisms obtained from the projective
Weyl curvature along the boundary always lie in $T\partial M\subset
T\barm|_{\partial M}$. Then using a basis of $T_x\barm$ consisting of
a basis of $T_x\partial M$ for $x\in\partial M$ and one transversal
vector, one immediately concludes that the Ricci type contraction of
the projective Weyl curvature coincides with the Ricci--type
contraction over the subspaces $T\partial M$, so that latter
vanishes. Hence we can complete the proof by verifying this property
of the projective Weyl curvature. This can be done by taking a locally
non--vanishing section $\si$ of $\Cal T^1$ and proving that, denoting
by $\ka$ the curvature of the projective tractor connection
$\nabla^{\Cal T}$, we get
$$
L(\tau)(\ka(\xi,\eta)(t),\si)|_{\partial M}=0
$$ for all $\xi,\eta\in\frak X(\barm)$ and any section $t\in\Ga(\Cal
T)$. (We could actually assume in addition that $\xi$ is tangent to
$\partial M$ and that $L(\tau)(t,\si)=0$, but these assumptions are
not needed.) Note that this would follow immediately if we were to
assume that the one--jet of $\nabla^{S^2\Cal T^*}L(\tau)$ vanishes
along $\partial M$, since this implies skew symmetry of
$\ka(\xi,\eta)$ with respect to $L(\tau)$ along the boundary.

Under the weaker assumptions we have made, we have to supply a direct
argument which uses the additional information on $\nabla^{S^2\Cal
  T^*}L(\tau)$ we have available. We start with the defining equation
$$ (\nabla^{S^2\Cal T^*}_\xi
L(\tau))(t_1,t_2)=\xi\cdot(L(\tau)(t_1,t_2))-L(\tau)(\nabla^{\Cal
  T}_\xi t_1,t_2)-L(\tau)(t_1,\nabla^{\Cal T}_\xi t_2)
$$
for $t_1,t_2\in\Ga(\Cal T)$. Using this, one directly computes that
\begin{align*}
L(\tau)(\nabla^{\Cal T}_\xi&\nabla^{\Cal T}_\eta
t_1,t_2)-L(\tau)(t_1,\nabla^{\Cal T}_\eta\nabla^{\Cal T}_\xi t_2)\\
=& \xi\cdot(L(\tau)(\nabla^{\Cal T}_\eta t_1,t_2))-(\nabla^{S^2\Cal
  T^*}_\xi L(\tau))(\nabla^{\Cal T}_\eta t_1,t_2)\\
-&\eta\cdot(L(\tau)(t_1,\nabla^{\Cal T}_\xi
t_2))+(\nabla^{S^2\Cal T^*}_\eta L(\tau))(t_1,\nabla^{\Cal T}_\xi t_2).
\end{align*}
Observe that the terms involving a covariant derivative of $L(\tau)$
by assumption vanish along the boundary, so we can drop them for the
further considerations. Subtracting the same term with $\xi$ and
$\eta$ exchanged, for the right hand side we obtain 
\begin{align*}
\xi\cdot \big( L(\tau)(\nabla^{\Cal T}_\eta t_1,t_2)+&L(\tau)(t_1,\nabla^{\Cal T}_\eta t_2) \big)\\
=&\xi\cdot\eta\cdot (L(\tau)(t_1,t_2))-\xi\cdot\left(\nabla^{S^2\Cal
  T^*}_\eta L(\tau)(t_1,t_2)\right), 
\end{align*}
minus the same expression with $\xi$ and $\eta$ exchanged. Now the
second term in the right hand side here does not vanish along the boundary
in general. However, we only have to consider this in the case that
$t_2=\si\in\Ga(\Cal T^1)$. But the fact that $\nabla^{S^2\Cal T^*}
L(\tau)$ is concentrated in the bottom slot (over all of $\barm$)
which we have noted in the beginning of Section \ref{4.1a} exactly
means that any covariant derivative of $L(\tau)$ vanishes identically
provided that one of its entries is from the subbundle $\Cal T^1$. So
the only potential contribution to the boundary value coming from
these two terms is
\begin{equation}\label{lasttech}
\xi\cdot\eta\cdot (L(\tau)(t_1,t_2))-\eta\cdot\xi\cdot (L(\tau)(t_1,t_2)).
\end{equation}

To arrive at
$$
L(\tau)(\ka(\xi,\eta)(t_1),t_2)+L(\tau)(t_1,\ka(\xi,\eta)(t_2)),
$$
we further have to subtract 
\begin{align*}
L(\tau)&(\nabla^{\Cal T}_{[\xi,\eta]}t_1,t_2)+L(\tau)(t_1,\nabla^{\Cal
  T}_{[\xi,\eta]}t_2)\\
&=[\xi,\eta]\cdot (L(\tau)(t_1,t_2))-(\nabla^{S^2\Cal
  T^*}_{[\xi,\eta]} L(\tau))(t_1,t_2). 
\end{align*}
Now the first term on the right hand side cancels with
\eqref{lasttech}, while the second one vanishes along the boundary by
assumption. Now the claim follows since $\ka(\xi,\eta)$ vanishes on
the subbundle $\Cal T^1$. 
\end{proof}

\subsection{The inverse of the tractor metric}\label{4.2} 
Before we can proceed towards the description of the normal tractor
connection on the boundary in the case that $L(\tau)$ is not parallel
along the boundary, we have to derive some further properties of the
Schouten--tensor $\Rho_{ab}$ of $\nabla$. In Proposition \ref{prop4.1}
we have seen that non--degeneracy of the boundary geometry implies
that the bundle metric $L(\tau)$ is non--degenerate on $\partial
M$. By continuity, it is non--degenerate on some open neighborhood of
the boundary and we will from now on restrict to this neighborhood,
i.e.~assume that $L(\tau)$ is non--degenerate on all of $\barm$. On
$M$ we can return to the scale determined by $\tau$, and there, in
view of \eqref{Lform}, non--degeneracy of $L(\tau)$ is equivalent to
non--degeneracy of the Schouten--tensor $\Rho_{ab}$. This means that
we can use $\Rho_{ab}$ as a Riemannian metric on $M$, but of course,
the Levi--Civita connection of this metric is not in the projective
class in general.

By non--degeneracy, we can also form the inverse $\Rho^{ab}$ of
$\Rho_{ab}$ as a bilinear form. We can derive asymptotic properties of
$\Rho^{ab}$ using the inverse $L(\tau)^{-1}$ of the tractor metric,
which is a smooth section of $S^2\Cal T$ over all of $\barm$.

\begin{prop}\label{prop4.2}
Let $\nabla$ be a special affine connection on $M$, which is
projectively compact of order $2$ and induces a non--degenerate
boundary geometry on $\partial M$. Let $\rho$ be a local defining
function for the boundary and let
$\hat\nabla=\nabla+\frac{d\rho}{2\rho}$ be the corresponding
connection in the projective class. Then in the splitting of $S^2\Cal
T$ defined by the connection $\hat\nabla$, the inverse $L(\tau)^{-1}$
of the tractor metric is given by
\begin{equation}\label{L-1hatform}
L(\tau)^{-1}=\begin{pmatrix} \hat\tau^{-1}\rho^{-1}\Rho^{ab}
\\ 2\hat\tau^{-1} t^a\\\hat\tau^{-1} \ps \end{pmatrix}
\end{equation}
where $\tau=\rho\hat\tau$, $t^a=-\tfrac1{4\rho^2}\Rho^{ab}\rho_b$, and
$\ps$ is a function which is smooth up to the boundary. Moreover, we
obtain
\begin{equation}\label{split-ids}
\begin{aligned} &t^a\rho_a=1-\rho\ps \qquad
  t^a(\rho\Rho_{ab}+\tfrac{1}{4\rho}\rho_a\rho_{b})=-\tfrac14\ps\rho_b
  \\ &\rho^{-1}\Rho^{ac}(\rho\Rho_{cb}+\tfrac{1}{4\rho}\rho_c\rho_b)+
 t^a\rho_b=\delta^a_b
\end{aligned}
\end{equation}

In particular, the tensor fields $\rho^{-1}\Rho^{ab}$ and
$\rho^{-2}\Rho^{ab}\rho_b$ on $M$ admit smooth extensions to all of
$\barm$.
\end{prop}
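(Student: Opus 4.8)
The plan is to compute $L(\tau)^{-1}$ first in the splitting determined by $\nabla$ over $M$, where $L(\tau)$ is especially simple, and then to transport the answer into the $\hat\nabla$--splitting by the change--of--splitting formula for $S^2\Cal T$. By \eqref{Lform}, in the $\nabla$--splitting the slots of $L(\tau)\in\Ga(S^2\Cal T^*)$ are $(\tau,0,\Rho_{ab}\tau)$ (top, middle, bottom), so the middle slot vanishes. Reading $L(\tau)$ as the bilinear form $\Rho_{ab}\tau\,\mu^a\nu^b+\tau\,\sigma\sigma'$ on $\Cal T\cong\Cal E^a(-1)\oplus\Cal E(-1)$ and inverting this block form (which is legitimate over $M$ near $\partial M$, since as recalled at the start of \ref{4.2} non--degeneracy of $L(\tau)$ is equivalent to non--degeneracy of $\Rho_{ab}$), I obtain that in the $\nabla$--splitting
$$
L(\tau)^{-1}=\begin{pmatrix}\tau^{-1}\Rho^{ab}\\ 0\\ \tau^{-1}\end{pmatrix},
$$
where $\Rho^{ab}$ is the inverse of $\Rho_{ab}$.

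Next I would apply the change of splitting formula for $S^2\Cal T$ (formula (3.11) in Section 3.6 of \cite{Proj-comp}) for the one--form $\Ups_a=\tfrac{\rho_a}{2\rho}$, exactly as in the proof of Proposition \ref{prop2.2} (the starting slots here have the same shape as those in \eqref{h-nabla}, with $\Rho^{ab}$ playing the role of $g^{ab}$). Writing $\tau=\rho\hat\tau$, the top slot is invariant and becomes $\hat\tau^{-1}\rho^{-1}\Rho^{ab}$; the middle slot acquires the term $-\Ups_b\,\tau^{-1}\Rho^{ab}=-\tfrac1{2\rho^2}\hat\tau^{-1}\Rho^{ab}\rho_b$, which is precisely $2\hat\tau^{-1}t^a$ for $t^a=-\tfrac1{4\rho^2}\Rho^{ab}\rho_b$; and the bottom slot becomes $\tau^{-1}+\Ups_a\Ups_b\tau^{-1}\Rho^{ab}$, which I record as $\hat\tau^{-1}\ps$ with $\ps=\tfrac1\rho+\tfrac1{4\rho^3}\Rho^{ab}\rho_a\rho_b$. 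This establishes \eqref{L-1hatform} and identifies $t^a$ and $\ps$.

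The three identities in \eqref{split-ids} I would then read off from the defining relation $L(\tau)\o L(\tau)^{-1}=\id_{\Cal T}$, slot by slot, using the $\hat\nabla$--expressions \eqref{Lhatform} for $L(\tau)$ and \eqref{L-1hatform} for $L(\tau)^{-1}$. The quotient--quotient block gives $\rho^{-1}\Rho^{ac}(\rho\Rho_{cb}+\tfrac1{4\rho}\rho_c\rho_b)+t^a\rho_b=\delta^a_b$; the subbundle--subbundle block gives $t^a\rho_a=1-\rho\ps$; and a mixed block gives $t^a(\rho\Rho_{ab}+\tfrac1{4\rho}\rho_a\rho_b)=-\tfrac14\ps\rho_b$. (The remaining mixed block is an automatic consistency relation, equivalent to the very definition of $t^a$, and I would note this but not belabor it.)

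For the last, and genuinely substantive, assertion I would argue as follows. Under the standing hypotheses, Proposition \ref{prop4.1} together with continuity guarantees that $L(\tau)$ is a non--degenerate bundle metric on a full neighbourhood of $\partial M$, so its inverse $L(\tau)^{-1}$ is a section of $S^2\Cal T$ that is smooth up to the boundary. Since $\hat\nabla$ is smooth up to the boundary, each slot in the $\hat\nabla$--splitting \eqref{L-1hatform} is smooth up to the boundary; multiplying by the nowhere--vanishing, boundary--smooth $\hat\tau$ then shows that $\rho^{-1}\Rho^{ab}$, $\rho^{-2}\Rho^{ab}\rho_b$ (through $t^a$), and $\ps$ all admit smooth extensions. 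I expect this smoothness to be the only real obstacle: the explicit expressions $\ps=\tfrac1\rho+\tfrac1{4\rho^3}\Rho^{ab}\rho_a\rho_b$ and $\rho^{-1}\Rho^{ab}$ are assembled from individually singular pieces, so their regularity at $\partial M$ is invisible from the formulae themselves and must instead be forced by the smoothness of the inverse tractor metric, which in turn rests on the non--degeneracy of $L(\tau)$ up to the boundary. The only other care required is matching the exact signs in the change--of--splitting formula (3.11), which is routine and is already pinned down by the parallel computation in Proposition \ref{prop2.2}.
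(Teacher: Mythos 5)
Your proposal is correct and follows essentially the same route as the paper: both start from the $\nabla$--splitting expression \eqref{L-1form}, pass to the $\hat\nabla$--splitting, extract \eqref{split-ids} from the relation $L(\tau)\circ L(\tau)^{-1}=\id$ written out block by block (with the mixed block in the top slot reproducing the formula for $t^a$), and deduce the final smoothness claims from the fact that the slots of the inverse of a non--degenerate, boundary--smooth tractor metric must themselves be smooth up to the boundary. The only (harmless) difference is that the paper obtains the middle and bottom slots implicitly---defining $t^a$ and $\ps$ as those slots and then deriving $t^a=-\tfrac1{4\rho^2}\Rho^{ab}\rho_b$ from the inverse relation---whereas you compute all slots explicitly via the change--of--splitting formula as in Propositions \ref{prop2.2} and \ref{prop2.3}, which as a by--product yields the explicit expression $\ps=\tfrac1\rho+\tfrac1{4\rho^3}\Rho^{ab}\rho_a\rho_b$ not recorded in the paper.
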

\begin{proof}
Over $M$, and in the splitting determined by $\nabla$, we clearly
have
\begin{equation}\label{L-1form}
L(\tau)^{-1}=\begin{pmatrix} \tau^{-1}\Rho^{ab}\\ 0 \\ \tau^{-1}
\end{pmatrix}. 
\end{equation}
The top slot of this is independent of the choice of splitting, so we
see that we can use \eqref{L-1hatform} to define $t^a$ and $\ps$. But
then we can use formula \eqref{Lhatform} for $L(\tau)$ in the
splitting determined by $\hat\nabla$ to compute the consequences of
$L(\tau)$ and $L(\tau)^{-1}$ being inverses of each other. This is
most easily done by interpreting $L(\tau)$ as a map $\Cal T\to\Cal
T^*$ and $L(\tau)^{-1}$ as a map $\Cal T^*\to\Cal T$. By
\eqref{Lhatform} in the splitting determined by $\hat\nabla$, we have
$$
L(\tau)\left(\binom{\nu_1^a}{\si_1},\binom{\nu_2^b}{\si_2}\right)=
\hat\tau\left(\rho\si_1\si_2+\tfrac12\si_1\rho_a\nu_2^a+
\tfrac12\si_2\rho_a\nu_1^a+(\rho\Rho_{ab}+
\tfrac1{4\rho}\rho_a\rho_b)\nu_1^a\nu_2^b\right). 
$$
Hence the associated map is given by 
$$ 
\binom{\nu^a}{\si}\mapsto
\binom{\hat\tau(\rho\si+\tfrac12\rho_a\nu^a)}
      {\hat\tau(\tfrac12\si\rho_a+(\rho\Rho_{ab}+
        \tfrac1{4\rho}\rho_a\rho_b)\nu^b)}
$$
In the same way, one verifies that \eqref{L-1hatform} corresponds to
the map $\Cal T^*\to\Cal T$ given by
$$
\binom{\be}{\mu_a}\mapsto\binom{\hat\tau^{-1}(\rho^{-1}\Rho^{ab}\mu_b+2\be
  t^a)}{\hat\tau^{-1}(2t^a\mu_a+\ps\be)}.
$$ The fact that the composition of this with the above is the
identity immediately leads to the claimed formula for $t^a$ as well as
to \eqref{split-ids}. The last claim then follows since all slots in
\eqref{L-1hatform} must admit smooth extensions to the boundary.
\end{proof}

\subsection{The metric tractor connection}\label{4.3} 
Now we can proceed, in the general setting, toward a description of
the normal tractor connection on the conformal standard tractor bundle
obtained in Proposition \ref{prop4.1}. We will
do this in two steps, the first of which can be done on all of $\barm$
(assuming that $L(\tau)$ is non--degenerate on all of $\barm$). In
this first step, we modify the projective standard tractor connection
on $\Cal T$ to a connection which is compatible with the bundle metric
$L(\tau)$ and torsion free (in the sense of tractor connections). In
the second step, we have to restrict to the boundary, where we can
then normalize this metric tractor connection to obtain the conformal
standard tractor connection.

A modification of the standard tractor connection $\nabla^{\Cal T}$ is
determined by a contorsion, which is an element of
$\Om^1(\barm,\End(\Cal T))$. Choosing a connection in the projective
class, one obtains an isomorphism $\Cal T\cong \Cal E(-1)\oplus\Cal
E^a(-1)$ and correspondingly we get an isomorphism $\End(\Cal T)\cong
\Cal E_b\oplus (\Cal E^a_b\oplus\Cal E(0))\oplus\Cal E^a$. We write
this in a matrix form, with the action given by
$$
\begin{pmatrix} A^a{}_b & \xi^a \\ \ps_b & \la 
\end{pmatrix}\begin{pmatrix} \nu^b \\ \si \end{pmatrix}=
\begin{pmatrix} A^a{}_b\nu^b+\si\xi^a
  \\ \la\si+\ps_b\nu^b \end{pmatrix}. 
$$
From this definition and the change of splitting on standard tractors
as described in \cite{BEG}, one readily concludes that a change of
connection described by a one--form $\Ups_a$ changes this splitting as
\begin{equation}\label{End-split}
\begin{aligned}
&\hat\xi^a=\xi^a \qquad \hat A^a{}_b=A^a{}_b+\xi^a\Ups_b \qquad
  \hat\la=\la-\Ups_c\ph^c \\
&\hat\ps_b=\ps_b-A^c{}_b\Ups_c+\la\Ups_b-\Ups_c\xi^c\Ups_b . 
\end{aligned}
\end{equation}
Analogously, we can denote one--forms with values in $\End(\Cal T)$
by simply adding an additional lower index to each slot. It is also
straightforward to describe the linear connection on $\End(\Cal T)$
induced be the standard tractor connection. In terms of any connection
$\tilde\nabla_a$ in a projective class with Schouten--tensor
$\tilde\Rho_{ab}$ the standard tractor connection is, in the splitting
determined by $\tilde\nabla_a$, given by 
\begin{equation}\label{std-conn}
\nabla^{\Cal T}_a\binom{\nu^b}{\si}=
\binom{\tilde\nabla_a\nu^b+\si\delta_a^b}
{\tilde\nabla_a\si-\tilde\Rho_{ab}\nu^b},
\end{equation}
see \cite{BEG}. From this, one deduces by a straightforward
computation that the induced linear connection on $\End(\Cal T)$ is,
in that splitting, given by
\begin{equation}\label{End-conn}
\nabla^{\End(\Cal T)}_a\begin{pmatrix} A^b{}_c & \xi^b \\ \ps_c & \la 
\end{pmatrix}=\begin{pmatrix} \tilde\nabla_a A^b{}_c +
  \ps_c\delta_a^b+\tilde\Rho_{ac}\xi^b & \tilde\nabla_a
  \xi^b+\la\delta_a^b-A^b{}_a  \\ \tilde\nabla_a\ps_c
  -\tilde\Rho_{ad}A^d{}_c -\la\tilde\Rho_{ac} & \tilde\nabla_a\la
  -\tilde\Rho_{ad}\xi^d-\ps_a \end{pmatrix}
\end{equation}

Now we can compute the torsion free metric connection and its
curvature.

\begin{thm}\label{thm4.3}
Given $\nabla_a$ as before, consider the $\End(\Cal T)$--valued
one--form $\Ps$, which on $M$ is defined in the splitting corresponding
to $\nabla_a$ as having all entries equal to zero, except for
$$
A_a{}^b{}_c:=\tfrac12\Rho^{bd}(-\nabla_a\Rho_{dc}-\nabla_c\Rho_{da}+
\nabla_d\Rho_{ac}).
$$ 
Then we have:

(i) $\Ps$ admits a smooth extension to all of $\barm$ and, defining a
modification of the tractor connection as $\tilde\nabla^{\Cal T}_\xi
s:=\nabla^{\Cal T}_\xi s+\Ps(\xi)(s)$, the resulting connection is
metric for $L(\tau)$.

(ii) Consider a local defining function $\rho$ for $\partial M$, let
$\hat\nabla_a$ be the corresponding connection in the projective
class, $C_{ab}{}^c{}_d$ its projective Weyl curvature and $Y_{abc}$
its projective Cotton tensor. Further, let $t^a$ be the vector field
from Proposition \ref{prop4.2}, and put 
$$
\ps_{ac}:=t^d\rho(-\nabla_a\Rho_{dc}-\nabla_c\Rho_{da}+
\nabla_d\Rho_{ac}). 
$$ 
Then as a two--form with values in $\End(\Cal T)$, the curvature of
$\tilde\nabla^{\Cal T}$ is, in the splitting determined by
$\hat\nabla$, given by
$$
\begin{pmatrix}
C_{ab}{}^c{}_d+2\hat\nabla_{[a}A_{b]}{}^c{}_d-2\ps_{d[a}\delta^c_{b]}+
2A_e{}^c{}_{[a}A_{b]}{}^e{}_d  & 0 \\
Y_{abd}+2\hat\nabla_{[a}\ps_{b]d}-2\hat\Rho_{e[a}A_{b]}{}^e{}_d+2\ps_{e[a}A_{b]}{}^e{}_d
& 0
\end{pmatrix},
$$ so in particular, $\tilde\nabla^{\Cal T}$ is a torsion free tractor
connection.
\end{thm}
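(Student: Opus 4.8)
The plan is to work throughout in the splitting of $\Cal T$ (and the induced splittings of $\End(\Cal T)$ and $S^2\Cal T^*$) determined by $\nabla$ on $M$, where by \eqref{Lform} we have $L(\tau)=(\tau,0,\Rho_{ab}\tau)$ and $\Ps$ has all slots zero except $A_a{}^b{}_c$. For the smoothness assertion in (i) the key observation is that the defining expression $A_a{}^b{}_c=\tfrac12\Rho^{bd}(-\nabla_a\Rho_{dc}-\nabla_c\Rho_{da}+\nabla_d\Rho_{ac})$ is a product of two factors that individually degenerate at the boundary but whose orders exactly compensate: by Proposition \ref{prop4.2} the factor $\rho^{-1}\Rho^{bd}$ admits a smooth extension, while by Proposition \ref{prop4.3} each term $\rho\nabla_a\Rho_{bc}$ does as well. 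Writing $A_a{}^b{}_c=\tfrac12(\rho^{-1}\Rho^{bd})\big(\rho(-\nabla_a\Rho_{dc}-\nabla_c\Rho_{da}+\nabla_d\Rho_{ac})\big)$ exhibits it as a product of sections that are smooth up to the boundary, so $\Ps$ extends smoothly to all of $\barm$.

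For the metricity claim I would use that, since $\tilde\nabla^{\Cal T}=\nabla^{\Cal T}+\Ps$, the induced connection on $S^2\Cal T^*$ satisfies $(\tilde\nabla^{S^2\Cal T^*}_e L(\tau))(s_1,s_2)=(\nabla^{S^2\Cal T^*}_e L(\tau))(s_1,s_2)-L(\tau)(\Ps_e s_1,s_2)-L(\tau)(s_1,\Ps_e s_2)$. In the $\nabla$--splitting the action of $\Ps_e$ is purely through $A_e{}^b{}_c$ on the quotient slot, so the last two terms combine to $\tau$ times the symmetrisation $\Rho_{fb}A_e{}^f{}_c+\Rho_{fc}A_e{}^f{}_b$. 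Lowering the index with $\Rho^{bd}\Rho_{df}=\delta^b_f$ and using that $\Rho_{ab}$ is symmetric (as $\nabla$ is special), this symmetrisation collapses, by the Koszul--type cancellation, to a multiple of $\nabla_e\Rho_{bc}$ matching precisely the bottom slot of $\nabla^{S^2\Cal T^*}L(\tau)$, which equals $\tau\nabla_e\Rho_{bc}$ as recalled at the start of Section \ref{4.1a}. Thus $A$ is exactly the contorsion for which the two contributions cancel, giving $\tilde\nabla^{S^2\Cal T^*}L(\tau)=0$ on $M$; since both $\Ps$ and $L(\tau)$ are smooth up to the boundary, this identity persists on all of $\barm$ by continuity.

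For (ii) I would first rewrite $\Ps$ in the splitting determined by $\hat\nabla=\nabla+\tfrac{d\rho}{2\rho}$. Applying the change--of--splitting rule \eqref{End-split} with $\Ups_a=\tfrac{\rho_a}{2\rho}$ to the element whose only nonzero slot is $A_a{}^b{}_c$ produces, besides the unchanged $A$--slot, a lower slot equal to $-\tfrac{\rho_d}{2\rho}A_a{}^d{}_c$; using $t^a=-\tfrac1{4\rho^2}\Rho^{ab}\rho_b$ from Proposition \ref{prop4.2} this is exactly the tensor $\ps_{ac}$ of the statement, which is a useful consistency check. The curvature of $\tilde\nabla^{\Cal T}=\nabla^{\Cal T}+\Ps$ is $\tilde\ka=\ka+d^{\nabla^{\End(\Cal T)}}\Ps+\tfrac12[\Ps,\Ps]$, where $\ka$ is the normal projective tractor curvature, which in any splitting carries the projective Weyl curvature $C_{ab}{}^c{}_d$ in its top--left slot and the projective Cotton tensor $Y_{abd}$ in its bottom--left slot, all other slots vanishing. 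Inserting the induced connection \eqref{End-conn} for $d^{\nabla^{\End(\Cal T)}}\Ps$ (with $\hat\nabla$ as background) and computing the matrix commutator $[\Ps_a,\Ps_b]$ slotwise, one collects terms to obtain the displayed matrix.

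Finally, torsion--freeness amounts to the vanishing of the right--hand ($\xi$--type) column of $\tilde\ka$, and this is where the symmetry of $\Rho_{ab}$ enters decisively. The top--right slot of $d^{\nabla^{\End(\Cal T)}}\Ps$ is the antisymmetrisation over the form indices of $-A_b{}^c{}_a$; since the bracket $-\nabla_a\Rho_{dc}-\nabla_c\Rho_{da}+\nabla_d\Rho_{ac}$ is symmetric in its two lower indices once $\Rho$ is symmetric, $A$ is symmetric there and this antisymmetrisation vanishes. The commutator contributes nothing to the top--right slot because the $\xi$--slot of $\Ps$ is zero, and $\ka$ has vanishing top--right slot as well; likewise $\ps_{ac}$ is symmetric, so the bottom--right slot vanishes. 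Hence the whole right column is zero and $\tilde\nabla^{\Cal T}$ is a torsion free tractor connection. I expect the main obstacles to be, first, the smooth--extension step in (i), whose subtlety is precisely that $\Rho^{bd}$ and $\nabla\Rho$ must be paired in the correct orders, so that Propositions \ref{prop4.2} and \ref{prop4.3} are both indispensable; and second, the bookkeeping in (ii), namely transporting $\Ps$ correctly into the $\hat\nabla$--splitting and carefully tracking the $\hat\Rho$-- and $\ps$--contributions coming from \eqref{End-conn} and from $[\Ps,\Ps]$ when assembling the top--left and bottom--left slots.
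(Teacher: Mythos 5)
Your proposal is correct and follows essentially the same route as the paper's proof: smoothness of $\Ps$ from Propositions \ref{prop4.2} and \ref{prop4.3}, metricity via the Koszul--type symmetrization identity computed in the $\nabla$--splitting over $M$ and extended by continuity, and the curvature and torsion--freeness via \eqref{End-split}, \eqref{End-conn}, the standard formula for the curvature of a modified connection, and the symmetry of $A_a{}^b{}_c$ and $\ps_{ac}$ in their lower indices. The only (organizational) difference is that the paper establishes at the outset that in the $\hat\nabla$--splitting $\Ps$ has exactly the two slots $A_a{}^b{}_c$ and $-A_a{}^d{}_c\Ups_d=\ps_{ac}$, both smooth up to the boundary --- which is what the claim that $\Ps$ itself extends really requires --- whereas you defer that identification to part (ii); the substance is identical.
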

\begin{proof}
Take a local defining function $\rho$ for $\partial M$ and let
$\hat\nabla$ be the corresponding connection in the projective class
which is smooth up to the boundary. Then from \eqref{End-split} we see
that writing $\Ps$ over $M$ in the splitting corresponding to
$\hat\nabla$, there are two non--zero entries, namely $\hat
A_a{}^b{}_c=A_a{}^b{}_c$ and $\hat\ps_{ac}=-A_a{}^d{}_c\Ups_d$, and we
will omit the hats in the notation for $A$ and $\ps$ from now on. From
Propositions \ref{prop4.2} and \ref{prop4.3}, we know that
$\rho^{-1}\Rho^{ab}$ and $\rho\nabla_a\Rho_{bc}$ admit smooth
extensions to all of $\barm$, whence the same is true for
$A_a{}^b{}_c$. On the other hand $\Ups_a=\tfrac{\rho_a}{2\rho}$, so
again by Proposition \ref{prop4.2}, $\ps_{ac}$ admits a smooth
extension to the boundary and has the claimed form. 

Knowing that $\tilde\nabla^{\Cal T}$ is well defined on all of
$\barm$, it suffices to prove that it is metric on the dense open
subset $M$, where we can compute in the scale determined by
$\nabla$. In that scale, formula \eqref{Lform} for $L(\tau)$ shows
that
$$
L(\tau)\left(\binom{\nu_1^a}{\si_1},\binom{\nu_2^b}{\si_2}\right)=
\tau\si_1\si_2+\tau\Rho_{ab}\nu_1^a\nu_2^b. 
$$
On the other hand, 
$$
\tau\Rho_{bc}A_a{}^b{}_d\nu_1^d\nu_2^c=\tfrac12\tau\nu_1^d\nu_2^c
(-\nabla_a\Rho_{cd}-\nabla_d\Rho_{ca}+\nabla_c\Rho_{ad}), 
$$ and adding the same term with $\nu_1$ and $\nu_2$ exchanged, we
arrive at $-\nu_1^d\nu_2^c\tau\nabla_a\Rho_{cd}$. Using this, and
formula \eqref{std-conn} for the standard tractor connection it is
easy to verify by a direct computation that $\tilde\nabla^{\Cal T}_a$
is metric for $L(\tau)$.

For the description of the curvature, we use the description of $\Ps$
in the splitting corresponding to $\hat\nabla$ from above. The
curvature of the standard tractor connection is well known to be given
in a splitting by the Weyl--curvature and the Cotton tensor, see
\cite{BEG}. On the other hand, it is also well known that the
definition of $\tilde\nabla^{\Cal T}$ implies that its curvature is
related to the one of $\nabla^{\Cal T}$ by 
$$
\tilde R(\xi,\eta)=R(\xi,\eta)+\nabla^{\End(\Cal
  T)}_\xi(\Ps(\eta))-\nabla^{\End(\Cal
  T)}_\eta(\Ps(\xi))-\Ps([\xi,\eta])+[\Ps(\xi),\Ps(\eta)], 
$$ where in the last term we use the commutator of endomorphisms. The
second to fourth term in the right hand side are the covariant exterior
derivative of the $\End(\Cal T)$--valued one--form $\Ps$ with respect
to the connection induced by $\nabla^{\Cal T}$. This can be computed
by coupling $\nabla^{\Cal T}$ to the (torsion free) connection
$\hat\nabla$ on $T^*M$, differentiating the one--form $\Ps$ with this
coupled connection and then taking the alternation in the form--indices
and multiplying by two. Using all that, the fact that both $A$ and $\ps$
are symmetric in the lower indices, and formula \eqref{End-conn} for
$\nabla^{\End(\Cal T)}$, the claimed formula for the curvature follows
by a direction computation, and torsion freeness just means that the
top right entry in the resulting matrix vanishes.
\end{proof}

Observe that inserting the descriptions of $\rho\nabla_a\Rho_{bc}$
from Proposition \ref{prop4.3} into the formulae for $A_a{}^b{}_c$
and $\ps_{ac}$ from the theorem, there are some cancellations. For
example, in the case of a Levi--Civita connection, we obtain
$$
A_a{}^b{}_c|_{\partial
  M}=-\tfrac12\rho^{-1}\Rho^{bd}(\rho_a\Ph_{dc}+\rho_c\Ph_{da})|_{\partial
  M}, 
$$ where $\Ph_{ab}$ is the tensor from Proposition \ref{prop4.3}. A
similar expression holds for $\ps_{ac}$.

\subsection{Restricting to the boundary}\label{4.4} 
Over $M$, the connection $\tilde\nabla^{\Cal T}$ constructed in
Theorem \ref{4.3} is essentially uniquely determined by compatibility
with the bundle metric $L(\tau)$ and torsion freeness. (This is
closely related to the proof of existence and uniqueness of the
Levi--Civita connection in the Cartan picture. Likewise, the proof of
Theorem \ref{4.3} is closely related to the construction of the
Levi--Civita connection.) However, if we restrict to the boundary and
differentiate only in boundary directions a further normalization is
possible, and this will lead to a description of the conformal
standard tractor connection. We do not provide complete formulae in
general, but only describe how they can be obtained. The problem is
that formulae are getting quite involved without simplifying
assumptions (which is not surprising in view of the rather complicated
relation between the geometries in the interior and on the boundary).

In what follows, we have to distinguish between directions tangent to
the boundary and transversal directions, and we will adapt the
abstract index notation accordingly.  We use indices $i$, $j$, $k$,
and so on to specify boundary directions, while indices $a$, $b$, $c$,
and so on will be used for directions which are not necessarily
tangent to the boundary. A certain amount of care is needed here and
also upper and lower indices have to be distinguished. For a lower
index, it is no problem to replace a ``general'' index by a
``tangential'' one; this simply corresponds to restricting a linear
functional to a hyperplane. On the other hand, there is no canonical
extension of a functional defined on a hyperplane to the whole space,
so ``tangential'' lower indices cannot be replaced by ``general'' ones
without further choices. In contrast to this, for upper indices, a
``tangential'' index can always be considered as a general one
(corresponding to the inclusion of a hyperplane into a vector
space). One can recognize tangential upper indices by the fact that
they have trivial contraction with $\rho_a$. 

From now on, let us fix a local defining function $\rho$ for the
boundary and the corresponding connection $\hat\nabla_a$ in the
projective class. Then consider the quantity
$\ga_{ab}:=\rho\Rho_{ab}+\frac1{4\rho}\rho_a\rho_b$ which occurs in
\eqref{Lhatform}. This admits a smooth extension to the boundary, and
indeed by formula \eqref{Rhoasymp2} from the proof of Proposition
\ref{prop4.3}, we get $\ga_{ab}=\tfrac12\hat\nabla_a\rho_b+
\rho\hat\Rho_{ab}$. In particular, restricting to the boundary and
tangential directions, we can form $\ga_{ij}$, and this is a
representative of the projective second fundamental form. On the other
hand, consider the quantity $\rho^{-1}\Rho^{ab}$ which shows up in
\eqref{L-1hatform}. From the fact that the vector field $t^a$ showing
up in this proposition is smooth up to the boundary, we see that
$\rho^{-1}\Rho^{ab}\rho_b$ vanishes along $\partial M$. Hence the
restriction of $\rho^{-1}\Rho^{ab}$ to $\partial M$ is actually
tangential. Then the last equation in \eqref{split-ids} shows that on
tangential vectors, this restriction is actually inverse to
$\ga_{ij}$, so we denote it by $\ga^{ij}$.

Next, we introduce a finer decomposition of $\Cal T|_{\partial M}$,
which resembles the usual picture of conformal standard tractors in
slots. The necessary information is basically contained in Proposition
\ref{prop4.2}. In particular, we can use the transversal $t^a$ from
there to identify $T\bar M(-1)$ along $\partial M$ with $\Cal
E(1)\oplus T\partial M(-1)$ according to
\begin{equation}\label{split-def}
\nu^a\mapsto \binom{\hat\tau \nu^b\rho_b}{\nu^a-\nu^b\rho_bt^a}\qquad
\binom{\be}{\xi^i}\mapsto \xi^a+\hat\tau^{-1}\be t^a.   
\end{equation}
These are inverse to each other since by the first formula in
\eqref{split-ids}, we have $t^b\rho_b=1$ along $\partial M$.  Now we
combine this with the splitting of $\Cal T$ determined by
$\hat\nabla_a$ to identify $\Cal T$ along $\partial M$ with $\Cal
E(1)\oplus T\partial M(-1)\oplus\Cal E(-1)$, and we will use this
splitting from now on. The second formula in \eqref{split-ids} says
that $t^a\ga_{ab}=-\tfrac14\ps\rho_b$. Combining this with the formula
for $L(\tau)$ from the proof of Proposition \ref{prop4.2}, we get 
\begin{equation}\label{tract-met-split}
L(\tau)\left(\left(\begin{smallmatrix} \be_1 \\ \xi_1^i \\ \si_1
\end{smallmatrix}\right)\left(\begin{smallmatrix} \be_2 \\ \xi_2^i \\ \si_2
\end{smallmatrix}\right)\right)=\tfrac12\be_1\si_2+\tfrac12\be_2\si_1+
\hat\tau\ga_{ij}\xi_1^i\xi_2^j-\tfrac14\ps\hat\tau^{-1}\be_1\be_2. 
\end{equation}
This is slightly different from the usual splitting of conformal
standard tractors, since the line spanned by the first basis vector
(corresponding to the $\Cal E(1)$--component) is not isotropic. One
could correct this by passing to a line in $\Cal T$ which is not
contained in the subspace $\Cal E^a(-1)$ (in the splitting determined
by $\hat\nabla$), but we do not do this at this stage. 

\begin{thm}\label{thm4.4}
Consider the restriction of the linear connection $\tilde\nabla^{\Cal
  T}$ from Theorem \ref{thm4.3} to the boundary (i.e.~we differentiate
in boundary directions only). Then the curvature of this restriction
is given by restricting the two--form indices $a$ and $b$ in the
formula for the curvature in Theorem \ref{thm4.3} to tangential
directions. Moreover, in our splitting, the curvature takes the form 
$$
\begin{pmatrix}  0 & 0 & 0 \\ V_{ij}{}^k & W_{ij}{}^k{}_\ell & 0 \\ 
0 & -2\hat\tau V_{ij}{}^k\ga_{k\ell} & 0\end{pmatrix} 
$$
where $V_{ij}{}^k=V_{[ij]}{}^k$,
$W_{ij}{}^k{}_\ell=W_{[ij]}{}^k{}_\ell$ and
$W_{ij}{}^r{}_\ell\ga_{kr}=-W_{ij}{}^r{}_k\ga_{\ell r}$.  

Suppose further that $n=\dim(\partial M)\geq 3$. Then putting 
$$
\ph_{ij}:=-\tfrac1{n-2}W_{ki}{}^k{}_j+
\tfrac1{2(n-1)(n-2)}W_{kr}{}^k{}_s\ga^{rs}\ga_{ij}
$$
and defining an $\End(\Cal T)$--valued one--form $\tilde\Ps$ on $\partial M$
in our splitting by 
$$
\begin{pmatrix}  0 & 0 & 0
  \\ -\tfrac12\hat\tau^{-1}\ph_{i\ell}\ga^{k\ell} & 0 & 0 \\  
0 & \ph_{ij} & 0\end{pmatrix},
$$ the linear connection $\nabla^0$ defined by $\nabla^0_\xi
s=\tilde\nabla^{\Cal T}_\xi s+\tilde\Ps(\xi)(s)$ is the normal
conformal tractor connection on the conformal tractor bundle $\Cal
T|_{\partial M}\to\partial M$.
\end{thm}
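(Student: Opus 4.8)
The plan is to treat the two assertions separately: first to pin down the stated matrix form of the curvature using that $\tilde\nabla^{\Cal T}$ is a metric, torsion free tractor connection (Theorem \ref{thm4.3}), and then to determine the correction $\tilde\Ps$ by imposing the conformal normalization condition described in the proof of Theorem \ref{thm4.1a}.

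For the first assertion, recall that the curvature of a connection differentiated only in boundary directions is the pullback to $\partial M$ of the full curvature two--form, with the endomorphism part still acting on the whole bundle; this is exactly the restriction of the indices $a,b$ to tangential $i,j$. By Theorem \ref{thm4.3}, $\tilde\nabla^{\Cal T}$ is metric for $L(\tau)$ and torsion free, and, as for every tractor connection, its curvature annihilates the distinguished subbundle $\Cal T^1$ (the bottom slot). Skew symmetry with respect to $L(\tau)$ together with this annihilation forces the image of each $\tilde R(\partial_i,\partial_j)$ to lie in $(\Cal T^1)^\perp$: for $u\in\Cal T^1$ one has $L(\tau)(\tilde R v,u)=-L(\tau)(v,\tilde R u)=0$. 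In the three--slot splitting of \eqref{tract-met-split}, where $(\Cal T^1)^\perp=\{\be=0\}$, this says precisely that the top row and the right column vanish. Writing the remaining entries as $V_{ij}{}^k$ (the map $\Cal E(1)\to T\partial M(-1)$) and $W_{ij}{}^k{}_\ell$, a direct computation of $L(\tau)(\tilde R v,w)+L(\tau)(v,\tilde R w)=0$ in \eqref{tract-met-split} then yields both that the bottom--middle entry is $-2\hat\tau V_{ij}{}^k\ga_{k\ell}$ and the skew relation $W_{ij}{}^r{}_\ell\ga_{kr}=-W_{ij}{}^r{}_k\ga_{\ell r}$, while skew symmetry in $i,j$ is inherited from the two--form.

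For the normalization, I would first observe that $\tilde\Ps$ is strictly filtration raising and, as one checks from \eqref{tract-met-split} using $\ga_{km}\ga^{k\ell}=\delta^\ell_m$ and symmetry of $\ph_{ij}$, is skew for $L(\tau)$; here the chosen middle--top entry $-\tfrac12\hat\tau^{-1}\ph_{i\ell}\ga^{k\ell}$ is exactly what makes $\tilde\Ps$ metric, the non--isotropic term in \eqref{tract-met-split} playing no role since $\tilde\Ps$ has vanishing top output. Hence $\nabla^0=\tilde\nabla^{\Cal T}+\tilde\Ps$ is again a metric, torsion free tractor connection whose curvature annihilates $\Cal T^1$, so its curvature again has the matrix form found above, with new entries $V',W'$. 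As recalled in the proof of Theorem \ref{thm4.1a}, the normal conformal standard tractor connection is the unique such connection whose Weyl part is totally trace free, i.e.\ for which the Ricci--type contraction $W'_{ki}{}^k{}_j$ vanishes; it thus remains to show that the displayed $\ph_{ij}$ achieves this.

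The key step is to compute the change of the Weyl block under adding $\tilde\Ps$, via $\Om^0=\tilde\Om+d^{\tilde\nabla}\tilde\Ps+[\tilde\Ps,\tilde\Ps]$ and the explicit connection on $\End(\Cal T)$ in \eqref{End-conn}. The genuine derivatives of $\ph$ and the bracket $[\tilde\Ps,\tilde\Ps]$ contribute only to the lower homogeneity (Cotton--type) slot $V'$, whereas the purely algebraic terms of \eqref{End-conn} move $\ph_{ij}$ into the middle block and, after alternation in $i,j$, change $W$ by an algebraic Kulkarni--Nomizu type term, schematically $2\delta^k_{[i}\ph_{j]\ell}-2\ga^{km}\ga_{\ell[i}\ph_{j]m}$. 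Contracting over $k$ and $i$ and setting the result equal to zero gives a linear equation for $\ph_{ij}$ whose unique solution (valid since $n\geq 3$, so $(n-1)(n-2)\neq 0$) is exactly the stated formula, this being the conformal expression of the Schouten tensor in terms of the Ricci--type trace $W_{ki}{}^k{}_j$ and its further trace $W_{kr}{}^k{}_s\ga^{rs}$. Since $\nabla^0$ is then a metric, torsion free tractor connection with trace free Weyl curvature, uniqueness of the normal conformal tractor connection identifies it as the canonical one. I expect the main obstacle to be the bookkeeping in this last step: cleanly separating the algebraic contribution of \eqref{End-conn} from the derivative and bracket terms in the three--slot splitting, and tracking the trace corrections carefully enough that the coefficients $-\tfrac1{n-2}$ and $\tfrac1{2(n-1)(n-2)}$ emerge.
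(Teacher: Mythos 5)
Your strategy is the same as the paper's: read off the vanishing blocks of the restricted curvature from the explicit formula in Theorem \ref{thm4.3}, use skewness with respect to \eqref{tract-met-split} to pin down the three--slot matrix form, and then identify $\nabla^0$ with the normal connection by checking that $\tilde\Ps$ is skew for $L(\tau)$ and that the Ricci--type contraction of the induced action on $(\Cal T^1)^\perp/\Cal T^1$ vanishes. Your Kulkarni--Nomizu correction $2\delta^k_{[i}\ph_{j]\ell}-2\ga^{km}\ga_{\ell[i}\ph_{j]m}$, its trace $(n-2)\ph_{j\ell}+\ph_{kr}\ga^{kr}\ga_{j\ell}$, and the resulting formula for $\ph_{ij}$ all agree with the paper's computation. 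However, the step you defer as ``bookkeeping'' contains a genuine gap. In $\Om^0=\tilde\Om+d^{\tilde\nabla}\tilde\Ps+[\tilde\Ps,\tilde\Ps]$ the exterior covariant derivative is taken with respect to the connection on $\End(\Cal T)$ induced by $\tilde\nabla^{\Cal T}=\nabla^{\Cal T}+\Ps$, whereas \eqref{End-conn} describes the connection induced by $\nabla^{\Cal T}$. The difference produces additional purely algebraic terms $[\Ps(\xi),\tilde\Ps(\eta)]-[\Ps(\eta),\tilde\Ps(\xi)]$, and these are \emph{not} manifestly invisible on $(\Cal T^1)^\perp/\Cal T^1$: the composition $\tilde\Ps(\eta)\circ\Ps(\xi)$ lands in $\Cal T^1$ only if $\Ps(\xi)$ maps $(\Cal T^1)^\perp$ into $(\Cal T^1)^\perp$, which amounts to the boundary identity $A_a{}^b{}_c\rho_b|_{\partial M}=0$. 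This is a specific geometric fact, not index shuffling: it holds because $\rho^{-1}\Rho^{bd}\rho_d=-4\rho\, t^b$ vanishes along $\partial M$ by Proposition \ref{prop4.2}, while $\rho$ times the Christoffel--type bracket of Schouten derivatives is smooth up to the boundary by Proposition \ref{prop4.3}. This is exactly the input the paper supplies at this point; without it your identification of the change of the Weyl block with the displayed algebraic term, and hence the coefficients $-\tfrac1{n-2}$ and $\tfrac1{2(n-1)(n-2)}$, is unproven. With it, your argument closes along the same lines as the paper's.

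A secondary inaccuracy: the claim that ``as for every tractor connection, its curvature annihilates the distinguished subbundle $\Cal T^1$'' is not a valid general principle. An arbitrary tractor connection need not even preserve $\Cal T^1$, and metricity together with torsion freeness only yields preservation (vanishing of the top right entry), not annihilation (vanishing of the whole right column). What you actually need, and what the paper quotes, is that the entire last column of the curvature matrix in Theorem \ref{thm4.3} vanishes, which that theorem's explicit formula provides. With that substitution your derivation of the matrix form, the entry $-2\hat\tau V_{ij}{}^k\ga_{k\ell}$, and the skew relation for $W_{ij}{}^k{}_\ell$ is correct. The same caution applies when you assert that the curvature of $\nabla^0$ ``again has the matrix form found above'': that the correction terms still annihilate $\Cal T^1$ is not automatic from general principles, but must be extracted from the computation (it uses the symmetry of $\ph_{ij}$ and, again, the boundary identity above).
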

\begin{proof}
The facts that $\tilde\nabla^{\Cal T}$ can be restricted to the
boundary and that the curvature is obtained by restriction follows as
in the proof of Theorem \ref{thm4.1a}. Writing the resulting curvature
in a matrix according to our splitting, we see from the formula in
Theorem \ref{thm4.3} that the last column has to consist of zeros
only. Moreover, since $\tilde\nabla^{\Cal T}$ is metric for $L(\tau)$
all the values of its curvature are skew symmetric with respect to
$L(\tau)$. Knowing that there are some zero blocks already, the
claimed form of the curvature is established by a simple direct
computation using formula \eqref{tract-met-split}.

For the second part of the proof, recall that for $n\geq 3$ it follows
from the general theory (see \cite{tractors} and \cite{confamb}) that
the canonical tractor connection on a conformal standard tractor
bundle is characterized by the fact that it is metric and its
curvature is normal. As described in the proof of Theorem
\ref{thm4.1a}, normality first requires that the curvature preserves
the canonical line subbundle $\Cal T^1$. If this is satisfied, one
obtains a tensor field describing the induced action of the curvature
on $(\Cal T^1)^\perp/\Cal T^1$. (For the tractor connection
$\tilde\nabla$ this is described by the component $W_{ij}{}^k{}_\ell$
from above.) The second part of the normality condition is that the
Ricci--type contraction of this component vanishes identically.

Now by Theorem \ref{thm4.3}, the tractor connection
$\tilde\nabla^{\Cal T}$ is metric for $L(\tau)$. From the definition
of $\tilde\Ps$, one easily verifies that for any vector field
$\xi$, the endomorphism $\tilde\Ps(\xi)$ is skew symmetric for
$L(\tau)$ and this immediately implies that $\nabla^0$ is a tractor
connection which is metric for $L(\tau)$. Hence to complete the proof,
it suffices to prove that the Ricci--type contraction of the tensor
describing the induced action of the curvature of $\nabla^0$ on
$(\Cal T^1)^\perp/\Cal T^1$ vanishes identically. 

Now as in the proof of Theorem \ref{thm4.3}, the curvature $R^0$ of
$\nabla^0$ is related to the curvature $\tilde R$ of
$\tilde\nabla^{\Cal T}$ by 
\begin{equation}\label{R01}
R^0(\xi,\eta)=\tilde R(\xi,\eta)+\tilde\nabla^{\End(\Cal
  T)}_\xi\tilde\Ps(\eta)-\tilde\nabla^{\End(\Cal
  T)}_\eta\tilde\Ps(\xi)-\tilde\Ps([\xi,\eta])+
[\tilde\Ps(\xi),\tilde\Ps(\eta)], 
\end{equation}
where $\tilde\nabla^{\End(\Cal T)}$ is the connection on $\End(\Cal
T)$ induced by $\tilde\nabla^{\Cal T}$ and the last bracket denotes
the commutator of endomorphisms. Now the fact that $\tilde\Ps$ is
concentrated in the block--lower--triangular part of the matrix says
that inserting any vector field into $\tilde\Ps$ one obtains a map
which vanishes on $\Cal T^1$, and maps $(\Cal T^1)^\perp$ to $\Cal
T^1$ (and also maps $\Cal T$ to $(\Cal T^1)^\perp$). This shows that
the last two terms in the right hand side of \eqref{R01} do not
contribute to the induced map on $(\Cal T^1)^\perp/\Cal T^1$ (and it
also implies that the last one always vanishes).

Next, it is a standard result on induced connections that the
definition of $\tilde\nabla^{\Cal T}$ in terms of $\nabla^{\Cal T}$
and the $\End(\Cal T)$--valued one--form $\Ps$ in Theorem \ref{thm4.3}
implies that
\begin{equation}\label{End-derivs}
\tilde\nabla^{\End(\Cal T)}_\xi\tilde\Ps(\eta)=\nabla^{\End(\Cal
  T)}_\xi\tilde\Ps(\eta)+[\Ps(\xi),\tilde\Ps(\eta)]. 
\end{equation}
Now we have already seen above, that $\tilde\Ps(\eta)$ maps $(\Cal
T^1)^\perp$ to $\Cal T^1$ and from the definition of $\Ps$ it follows
that $\Ps(\xi)$ vanishes on $\Cal T^1$. Thus, the composition
$\Ps(\xi)\o\tilde\Ps(\eta)$ does not contribute to the induced map on
$(\Cal T^1)^\perp/\Cal T^1$. To consider the composition
$\tilde\Ps(\eta)\o\Ps(\xi)$, it suffices to consider the image of
$\Ps(\xi)$ up to elements of $\Cal T^1$, which is described by the
tensor $A_a{}^b{}_c$ from Theorem \ref{thm4.3}. As we have noted in
the proof of that theorem $A_a{}^b{}_c\rho_b$ vanishes along the
boundary. This shows that $\Ps(\xi)$ has values in $(\Cal T^1)^\perp$,
so $\tilde\Ps(\eta)\o\Ps(\xi)$ has values in $\Cal T^1$ and does not
contribute to the action on $(\Cal T^1)^\perp/\Cal T^1$ either.

Collecting the information, we see that the difference
$R^0(\xi,\eta)-\tilde R(\xi,\eta)$ is given by 
$$
\nabla^{\End(\Cal T)}_\xi\tilde\Ps(\eta)-\nabla^{\End(\Cal
  T)}_\eta\tilde\Ps(\xi). 
$$
The first summand in this expression maps $s\in\Ga(\Cal T)$ to
$$
\nabla^{\Cal T}_\xi(\tilde\Ps(\eta)(s))-\tilde\Ps(\eta)(\nabla^{\Cal
  T}_\xi s).
$$ Now we can directly compute the induced action of this on $(\Cal
T^1)^\perp/\Cal T^1$ by applying this to an element of the form
$s=(0,\nu^\ell,0)$ and computing the middle slot of the result. We do
this in abstract index notation with the index $i$ corresponding to
$\xi$ and $j$ corresponding to $\eta$. For the first term, applying
$\tilde\Ps$ the result is $\ph_{j\ell}\nu^{\ell}$ in the bottom slot,
and zero in the two other slots, so differentiating by $\nabla_i^{\Cal
  T}$ according to \eqref{std-conn}, this produces
$\delta^k_i\ph_{j\ell}\nu^{\ell}$ in the middle slot. The middle slot
in the second term is given (including the sign) by multiplying
$\frac12\hat\tau^{-1}\ph_{jr}\ga^{kr}$ by the top slot of the
derivative in the bracket. By \eqref{split-ids}, the latter is given
by
$$
\hat\tau\rho_b\nabla_i\nu^b=-\hat\tau\nu^b\nabla_i\rho_b=
-\hat\tau 2\ga_{i\ell}\nu^\ell. 
$$
Collecting our results, we see that the tensor describing the action
of the curvature of $\nabla^0$ on $(\Cal T^1)^\perp/\Cal T^1$ is given
by
$$
W_{ij}{}^k{}_\ell+\delta^k_i\ph_{j\ell}-\delta^k_j\ph_{i\ell}-
\ph_{jr}\ga^{kr}\ga_{i\ell}+\ph_{ir}\ga^{kr}\ga_{j\ell}. 
$$
Forming the Ricci--type contraction, we get 
$$
W_{kj}{}^k{}_\ell+(n-2)\ph_{jl}+\ph_{kr}\ga^{kr}\ga_{j\ell},  
$$ 
and inserting the definition of $\ph_{j\ell}$ one immediately
verifies that this vanishes.
\end{proof}

\end{document}